\newtheorem{theo}{Theorem}[section]
\newtheorem{definition}[theo]{Definition}
\newenvironment{defi}{\begin{definition}\rm}{\end{definition}}
\newtheorem{remarque}[theo]{Remark}
\newenvironment{remark}{\begin{remarque}\rm}{\end{remarque}}
\newtheorem{exemple}[theo]{Example}
\newenvironment{ex}{\begin{exemple}\rm}{\end{exemple}}
\newtheorem{lemma}[theo]{Lemma}
\newtheorem{coro}[theo]{Corollary}
\newtheorem{nota}[theo]{Notation}
\newenvironment{notation}{\begin{nota}\rm}{\end{nota}}
\newtheorem{prf1}{\it {Idea of the proof}}
\newtheorem{prf}{\it{Proof}}
\newenvironment{demo}{\begin{prf}\rm}{\hfill$\Box$\end{prf}}
\newtheorem*{theorem*}{Theorem}
\def\varLim@#1#2{%
\vtop{\m@th\ialign{##\cr
\hfil$#1\operator@font Lim$\hfil\cr
\noalign{\nointerlineskip\kern1.5\ex@}#2\cr
\noalign{\nointerlineskip\kern-\ex@}\cr}}%
}
\def\varinjLim{%
\mathop{\mathpalette\varLim@{\rightarrowfill@\textstyle}}\nmlimits@
}
\def\moverlay{\mathpalette\mov@rlay}
\def\mov@rlay#1#2{\leavevmode\vtop{%
		\baselineskip\z@skip \lineskiplimit-\maxdimen
		\ialign{\hfil$\m@th#1##$\hfil\cr#2\crcr}}}
\newcommand{\charfusion}[3][\mathord]{
	#1{\ifx#1\mathop\vphantom{#2}\fi
		\mathpalette\mov@rlay{#2\cr#3}
	}
	\ifx#1\mathop\expandafter\displaylimits\fi}
\newcommand{\bigcupdot}{\charfusion[\mathop]{\bigcup}{\cdot}}
\def\N{\mathbb{ N}}
\def\Z{\mathbb{ Z}}
\title{About algebraic Puiseux series in several variables.} 
\author{Michel Hickel and Micka\"{e}l Matusinski}
\subjclass[2010]{13J05, 13F25, 14J99  and 12Y99}
\keywords{multivariate polynomials, algebraic power series, implicitization, closed form for coefficients}
\begin{document}

\begin{abstract}
 We deal with the algebraicity of an iterated Puiseux series in several variables in terms of the properties of its coefficients. Our aim is to generalize to several variables the results from \cite{hickel-matu:puiseux-alg}. We show that the algebraicity of such a series for given bounded degrees is determined by a finite number of explicit universal polynomial formulas. Conversely, given a vanishing polynomial, there is a closed-form formula for the coefficients of the series in terms of the coefficients of the polynomial and of a bounded initial part of the series. 
\end{abstract}

\maketitle

\section{Introduction.}
Let $K$ be a field of characteristic zero and $\overline{K}$ its algebraic closure. Let $\underline{x}:=(x_1,\ldots,x_r)$ be an $r$-tuple of indeterminates where $r\geq 2$. Let $K[\underline{x}]$ and $K[[\underline{x}]]$ denote repectively the domain of polynomials and of formal power series in $r$ variables with coefficients in $K$, and their fraction fields $K(\underline{x})$ and $K((\underline{x}))$. Both fields embed naturally into\\ $K((x_r))((x_{r-1}))\cdots((x_1))$.
By iteration of the classical Newton-Puiseux theorem (see e.g. \cite[Theorem 3.1]{walker_alg-curves} and \cite[p. 314, Proposition]{rib-vdd_ratio-funct-field}), one can derive a description of an algebraic closure of $K((x_r))((x_{r-1}))\cdots((x_1))$ in terms of iterated fractional power series (see \cite[Theorem 3]{rayner_puiseux-multivar}\cite[p.151]{sathaye:newt-puiseux-exp_abh-moh-semigr}):
\begin{theorem*}
The following field, where $L$ ranges over the finite extensions of $K$ in $\overline{K}$: $$\mathcal{L}_r:= \displaystyle\varinjlim_{p\in\mathbb{N}^*} \displaystyle\varinjlim_L L((x_r^{1/p}))((x_{r-1}^{1/p}))\cdots ((x_1^{1/p}))$$  is the algebraic closure of $K((x_r))((x_{r-1}))\cdots((x_1))$. 
\end{theorem*}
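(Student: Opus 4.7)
The plan is to prove both inclusions $\mathcal{L}_r \subseteq \overline{F_r}$ and $\overline{F_r} \subseteq \mathcal{L}_r$, where I abbreviate $F_r := K((x_r))\cdots((x_1))$. The inclusion $\mathcal{L}_r \subseteq \overline{F_r}$ is the routine direction: for any finite extension $L/K$ and any integer $p \geq 1$, the field $L((x_r^{1/p}))\cdots((x_1^{1/p}))$ is a finite extension of $F_r$ of degree at most $[L:K]\cdot p^r$, since each $x_i^{1/p}$ satisfies $T^p - x_i \in F_r[T]$, and the constant-field extension $L/K$ contributes a further factor $[L:K]$ (a $K$-basis of $L$ remains a basis at every iterated Laurent series level). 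Hence $\mathcal{L}_r$, a directed union of such finite extensions, is algebraic over $F_r$.

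The substantive direction, $\overline{F_r} \subseteq \mathcal{L}_r$, I would prove by induction on $r$. The base case $r=1$ is the classical Newton--Puiseux theorem in its refined form (see e.g. \cite{walker_alg-curves}): $\overline{K((x_1))} = \varinjlim_p \varinjlim_L L((x_1^{1/p}))$, with $L$ ranging over finite extensions of $K$ inside $\overline{K}$. For the inductive step, I would regard $F_r = F_{r-1}((x_1))$ as a one-variable Laurent series field over the characteristic-zero field $F_{r-1}$ and apply the refined one-variable Newton--Puiseux theorem with base $F_{r-1}$. This produces, for any $\xi \in \overline{F_r}$, an integer $p \in \N^*$ and a finite extension $M$ of $F_{r-1}$ inside $\overline{F_{r-1}}$ with $\xi \in M((x_1^{1/p}))$.

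The induction hypothesis identifies $\overline{F_{r-1}}$ with $\mathcal{L}_{r-1}$, so the finitely many generators $\alpha_1,\ldots,\alpha_k$ of $M/F_{r-1}$ live, individually, in fields $L_i((x_r^{1/q_i}))\cdots((x_2^{1/q_i}))$. Since the family defining $\mathcal{L}_{r-1}$ is directed (take the compositum of the $L_i$ and the least common multiple of the $q_i$), I can find a single finite extension $L/K$ and a single $q \in \N^*$ such that $M \subseteq L((x_r^{1/q}))\cdots((x_2^{1/q}))$. Hence
\[
\xi \in L((x_r^{1/q}))\cdots((x_2^{1/q}))((x_1^{1/p})),
\]
and setting $p' := \mathrm{lcm}(p,q)$ embeds the right-hand side into $L((x_r^{1/p'}))\cdots((x_1^{1/p'})) \subseteq \mathcal{L}_r$, closing the induction.

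The step I expect to be the main obstacle is the invocation of the one-variable Newton--Puiseux theorem in its \emph{refined} form over a general characteristic-zero base field $E$: namely, $\overline{E((x))} = \bigcup_p \bigcup_N N((x^{1/p}))$ with $N$ ranging over \emph{finite} extensions of $E$. Equivalently, for any $\xi$ algebraic over $E((x))$, all coefficients of its Puiseux expansion share a common finite extension of $E$ (a fact ultimately traceable to the splitting field of the minimal polynomial being a finite extension of $E((x))$). This refined control is exactly what legitimises the gluing of the $(L_i, q_i)$ into a single pair $(L, q)$ above; without it, one would only place $\xi$ inside $\overline{F_{r-1}}((x_1^{1/p}))$, which is strictly larger than $\mathcal{L}_r$.
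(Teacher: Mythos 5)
Your proof is correct and follows exactly the route the paper indicates: the paper does not actually prove this theorem but cites it (Rayner, Sathaye) as obtained ``by iteration of the classical Newton--Puiseux theorem'', and your induction on $r$, with the refined one-variable statement over an arbitrary characteristic-zero base as the engine at each stage, is precisely that iteration. You also correctly isolate the one genuinely delicate point --- that the Puiseux coefficients of an element algebraic over $E((x))$ lie in a common \emph{finite} extension of $E$ --- which is exactly the content of the Ribenboim--van den Dries proposition the paper cites.
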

Within this framework, there are several results concerning those iterated fractional power series which are solutions  of polynomial equations with coefficients either in $K(\underline{x})$ or $K((\underline{x}))$. More precisely, the authors provide necessary constraints on the supports of such series (see \cite[Theorem 3.16]{mcdonald_puiseux-multivar}, \cite[ Th\'eor\`eme 2]{gonzalez-perez_singul-quasi-ord}, \cite[Theorem 13]{soto-vicente:polyhedral-cones}  \cite[Theorem 1]{aroca-ilardi:puiseux-multivar}, \cite[Theorem 1]{soto-vicente_puiseux-multivar}). One can deduce from these results that:
\begin{theorem*}
The following field, where $L$ ranges over the finite extensions of $K$ in $\overline{K}$:
$$\mathcal{K}_r\ :=\ \displaystyle \varinjlim_{(p,\underline{q})\in \mathbb{N}^*\times\mathbb{N}^{r-1}} \displaystyle\varinjlim_L\ \ 
L\left(\left(\,\left( \displaystyle\frac{x_1}{x_2^{q_1}}\right)^{1/p},\ldots,  \left( \displaystyle\frac{x_{r-1}}{x_r^{q_{r-1}}}\right)^{1/p} ,x_r^{1/p}\right)\right)$$
is an algebraically closed extension of $K(\underline{x})$ and $K((\underline{x}))$ in $\mathcal{L}_r$.
\end{theorem*}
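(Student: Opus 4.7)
The plan is to establish three claims: (a) $\mathcal{K}_r$ is well-defined as a subfield of $\mathcal{L}_r$; (b) it contains both $K(\underline{x})$ and $K((\underline{x}))$; and (c) it is algebraically closed. For (a), one verifies that for each triple $(p,\underline{q},L)$, the assignment $u_i := (x_i/x_{i+1}^{q_i})^{1/p}$ for $i<r$ and $u_r := x_r^{1/p}$ extends to an injective morphism from $L((u_1,\ldots,u_r))$ into $L((x_r^{1/p}))\cdots ((x_1^{1/p}))\subseteq \mathcal{L}_r$. The key fact is that the exponent vectors of $u_1,\ldots,u_r$ in $\mathbb{Q}^r$ form a basis whose matrix is lower-triangular with $1$'s on the diagonal, hence the cone they generate is strongly convex; this ensures that any formal sum supported on a translate of this cone has a well-defined image among the iterated Laurent series. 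The direct limits are then taken along compatible inclusions obtained by refining $p$, enlarging $\underline{q}$, and enlarging $L$.

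For (b), the inclusion $K((\underline{x}))\subseteq \mathcal{K}_r$ is immediate upon taking $p=1$ and $\underline{q} = 0$. For $K(\underline{x})$, it suffices to expand $1/Q$ for each non-zero polynomial $Q\in K[\underline{x}]$. One chooses $\underline{q}$ large enough so that $Q$ factors, in the $u$-variables, as a monomial $u^{\alpha}$ times a formal power series with non-zero constant term; this latter factor is a unit in $L[[u_1,\ldots,u_r]]$, so $1/Q$ has an expansion in $L((u_1,\ldots,u_r))\subseteq \mathcal{K}_r$.

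For (c), let $P(T)\in \mathcal{K}_r[T]$ be non-constant. Its finitely many coefficients lie in some common $L((u_1,\ldots,u_r))$ for appropriate $(p,\underline{q},L)$. By the first theorem, any root $\xi$ of $P$ lies in $\mathcal{L}_r$. The support theorems cited in the introduction, applied in the $u$-coordinates, ensure that (up to a monomial translation) the support of $\xi$ lies in a strongly convex rational polyhedral cone $\sigma$. One then shows that $\sigma$ is contained in the cone generated by monomials of the form $(x_i/x_{i+1}^{q'_i})^{1/p'}$ for suitable refined parameters $(p',\underline{q}')$, after passing to a finite extension $L'/L$; this yields $\xi \in L'((u'_1,\ldots,u'_r))\subseteq\mathcal{K}_r$.

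The main obstacle will be the last combinatorial step in (c): showing that the family of rational cones generated by the $r$-tuples of monomials $(x_i/x_{i+1}^{q_i})^{1/p}$ is cofinal among the strongly convex rational polyhedral cones that arise as supports of algebraic Puiseux series. This reduces to an argument in the geometry of cones and lattices in $\mathbb{Q}^r$, exploiting the lower-triangular structure of the exponent matrix to construct, for any given strongly convex rational cone $\sigma$, an enlargement of the required form.
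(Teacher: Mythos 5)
The paper does not actually prove this statement: it is presented in the introduction as a deduction from the cited support theorems (McDonald, Gonz\'alez P\'erez, Soto--Vicente, Aroca--Ilardi), and the only fragments of an argument in the text are Lemma \ref{lemme:monomialisation} and Lemma \ref{lemme:Kr}. Measured against that, your steps (a) and (b) are sound and match the paper's machinery; note only that the separate treatment of $K(\underline{x})$ in (b) is redundant, since $K(\underline{x})\subset K((\underline{x}))=\mathrm{Frac}\,K[[\underline{x}]]$ and the latter is literally the field in the direct system with $p=1$, $\underline{q}=\underline{0}$ (the monomialization argument is what proves Lemma \ref{lemme:Kr}, not what proves containment). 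Your step (c) also follows the intended route: invoke the support theorems in the $\underline{u}$-coordinates and then show the resulting cone sits inside one of the prescribed simplicial cones.

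The genuine gap is that you defer exactly the step that carries all the content beyond the citations: the cofinality of the cones $\sigma_{p,\underline{q}}:=\mathrm{cone}\bigl(\tfrac{1}{p}(e_1-q_1e_2),\ldots,\tfrac{1}{p}(e_{r-1}-q_{r-1}e_r),\tfrac{1}{p}e_r\bigr)$. This is provable, and you should prove it. First, the fact that every nonzero element of $\sigma$ is $\geq_{\mathrm{lex}}\underline{0}$ comes not from the support theorems alone but from $\xi\in\mathcal{L}_r$ (a translate of a cone containing a lex-negative ray cannot have well-ordered support). Given that, write each $v\in\tfrac{1}{p'}\mathbb{Z}^r$ in the triangular coordinates $a_1=v_1$, $a_2=v_2+q_1v_1$, $a_3=v_3+q_2a_2$, \dots; for each of the finitely many generators $v^{(1)},\ldots,v^{(s)}$ of $\sigma$, lex-positivity lets you choose $q_1,\ldots,q_{r-1}$ inductively so large that all these coordinates are nonnegative (choosing each $a_j$ strictly positive when some earlier coordinate is, so that the next step can absorb a negative $v_{j+1}$); hence $\sigma\subseteq\sigma_{p',\underline{q}}$. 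Two further points must then be made explicit. Containment of cones is not yet containment of series rings: you also need that the generators of $\sigma_{p',\underline{q}}$ form a $\mathbb{Z}$-basis of $\tfrac{1}{p'}\mathbb{Z}^r$ (the exponent matrix is triangular unimodular), so that every lattice point of the cone is an $\mathbb{N}$-combination of the generators and the support condition really yields $\xi\in\underline{u}'^{\,\underline{n}}L'[[\underline{u}'_1,\ldots,\underline{u}'_r]]$. And the passage from $\underline{u}$-exponents back to $\underline{x}$-exponents cannot be handled by simply composing substitutions, because the composite of two substitutions of the prescribed bidiagonal form $(x_i/x_{i+1}^{q_i})^{1/p}$ is in general not of that form (it acquires nonzero exponents on $x_{i+2}$); one must instead transport the cone through the triangular, lex-order-preserving linear map and apply the cofinality lemma in $\underline{x}$-exponent space. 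This same point is silently used at the end of the proof of Lemma \ref{lemme:Kr}. Until this lemma is written out, the proposal is a plan rather than a proof.
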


In fact, $\mathcal{K}_r$ can be described as:
$$ \displaystyle\varinjlim_{(p,\underline{q})\in \mathbb{N}^*\times\mathbb{N}^{r-1}}  \displaystyle\varinjlim_L
 \displaystyle\varinjlim_{\underline{n}\in \mathbb{Z}^r}\ \  \left(\frac{x_1}{x_2^{q_1}}\right)^{n_1/p}\cdots \left(\frac{x_{r-1}}{x_r^{q_{r-1}}}\right)^{n_{r-1}/p} x_r^{n_r/p}.L\left[\left[\left(\frac{x_1}{x_2^{q_1}}\right)^{1/p},\ldots, \left(\frac{x_{r-1}}{x_r^{q_{r-1}}}\right)^{1/p} ,x_r^{1/p}\right]\right],$$
(see for instance Lemma \ref{lemme:Kr}).

Let us call the elements of $\mathcal{K}_r$ \emph{rational polyhedral Puiseux series}  (since the support with respect to the variables $x_i$'s of such a series is included in the translation of some rational convex polyhedral cone). We are interested in those rational polyhedral Puiseux series that are algebraic, say the rational polyhedral Puiseux series which verify a polynomial equation $P(\underline{x},y)=0$ with coefficients which are themselves polynomials in $\underline{x}$:  $P(\underline{x},y)\in K[\underline{x}][y]\setminus\{0\}$. More precisely, we resume our previous work on algebraic Puiseux series in one variable \cite{hickel-matu:puiseux-alg}, by dealing with the following analogous questions:\\
\noindent$\bullet$ \textbf{Reconstruction of a vanishing polynomial for a given algebraic rational polyhedral Puiseux series.} The algebraicity of a rational polyhedral Puiseux series can be encoded by the vanishing of certain determinants derived from the coefficients of the series. These determinants are a straightforward generalization of what we called the Wilczynski polynomials in the one-variable case in \cite{hickel-matu:puiseux-alg}. We extend this approach by showing how to reconstruct the coefficients of a vanishing polynomial by means of some  of these (generalized) Wylczynski polynomials (see Section \ref{section:Wilc}). More precisely, we show that, for  given bounded degrees, there are finitely many universal polynomial formulas allowing to check the algebraicity of a series and to perform this reconstruction (see Theorem \ref{theo:wilc}). The results of this Section \ref{section:Wilc} hold for $K$ of arbitrary characteristic.\\
\noindent$\bullet$ \textbf{Description of the  coefficients of an algebraic rational polyhedral Puiseux series in terms of the coefficients of a vanishing polynomial.} For $y_0$ a  rational polyhedral Puiseux series solution of a given nonzero polynomial equation $P(\underline{x},y)=0$, our aim consists in determining a closed-form expression of the coefficients of $y_0$ in terms of the coefficients of $P$ and the coefficients of an initial part $z_{\underline{k}}$ of $y_0$ of  controled length $\underline{k}$.  In this direction, we prove a singular generalization of the multivariate version (see \cite[Theorems 3.5 and 3.6]{sokal:implicit-function}) of Flajolet-Soria formula \cite{flajolet-soria:coeff-alg-series} to the case of a series satisfying a strongly reduced Henselian equation in the sense of Section \ref{section:flajo-soria}. Then, we show  that the remaining part $y_0-z_{\underline{k}}$ of $y_0$ satisfies a strongly reduced Henselian equation canonically derived from $P$ (Theorem \ref{theo:FS}), and deduce the closed form expression (Corollary \ref{coro:FS}).

As a corollary of the latter result and of Theorem \ref{theo:wilc}, we obtain that, for given bounded degrees, there is a finite number of universal families of rational fractions such that, for any such $y_0$, the coefficients of the remaining part of $y_0-z_{\underline{k}}$ can be computed as the evaluation of such a family at the coefficients of $z_{\underline{k}}$ (Corollary \ref{coro:param-ratio}). As a direct consequence, we derive a proof of the multivariate version of Eisenstein Theorem due to K. V. Safonov \cite[Theorem 5]{safonov:algebraic-power-series} (see Corollary \ref{coro:eisenstein}).


\section{Preliminaries}\label{section:preliminaries} 

Let us denote $\mathbb{N}:=\mathbb{Z}_{\geq 0}$ and $\mathbb{N}^*:=\mathbb{N}\setminus\{0\}=\mathbb{Z}_{>0}$. 
For any set $\mathcal{E}$, we  write $|\mathcal{E}|:=\mathrm{Card}(\mathcal{E})$. We denote systematically the vectors as underlined letters, e.g. $\underline{x}:=(x_1,\ldots,x_r)$, $\underline{n}:=(n_1,\ldots,n_r)$, and in particular $\underline{0}:=(0,\ldots,0)$. Moreover, $\underline{x}^{\underline{n}}:=x_1^{n_1}\cdots x_r^{n_r}$. The floor function will be written $\lfloor q \rfloor$ for $q\in\mathbb{Q}$.

\begin{notation}\label{nota:FS}
For any vector of nonnegative  integers $\underline{M}=\displaystyle\left(m_{\underline{i},j}\right)_{\underline{i},j}$ and any vector of scalars $\underline{A}=\displaystyle\left(a_{\underline{i},j}\right)_{\underline{i},j}$ indexed by finitely many $\underline{i}\in\mathbb{N}^r$ and $j\in\mathbb{N}$, we set:
\begin{itemize}
\item $\underline{M}!:=\displaystyle\prod_{\underline{i},j}m_{\underline{i},j}!$;
\item $\underline{A}^{\underline{M}}:=\displaystyle\prod_{\underline{i},j}a_{\underline{i},j}^{m_{\underline{i},j}}$;
\item  $\ |\underline{M}|:=\displaystyle\sum_{\underline{i},j}m_{\underline{i},j}$, $\ ||\underline{M}||:= \displaystyle\sum_{\underline{i},j}m_{\underline{i},j}\, j$ and $\ G(\underline{M}) := \displaystyle\sum_{\underline{i},j}m_{\underline{i},j}\, \underline{i}$.
\end{itemize}
In the case where $\underline{k}=(k_0,\ldots,k_l)$, we set 
$  \|\underline{k}\| :=\displaystyle\sum_{j=0}^l{k_j}\,j$. In the case where $\underline{k}=(k_{\underline{i}})_{\underline{i}\in \Delta}$ where $\Delta$ is a finite subset of $\mathbb{N}^r$, we set $G(\underline{k}):=\displaystyle\sum_{\underline{i}\in \Delta}{k_i}\,\underline{i}$.
\end{notation}

We will consider the following orders on tuples in $\mathbb{Z}^r$:
\begin{description}
\item[The lexicographic order] $\underline{n} \leq_{\textrm{lex}} \underline{m}$ $:\Leftrightarrow$ $n_1<m_1$ or $(n_1=m_1\ \textrm{ and } n_2<m_2)$ or $\cdots$ or $(n_1=m_1,\ n_2=m_2,\ \ldots \ \textrm{ and } n_r<m_r)$. 
\item[The graded lexicographic order] $\underline{n} \leq_{\textrm{grlex}} \underline{m}$ $:\Leftrightarrow$ $|\underline{n }|<|\underline{m}|$ or $(|\underline{n }|=|\underline{m}|\ \textrm{ and } \underline{n} \leq_{\textrm{lex}} \underline{m})$.
\item[The product (partial) order] $\underline{n} \leq \underline{m}$  $:\Leftrightarrow$ $n_1\leq m_1 \ \textrm{ and } n_2\leq m_2\ \cdots \ \textrm{ and } n_r\leq m_r$.
\end{description}
 Note that we will apply also the lexicographic order on $\mathbb{Q}^r$. Similarly, one has the \textbf{anti-lexicographic order} denoted by $\leq_{\textrm{alex}}$.\\

To view the fields $K(\underline{x})$ and $K((\underline{x}))$ as embedded into $K((x_r))((x_{r-1}))\cdots((x_1))$ means that the rational fractions or formal meromorphic fractions can be represented as iterated formal Laurent series, i.e. Laurent series in $x_1$ whose coefficients are Laurent series in $x_2$, whose coefficients... etc. This corresponds to the following approach. As in \cite{rayner_puiseux-multivar, sathaye:newt-puiseux-exp_abh-moh-semigr}, we identify $K((x_r))((x_{r-1}))\cdots((x_1))$ with the field of generalized power series (in the sense of \cite{hahn:nichtarchim}; see also \cite{rib:series-fields-alg-closed})  with coefficients in $K$ and exponents in $\mathbb{Z}^r$ ordered lexicographically, usually denoted by $K\left(\left(X^{\mathbb{Z}^r}\right)\right)^{\mathrm{lex}}$. By definition, such a generalized series is a formal expression $s=\displaystyle\sum_{\underline{n}\in \mathbb{Z}^r}c_{\underline{n}}X^{\underline{n}}$ (say a map $\mathbb{Z}^r\rightarrow K$) whose support $\textrm{Supp}(s):=\{\underline{n}\in \mathbb{Z}^r\ |\ c_{\underline{n}}\neq 0\}$ is well-ordered. The field $K\left(\left(X^{\mathbb{Z}^r}\right)\right)^{\mathrm{lex}}$ comes naturally equipped with the following valuation of rank $r$:
$$\begin{array}{lccl}v:&K\left(\left(X^{\mathbb{Z}^r}\right)\right)^{\mathrm{lex}}&\rightarrow&(\mathbb{Z}^r\cup\{\infty\},\leq_{\textrm{lex}})\\
&s\neq 0 &\mapsto& \min(\textrm{support}(s))\\
&0&\mapsto & \infty
\end{array}$$
The identification of $K\left(\left(X^{\mathbb{Z}^r}\right)\right)$ and $K((x_r))((x_{r-1}))\cdots((x_1))$ reduces to the identification $$X^{(1,0,\ldots,0)}=x_1\ ,\ \ X^{(0,1,\ldots,0)}=x_2 \ ,\ \ \ldots\ ,\  \  X^{(0,\ldots,0,1)}=x_r.$$
Note also that this corresponds to the fact that the power series in the rings $K[\underline{x}]$ and $K[[\underline{x}]]$ are viewed as expanded along $(\mathbb{Z}^r,\leq_{\textrm{lex}})$. \\
Similarly, the field $\mathcal{L}_r$ is a union of fields of generalized series $L\left(\left(X^{(\mathbb{Z}^r)/p}\right)\right)^{\mathrm{lex}}$ and comes naturally equipped with the valuation of rank $r$:
$$\begin{array}{lccl}v:&\mathcal{L}_r&\rightarrow&(\mathbb{Q}^r\cup\{\infty\},\leq_{\textrm{lex}})\\
&s\neq 0&\mapsto& \min(\textrm{support}(s))\\
&0&\mapsto & \infty.
\end{array}$$

We will need another representation of the elements in $K(\underline{x})$ and $K((\underline{x}))$, via the embedding of these fields into the field $K\left(\left(X^{\mathbb{Z}^r}\right)\right)^{\mathrm{grlex}}$ with valuation:
$$\begin{array}{lccl}w:&K\left(\left(X^{\mathbb{Z}^r}\right)\right)^{\mathrm{grlex}}&\rightarrow&(\mathbb{Z}^r\cup\{\infty\},\leq_{\textrm{grlex}})\\
&s\neq 0&\mapsto& \min(\textrm{support}(s))\\
&0&\mapsto & \infty.
\end{array}$$
and the same identification:
$$X^{(1,0,\ldots,0)}=x_1\ ,\ \ X^{(0,1,\ldots,0)}=x_2 \ ,\ \ \ldots\ ,\  \  X^{(0,\ldots,0,1)}=x_r.$$
For a polynomial $P(y)=\displaystyle\sum_{j=0}^da_jY^j\in K\left(\left(X^{\mathbb{Z}^r}\right)\right)^{\mathrm{grlex}}[y]$, we denote:
$$ w(P(y)):=\min_{j=0,\ldots,d}\{w(a_j)\}.$$

We will also use the following notations to keep track of the variables used to write the monomials. Given a ring $R$, we denote by $ R((x_1^{\mathbb{Z}},\ldots,x_r^\mathbb{Z}))^{\textrm{lex}}$ and $ R((x_1^{\mathbb{Z}},\ldots,x_r^\mathbb{Z}))^{\textrm{grlex}}$ the corresponding rings of generalized series $\displaystyle\sum_{\underline{n}\in \mathbb{Z}^r}c_{\underline{n}} \underline{x}^{\underline{n}}$ with coefficients $c_{\underline{n}}$ in $R$. Accordingly, let us write $ R((x_1^{\mathbb{Z}},\ldots,x_r^\mathbb{Z}))^{\textrm{lex}}_{\textrm{Mod}}$ and $ R((x_1^{\mathbb{Z}},\ldots,x_r^\mathbb{Z}))^{\textrm{grlex}}_{\textrm{Mod}}$ the subrings of series whose actual exponents are all bounded by below by some constant for the product order. Note that these subrings are both isomorphic to the ring $\displaystyle\bigcup_{\underline{n}\in\mathbb{Z}^r}\underline{x}^{\underline{n}}R[[\underline{x}]] $.
 Let us write also $R((x_1^{\mathbb{Z}},\ldots,x_r^\mathbb{Z}))^{\textrm{lex}}_{\geq_{\textrm{lex}}\underline{0}}$ and $R((x_1^{\mathbb{Z}},\ldots,x_r^\mathbb{Z}))^{\textrm{grlex}}_{\geq_{\textrm{grlex}}\underline{0}}$ the subrings of series $y$ with $v(y)\geq_{\textrm{lex}}\underline{0}$, respectively $w(y)\geq_{\textrm{grlex}}\underline{0}$. 

\begin{lemma}[Monomialization Lemma]\label{lemme:monomialisation}
Let $f$ be non zero in $K[[u_1,\ldots,u_r]]$. There exists $s_1,\ldots,s_{r-1}\in \mathbb{N}$ such that, if we set
\begin{equation}\label{equ:transfo}
\left\{\begin{array}{lcl}
v_1&:=&\displaystyle\frac{u_1}{u_2^{s_1}}\\
&\vdots&\\
v_{r-1}&:=&\displaystyle\frac{u_{r-1}}{u_{r}^{s_{r-1}}}\\
v_r&:=&u_r
\end{array}\right.
\end{equation} 
then $f(u_1,\ldots,u_r)=\underline{v}^{\underline{\alpha}}g(v_1,\ldots,v_r)$ where $\underline{\alpha}\in\mathbb{N}^r$ and $g$ is an invertible element of $K[[v_1,\ldots,v_r]]$.
\end{lemma}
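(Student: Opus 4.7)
Let $S := \mathrm{Supp}(f) \subset \mathbb{N}^r$, which is nonempty. Since $\mathbb{N}^r$ is well-ordered by $\leq_{\mathrm{lex}}$, the set $S$ admits a lex-minimum $\underline{n}^* = (m_1^*,\ldots,m_r^*)$, characterised by the nested minimizations $m_j^* = \min\{n_j : \underline{n} \in S, \ n_i = m_i^* \text{ for all } i < j\}$. Inverting \eqref{equ:transfo} recursively yields $u_r = v_r$ and $u_i = v_i v_{i+1}^{s_i} v_{i+2}^{s_i s_{i+1}} \cdots v_r^{s_i s_{i+1} \cdots s_{r-1}}$, so that $\underline{u}^{\underline{n}} = \underline{v}^{\phi(\underline{n})}$ with
\[
\phi(\underline{n}) = (p_1(\underline{n}), \ldots, p_r(\underline{n})), \qquad p_j(\underline{n}) := \sum_{i=1}^{j} s_i s_{i+1} \cdots s_{j-1} \, n_i ,
\]
obeying the recursion $p_1(\underline{n}) = n_1$ and $p_{j+1}(\underline{n}) = n_{j+1} + s_j p_j(\underline{n})$. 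The linear map $\phi$ is lower triangular with $1$'s on the diagonal, hence injective on $\mathbb{Z}^r$, and it sends $\mathbb{N}^r$ into $\mathbb{N}^r$, so the substituted series is well-defined in $K[[\underline{v}]]$. Writing $\underline{\alpha} := \phi(\underline{n}^*) \in \mathbb{N}^r$, the problem reduces to choosing the $s_j$'s so that $\phi(\underline{n}) \geq \underline{\alpha}$ componentwise for every $\underline{n} \in S$.

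\textbf{Key inductive step.} I claim this holds as soon as one picks $s_j > m_{j+1}^*$ for $j = 1, \ldots, r-1$. Set $S_k^* := \{\underline{n} \in S : n_i = m_i^* \text{ for } i \leq k\}$, so $S_r^* = \{\underline{n}^*\}$. I show by induction on $k$ that $p_k(\underline{n}) \geq p_k(\underline{n}^*)$ for all $\underline{n} \in S$, with equality iff $\underline{n} \in S_k^*$. The case $k=1$ is $p_1(\underline{n}) = n_1 \geq m_1^*$. For the inductive step, the recursion gives
\[
p_{k+1}(\underline{n}) - p_{k+1}(\underline{n}^*) = (n_{k+1} - m_{k+1}^*) + s_k \bigl(p_k(\underline{n}) - p_k(\underline{n}^*)\bigr) .
\]
If $\underline{n} \in S_k^*$, the second summand vanishes by induction and the sign is controlled by $n_{k+1} - m_{k+1}^* \geq 0$, with equality iff $\underline{n} \in S_{k+1}^*$. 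If $\underline{n} \notin S_k^*$, the induction hypothesis gives the strict integer inequality $p_k(\underline{n}) - p_k(\underline{n}^*) \geq 1$, so the right-hand side is at least $-m_{k+1}^* + s_k > 0$ by the choice of $s_k$.

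\textbf{Conclusion.} After substitution $f = \underline{v}^{\underline{\alpha}} g(\underline{v})$ where $g(\underline{v}) = \sum_{\underline{n} \in S} c_{\underline{n}} \underline{v}^{\phi(\underline{n}) - \underline{\alpha}} \in K[[\underline{v}]]$ and $g(\underline{0}) = c_{\underline{n}^*} \neq 0$, hence $g$ is a unit. The only genuine difficulty in the argument is the inductive step: one has to exploit the integrality jump of $p_k$ outside $S_k^*$ to absorb a potentially negative contribution $n_{k+1} - m_{k+1}^*$ in the next coordinate, which is exactly what the bound $s_k > m_{k+1}^*$ encodes.
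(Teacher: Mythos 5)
Your proof is correct, but it takes a genuinely different route from the paper's. The paper argues iteratively, one variable at a time: it factors $f=\underline{u}^{\underline{\beta}}h$ with $\underline{\beta}=v(f)$ the lex-valuation, splits $h$ into the part free of $u_1$ and the part divisible by $u_1$, chooses $s_1$ large enough ($s_1\geq\sup\{1,(1-m_2)/m_1\}$ over the support of the second part) so that after substitution that part acquires positive lex-valuation in the remaining variables, and then repeats for $u_2,\ldots,u_{r-1}$; the bookkeeping is carried out in auxiliary rings of generalized series such as $K[[v_1,\ldots,v_p]]((u_{p+1}^{\mathbb{Z}},\ldots,u_r^{\mathbb{Z}}))_{\geq_{\mathrm{lex}}\underline{0},\mathrm{Mod}}$. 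You instead work entirely on the exponent lattice: you write out the substitution as an explicit unipotent lower-triangular map $\phi$ on exponents, choose all the $s_j$ simultaneously from the lex-minimal exponent $\underline{n}^*$ of $f$ (namely $s_j>m_{j+1}^*$), and prove by a coordinate-by-coordinate induction that $\phi(\mathrm{Supp}\,f)\geq\phi(\underline{n}^*)$ for the product order, injectivity of $\phi$ then giving both well-definedness of the substitution and $g(\underline{0})=c_{\underline{n}^*}\neq 0$. What your approach buys is the elimination of the auxiliary generalized-series rings, a fully explicit choice of the $s_j$ depending only on $v(f)$, and an isolation of the one real mechanism (the integrality jump $p_k(\underline{n})-p_k(\underline{n}^*)\geq 1$ off $S_k^*$ absorbing the worst case $n_{k+1}=0$); what the paper's version buys is compatibility with the ring-theoretic framework it reuses immediately afterwards in Lemma \ref{lemme:Kr}. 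Numerically the two choices essentially coincide, since on the support of $h_1$ one has $(1-m_2)/m_1\leq 1+\beta_2$, matching your $s_1\geq m_2^*+1$.
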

\begin{demo}

Let us write $f=\underline{u}^{\underline{\beta}}\,h$ where $\underline{\beta}=v(f)$ and  $v(h)=\underline{0}$ (where $v$ is the lexicographic valuation with respect to the variables $\underline{u}$). Note that $h$ can be written as $h=h_0+h_1$ where $h_0\in K((u_2,\ldots,u_r))_{\geq_{\textrm{lex}}\underline{0},\textrm{Mod}}$ with $v(h_0)=\underline{0}$, and $h_1\in u_1K[[u_1]]((u_2^{\mathbb{Z}},\ldots,u_r^\mathbb{Z}))_{\textrm{Mod}}$. Let $s_1$ be a positive integer such that:
$$ s_1\geq \sup\{1\ ;\ (1-m_2)/m_1,\ \underline{m}\in\textrm{supp}\ h_1\}.$$
Let $v_1:=u_1/u_2^{s_1}$. For every monomial in $h_1$, one has $u_1^{m_1}u_2^{m_2}\ldots u_r^{m_r}=v_1^{m_1}u_2^{m_2+s_1m_1}\ldots u_r^{m_r}$. Hence, $m_2+s_1m_1\geq 1$ by definition of $s_1$. So $(m_2+s_1m_1,\ldots,m_r)>_{\textrm{lex}}0$, meaning that $h_1\in K[[v_1]]((u_2^{\mathbb{Z}},\ldots,u_r^\mathbb{Z}))_{\geq_{\textrm{lex}}\underline{0},\textrm{Mod}}$ and $v(h_1)>_{\textrm{lex}}\underline{0}$ (where $v$ is now the lexicographic valuation with respect to the variables $(v_1,u_2,\ldots,u_r)$). So $h\in K[[v_1]]((u_2^{\mathbb{Z}},\ldots,u_r^\mathbb{Z}))_{\geq_{\textrm{lex}}\underline{0},\textrm{Mod}}$ and $v(h)=\underline{0}$. \\
Suppose now that we have obtained $h\in  K[[v_1,\ldots,v_p]]((u_{p+1}^{\mathbb{Z}},\ldots,u_r^\mathbb{Z}))_{\geq_{\textrm{lex}}\underline{0},\textrm{Mod}}$ and $v(h)=\underline{0}$ (where $v$ is now the lexicographic valuation with respect to the variables\\
 $(v_1,\ldots,v_p,u_{p+1},\ldots,u_r)$). As before, there exists a positive integer $s_{p+1}$ such that, if we set $v_{p+1}:=u_{p+1}/u_{p+2}^{s_{p+1}}$, then $h\in K[[v_1,\ldots,v_{p+1}]]((u_{p+2}^{\mathbb{Z}},\ldots,u_r^\mathbb{Z}))_{\geq_{\textrm{lex}}\underline{0},\textrm{Mod}}$ and $v(h)=\underline{0}$ (where $v$ is now the lexicographic valuation with respect to the variables $(v_1,\ldots,v_{p+1},u_{p+2},\ldots,u_r)$).\\
By iteration of this process, we obtain $h \in K[[v_1,\ldots,v_{r-1}]]((u_r^\mathbb{Z}))_{\geq_{\textrm{lex}}\underline{0},\textrm{Mod}}$  and $v(h)=\underline{0}$ (where $v$ is now the lexicographic valuation with respect to the variables $(v_1,\ldots,v_{r-1}, u_r)$), which means that $h\in K[[v_1,\ldots,v_{r-1},u_r]]$ with $h$ invertible. Since $\underline{u}^{\underline{\beta}}=\underline{v}^{\underline{\alpha}}$ for some $\underline{\alpha}\in\N^r$, the lemma follows.
\end{demo}

We will use the following particular representation of $\mathcal{K}_r$.
\begin{lemma}\label{lemme:Kr}
$ \ \ \ \ \mathcal{K}_r=\\
 \displaystyle\varinjlim_{(p,\underline{q})\in \mathbb{N}^*\times\mathbb{N}^{r-1}}  \displaystyle\varinjlim_L
 \displaystyle\varinjlim_{\underline{n}\in \mathbb{Z}^r}\ \  \left(\frac{x_1}{x_2^{q_1}}\right)^{n_1/p}\cdots \left(\frac{x_{r-1}}{x_r^{q_{r-1}}}\right)^{n_{r-1}/p} x_r^{n_r/p}.L\left[\left[\left(\frac{x_1}{x_2^{q_1}}\right)^{1/p},\ldots, \left(\frac{x_{r-1}}{x_r^{q_{r-1}}}\right)^{1/p} ,x_r^{1/p}\right]\right]^*$\\

\noindent where $L\left[\left[\displaystyle\left(\frac{x_1}{x_2^{q_1}}\right)^{1/p},\ldots, \left(\displaystyle\frac{x_{r-1}}{x_r^{q_{r-1}}}\right)^{1/p} ,x_r^{1/p}\right]\right]^*$ denotes the group of invertible elements in\\ $L\left[\left[\left(\displaystyle\frac{x_1}{x_2^{q_1}}\right)^{1/p},\ldots, \left(\displaystyle\frac{x_{r-1}}{x_r^{q_{r-1}}}\right)^{1/p} ,x_r^{1/p}\right]\right]$.
\end{lemma}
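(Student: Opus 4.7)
The plan is to use the Monomialization Lemma (Lemma \ref{lemme:monomialisation}) as the central tool, after reducing any element of $\mathcal{K}_r$ to a ratio of two formal power series and then translating to an adapted coordinate system. The inclusion from right to left will be immediate from the definition of $\mathcal{K}_r$, so I focus on the nontrivial inclusion.

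First I would start with a nonzero $z \in \mathcal{K}_r$; by definition $z \in L((y_1,\ldots,y_r))$ for some $(p,\underline{q},L)$, where $y_i := (x_i/x_{i+1}^{q_i})^{1/p}$ for $i<r$ and $y_r := x_r^{1/p}$. Interpreting $L((y_1,\ldots,y_r))$ as the fraction field of $L[[y_1,\ldots,y_r]]$, I would write $z=A/B$ with $A,B \in L[[y_1,\ldots,y_r]]$, $B\neq 0$. Then I would apply the Monomialization Lemma to $A$ and $B$ simultaneously with a common tuple $(s_1,\ldots,s_{r-1})\in\mathbb{N}^{r-1}$ — taking the pointwise maximum of the tuples produced separately, which is still valid by monotonicity of the conditions in the lemma's proof. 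This yields $A=\underline{v}^{\underline{\alpha}}g_A$ and $B=\underline{v}^{\underline{\beta}}g_B$ with $g_A,g_B \in L[[\underline{v}]]^*$, where $v_i := y_i/y_{i+1}^{s_i}$ for $i<r$ and $v_r := y_r$. Consequently $z = \underline{v}^{\underline{\alpha}-\underline{\beta}}\,(g_A/g_B)$ with $g_A/g_B \in L[[\underline{v}]]^*$.

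Next I would introduce new parameters $q_i' := q_i+s_i$ for $i<r$, new variables $y_i' := (x_i/x_{i+1}^{q_i'})^{1/p}$, and show by a direct computation in $x$-coordinates that
\[
v_i \;=\; y_i' \cdot \prod_{k=2}^{r-i}\,(y_{i+k}')^{s_i\, q_{i+1}\,q_{i+2}'\cdots q_{i+k-1}'}\quad (i<r), \qquad v_r = y_r',
\]
all exponents being non-negative integers. This would imply that the substitution $v_i \mapsto (\text{monomial in }\underline{y}')$ defines an injective ring homomorphism $L[[\underline{v}]]\hookrightarrow L[[\underline{y}']]$ which preserves the constant term (hence sends units to units) and sends Laurent monomials in $\underline{v}$ to Laurent monomials in $\underline{y}'$. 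Pushing the expression for $z$ through this substitution would give $z = \underline{y'}^{\underline{n}}\cdot h$ with $\underline{n}\in\mathbb{Z}^r$ and $h\in L[[\underline{y}']]^*$, exhibiting $z$ in the right-hand side for the new parameters $(p,\underline{q}',L,\underline{n})$.

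The step I expect to be the main obstacle is the verification of the explicit monomial formula for $v_i$ in terms of the $y_j'$'s: one has to track the exponent of every $x_k$ carefully and check that the contributions from the substitutions $y_j = v_j\,y_{j+1}^{s_j}$ telescope into non-negative integer powers of $y_{i+2}',\ldots,y_r'$. The non-negativity of these exponents is crucial, because it is exactly what ensures that the substitution maps the formal power series ring $L[[\underline{v}]]$ into $L[[\underline{y}']]$ (and not merely into some larger Laurent-type ring), and therefore that $g_A/g_B$ is still a unit after the change of coordinates.
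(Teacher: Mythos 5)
Your proof is correct and follows essentially the same route as the paper: write the element as a quotient of two power series in the variables $\underline{u}$, monomialize numerator and denominator simultaneously via Lemma \ref{lemme:monomialisation}, and observe that the composite change of variables is again of the allowed form with new parameters $q_i'=q_i+s_i$. The one step to tighten is your claim that the pointwise maximum of the two tuples produced by \emph{separate} runs of the Monomialization Lemma works for both series: the construction in that lemma is iterative, and the admissible $s_{i+1}$ depends on the support of the series already transformed by $s_1,\ldots,s_i$, so the two runs are not independent and their outputs cannot simply be merged after the fact. The fix is either to carry out the induction in parallel, choosing each $s_i$ large enough for both series at once, or, as the paper does, to apply the lemma once to the product $AB$ and use that $L[[\underline{v}]]$ is a local UFD (the $v_i$ being primes) to split the resulting monomial-times-unit factorization between the two factors. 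Your explicit verification that each $v_i$ is a monomial with non-negative integer exponents in the new variables $\underline{y}'$, with unipotent triangular exponent matrix, so that the substitution defines a unit-preserving embedding $L[[\underline{v}]]\hookrightarrow L[[\underline{y}']]$, is a correct and welcome elaboration of a step the paper only asserts.
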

\begin{demo}
Let $y_0\in \mathcal{K}_r$. There exist $(p,\underline{q})\in \mathbb{N}^*\times\mathbb{N}^{r-1}$ and $L$ with $[L:K]<+\infty$ such that $y_0\in L\left(\left(\left(\displaystyle\frac{x_1}{x_2^{q_1}}\right)^{1/p},\ldots, \left(\displaystyle\frac{x_{r-1}}{x_r^{q_{r-1}}}\right)^{1/p} ,x_r^{1/p}\right)\right)$. Let us denote $\underline{u}=(u_1,\ldots,u_r):=\left(\left(\displaystyle\frac{x_1}{x_2^{q_1}}\right)^{1/p},\ldots, \left(\displaystyle\frac{x_{r-1}}{x_r^{q_{r-1}}}\right)^{1/p} ,x_r^{1/p}\right)$. So $y_0=\displaystyle\frac{f}{g}$ for some $f,g\in L[[\underline{u}]]$. 
By the preceding lemma, we can monomialize the product $f.g$, so $f$ and $g$ simultaneously, by a suitable transformation (\ref{equ:transfo}). Note that this transformation maps $L\left(\left(\left(\displaystyle\frac{x_1}{x_2^{q_1}}\right)^{1/p},\ldots, \left(\displaystyle\frac{x_{r-1}}{x_r^{q_{r-1}}}\right)^{1/p} ,x_r^{1/p}\right)\right)$ into some $L\left(\left(\left(\displaystyle\frac{x_1}{x_2^{t_1}}\right)^{1/p},\ldots, \left(\displaystyle\frac{x_{r-1}}{x_r^{t_{r-1}}}\right)^{1/p} ,x_r^{1/p}\right)\right)$.
\end{demo}

Let  $\tilde{y}_0\in \mathcal{K}_r$ be a non zero rational polyhedral Puiseux series. By Lemma \ref{lemme:Kr} there are $(p,\underline{q})\in \mathbb{N}^*\times\mathbb{N}^{r-1}$ such that, if we set:
\begin{equation}\label{equ:eclt1} \left(u_1,\ldots,u_{r-1},u_r\right):=\left(\left(\frac{x_1}{x_2^{q_1}}\right)^{1/p},\ldots, \left(\frac{x_{r-1}}{x_r^{q_{r-1}}}\right)^{1/p} ,x_r^{1/p}\right),\end{equation}
 then we can rewrite $\tilde{y}_0 =\displaystyle\sum_{\underline{n}\geq \underline{n}^0} \tilde{c}_{\underline{n}}\underline{u}^{\underline{n}},\ \tilde{c}_{\underline{n}^0}\neq 0$. Let us denote $c_{\underline{n}}:=\tilde{c}_{\underline{n}+\underline{n}^0-(0,\ldots,0,1)}$, and: 
  $$\tilde{y}_0=\underline{u}^{\underline{n}^0} u_r^{-1}\displaystyle\sum_{\underline{n}\geq (0,\ldots,0,1)} c_{\underline{n}}\underline{u}^{\underline{n}}=\underline{u}^{\underline{n}^0} u_r^{-1}y_0\ \ \textrm{with}\  c_{(0,\ldots,0,1)}\neq 0.$$ 
By the change of variable (\ref{equ:eclt1}), we have:
$$ x_k=u_k^pu_{k+1}^{pq_{k}}u_{k+2}^{pq_{k}q_{k+1}}\cdots u_{r}^{pq_{k}q_{k+1}\cdots q_{r-1}},\ \ \ \ \ \ \ \  k=1,\ldots,r$$
The series $\tilde{y}_0$ is a root of a polynomial $\tilde{P}(\underline{x},y)=\displaystyle\sum_{\underline{i},j} \tilde{a}_{\underline{i},j}\underline{x}^{\underline{i}}y^j$ of degree $d_y$ in $y$ if and only if the series $y_0=\displaystyle\sum_{\underline{n}\geq (0,\ldots,0,1)} c_{\underline{n}}\underline{u}^{\underline{n}}$ is a root of  $$\underline{u}^{\underline{m}}\tilde{P}\left(  u_1^pu_{2}^{pq_{1}}\cdots u_{r}^{pq_{1}q_{2}\cdots q_{r-1}}\    ,\ \ldots\ ,\ u_r^{p} ,\ \underline{u}^{\underline{n}^0}u_r^{-1}y\right),$$
 the latter being a polynomial for $\underline{m}$ such that 
\begin{equation}\label{equ:m}
m_k=\max\left\{0\, ;\, -n_k^0d_y\right\},\ k=1,\ldots,r-1,\ \ \ \textrm{ and }\ \ \  m_r=\max\left\{0\, ;\, \left(1-n_r^0\right)d_y\right\}.\end{equation}
The existence of a nonzero polynomial $\tilde{P}$ cancelling $\tilde{y}_0$ is equivalent to the one of a polynomial $P(\underline{u},y)=\displaystyle\sum_{\underline{i},j} a_{\underline{i},j}\underline{u}^{\underline{i}}y^j$ cancelling $y_0$, but  with constraints on the support of $P$. Let us make these constraints explicit in the case $r=2$:
$$\begin{array}{lcl}
P(u_1,u_2,y)&=&u_1^{m_1}u_2^{m_2}\tilde{P}\left(  u_1^pu_{2}^{pq_{1}},\ u_2^{p} ,\  u_1^{n_1^0}u_2^{n_2^0-1}y\right)\\
&=& \displaystyle\sum_{\underline{i},j} \tilde{a}_{\underline{i},j}u_1^{pi_1+jn_1^0+m_1}u_2^{p(i_2+q_1i_1)+j(n_2^0-1)+m_2}y^j\\
&=&\displaystyle\sum_{\underline{k},j}a_{\underline{k},j}u_1^{k_1}u_2^{k_2}y^j
\end{array}$$
The necessary conditions for $(\underline{k},j)$ to belong to the support of $P$ are:\\

$(k_1,k_2)=$\\

\noindent$\left\{\begin{array}{cl}
\left(jn_1^0 \mod p\ ,\ q_1k_1+j(n_2^0-1-q_1n_1^0)\mod p\right)  & \textrm{ if } n_1^0\geq 0\textrm{ and } n_2^0\geq1\\
\left(jn_1^0 \mod p\ ,\ q_1k_1+j(n_2^0-1-q_1n_1^0)-d_y(n_2^0-1)\mod p\right)  & \textrm{ if } n_1^0\geq 0\textrm{ and } n_2^0<1\\
\left((j-d_y)n_1^0 \mod p\ ,\ q_1k_1+j(n_2^0-1)-(j-d_y)q_1n_1^0\mod p\right)  & \textrm{ if } n_1^0< 0\textrm{ and } n_2^0\geq1\\
\left((j-d_y)n_1^0 \mod p\ ,\ q_1k_1+(j-d_y)(n_2^0-1-q_1n_1^0)\mod p\right) & \textrm{ if } n_1^0<0\textrm{ and } n_2^0<1
\end{array}\right.$\\

In the general case with $r$ variables, we claim that one can derive similar constraints on the support of a vanishing polynomial $P$ for $y_0$, depending only on $d_y$, $p,q_1,\ldots,q_{r-1}$ and $\underline{n}^0$. The algebraicity of $\tilde{y}_0$ is equivalent to that of $y_0$ but \textit{ with such constraints} on the support of the vanishing polynomial. This leads us to the following definition:

\begin{defi}\label{defi:alg-relative}
Let $\mathcal{F}$ and $\mathcal{G}$ be two strictly increasing finite sequences of pairs $(\underline{i},j)\in\left(\mathbb{N}^r\times\mathbb{N}\right)$ ordered anti-lexicographically: 
$$(\underline{i}_1,j_1) \leq_{\textrm{alex}} (\underline{i}_2,j_2)\Leftrightarrow  j_1 < j_2\textrm{ or } (j_1 = j_2\ \textrm{and}\ \underline{i}_1 \leq_{\textrm{grlex}} \underline{i}_2).$$
We suppose additionally that  $\mathcal{F}\geq_{\textrm{alex}} \left(\underline{0},1\right)>_{\textrm{alex}}\mathcal{G}>_{\textrm{alex}}\left(\underline{0},0\right)$ (thus the elements of $\mathcal{G}$ are  ordered pairs of the form $(\underline{i},0)$, $|\underline{i}|> 0$, and those of  $\mathcal{F}$ are of the form  $(\underline{i},j),\ j\geq 1$). We say that a series $y_0=\displaystyle\sum_{\underline{n}\geq_{\mathrm{grlex}} (0,\ldots,0,1)} c_{\underline{n}}\underline{x}^{\underline{n}}\in K[[\underline{x}]]$, $c_{(0,\ldots,0,1)}\neq 0$, is \textbf{algebraic relatively to $(\mathcal{F},\mathcal{G})$} if there exists a polynomial $P(\underline{x},y)=\displaystyle\sum_{(\underline{i},j)\in\mathcal{F}\cup\mathcal{G}} a_{\underline{i},j}\underline{x}^{\underline{i}}y^j\in K[\underline{x},y]\setminus\{0\}$ such that $P(\underline{x},y_0)=0$.
\end{defi}

\begin{ex}\label{ex:eclt}
For $r=2$, let us consider the following general equation of degrees $d_x=1$ in $\underline{x}$ and $d_y=2$ in $y$: 
$$\tilde{P}=\tilde{a}_{0,0,0}+\tilde{a}_{0,1,0}x_2+\tilde{a}_{1,0,0}x_1+\left(\tilde{a}_{0,0,1}+\tilde{a}_{0,1,1}x_2+\tilde{a}_{1,0,1}x_1\right)y+ \left(\tilde{a}_{0,0,2}+\tilde{a}_{0,1,2}x_2+\tilde{a}_{1,0,2}x_1\right)y^2.$$
 For instance, if $\tilde{a}_{0,0,0}=\tilde{a}_{0,0,1}=0$ and $\tilde{a}_{0,1,0}.\tilde{a}_{0,0,2}\neq 0$  then one can expand the two  solutions of this equation in ${x_2}^{1/2}\cdot L\left[\left[\left(\displaystyle\frac{x_1}{x_2}\right)^{1/2}, {x_2}^{1/2}\right]\right]^*$ for $L=K\left[\sqrt{-\tilde{a}_{0,1,0}/\tilde{a}_{0,0,2}}\right]$. With $\tilde{c}_{0,1}=\sqrt{-\tilde{a}_{0,1,0}/\tilde{a}_{0,0,2}}$, the solutions are:
$$\tilde{y}_0=\tilde{c}_{0,1}{x_2}^{1/2}+\cdots\ \textrm{ and } \ \tilde{\tilde{y}}_0=-\tilde{c}_{0,1}{x_2}^{1/2}+\cdots.$$
Note that for both series we have $\underline{n}^0=(0,1)$ and therefore $\underline{m}=(0,0)$ for $\underline{m}$ as in (\ref{equ:m}).
By application of the following change of variables:
$$ \left(u_1,u_2\right):=\left(\left(\displaystyle\frac{x_1}{x_2}\right)^{1/2},{x_2}^{1/2}\right)\ \Leftrightarrow\ \left(x_1,x_2\right)=\left({u_1}^2{u_2}^2\, , {u_2}^2\right),$$
we derive from $\tilde{P}$ the following equation:
$$P=a_{0,2,0}{u_2}^2+a_{2,2,0}{u_1}^2{u_2}^2+\left(a_{0,2,1}{u_2}^2+ a_{2,2,1}{u_1}^2{u_2}^2\right)y+ \left(a_{0,0,2}+a_{0,2,2}{u_2}^2+a_{2,2,2}{u_1}^2{u_2}^2\right)y^2.$$
that has its solutions in $u_2\cdot L[[u_1,u_2]]^*$, with $c_{0,1}=\tilde{c}_{0,1}$:
$$y_0=c_{0,1}u_2+\cdots\ \textrm{ and } \ \hat{y}_0=-c_{0,1}u_2+\cdots.$$
 The corresponding sets $\mathcal{F}$ and $\mathcal{G}$ that contain the support of $P$ are:
$$\mathcal{F}=\left\{(0,2,1),(2,2,1),(0,0,2),(0,2,2),(2,2,2)\right\}\ \textrm{ and }\ \mathcal{G}=\left\{(0,2,0),(2,2,0)\right\}.$$
\end{ex}

We will also need the following arithmetical lemma in Section \ref{section:hensel} and  at the end of Section \ref{section:flajo-soria}:

\begin{lemma}\label{lemma:arithm}
Let $m\in\mathbb{N}^*$ and $\underline{k}=(k_0,\ldots,k_d)\in\mathbb{N}^{d+1}$ for $d\in\mathbb{N}^*$ such that $|\underline{k}|=m$ and $\|\underline{k}\|=m-1$. Then:
$$\displaystyle\frac{1}{m}\cdot\displaystyle\frac{m!}{\underline{k}!}\in \mathbb{N}.$$
\end{lemma}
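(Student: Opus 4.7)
The plan is to show the divisibility $m \mid \binom{m}{k_0,\ldots,k_d} = \frac{m!}{\underline{k}!}$ by exhibiting a free action of $\mathbb{Z}/m\mathbb{Z}$ on a natural combinatorial set of that cardinality. Let $S$ denote the set of sequences $(c_1,\ldots,c_m)\in\{0,1,\ldots,d\}^m$ in which the value $j$ appears exactly $k_j$ times (for every $j=0,\ldots,d$). Standard multinomial counting gives $|S| = \frac{m!}{k_0!\cdots k_d!}$, and by construction $\sum_{i=1}^m c_i = \|\underline{k}\| = m-1$.

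Next I would let the cyclic group $\mathbb{Z}/m\mathbb{Z}$ act on $S$ by index rotation $(c_1,\ldots,c_m)\mapsto(c_2,\ldots,c_m,c_1)$. The key point is that this action is free. Indeed, suppose some sequence $(c_1,\ldots,c_m)\in S$ has period $r$ for a proper divisor $r$ of $m$; write $m=r\ell$ with $\ell>1$. Then periodicity yields $\sum_{i=1}^m c_i = \ell\sum_{i=1}^r c_i$, so $\ell$ divides $m-1 = r\ell-1$. Since $\ell\mid r\ell$, this forces $\ell\mid 1$, contradicting $\ell>1$.

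Because the action is free, every $\mathbb{Z}/m\mathbb{Z}$-orbit in $S$ has exactly $m$ elements, so $m$ divides $|S| = \frac{m!}{\underline{k}!}$. Equivalently, $\frac{1}{m}\cdot\frac{m!}{\underline{k}!}\in\mathbb{N}$, as claimed.

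The only potentially delicate step is the freeness argument, but as shown it reduces to the elementary fact that $\ell\mid r\ell-1$ implies $\ell=1$; no cycle lemma or deeper combinatorics is needed. (Should a purely arithmetic proof be preferred, one could alternatively verify the required $p$-adic inequality on $\frac{(m-1)!}{k_0!\cdots k_d!}$ via Legendre's formula, but the orbit argument above is noticeably cleaner.)
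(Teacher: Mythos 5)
Your proof is correct, and it takes a genuinely different route from the paper. The paper argues arithmetically: it rewrites $\frac{1}{m}\cdot\frac{m!}{\underline{k}!}$ as $\frac{(m-1)!}{\underline{k}!}$ and shows $\nu_p$ of this quantity is nonnegative for every prime $p$ via Legendre's formula, with a case analysis on the remainder of $m-1$ modulo $p^i$; the hypothesis $\|\underline{k}\|=m-1$ enters only in the delicate case where $p^i$ divides $m$, to rule out $p^i$ dividing every $k_j$. Your orbit-counting argument replaces all of this with the observation that the cyclic rotation action on the set of arrangements is free because any proper period $\ell>1$ would have to divide both $m$ and $\sum_i c_i=\|\underline{k}\|=m-1$. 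Each step of yours checks out: $|S|=m!/\underline{k}!$, the action preserves $S$, a point fixed by a nontrivial rotation has minimal period $r=\gcd(s,m)<m$, and the resulting $\ell=m/r$ divides $\gcd(m,m-1)=1$. Your approach is shorter and more conceptual (it is the cycle-lemma mechanism in its simplest form, and visibly only needs $\gcd(m,\|\underline{k}\|)=1$ rather than $\|\underline{k}\|=m-1$); the paper's $p$-adic computation is more elementary in the sense of using no combinatorial model, and exhibits the integer directly as $\frac{(m-1)!}{\underline{k}!}$.
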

\begin{demo}
For any prime number $p$, $\nu_p$ denotes the $p$-adic valuation on $\mathbb{Q}$. Let us show that for any $p$:
$$\nu_p\left(\displaystyle\frac{1}{m}\cdot\displaystyle\frac{m!}{\underline{k}!}\right)= \nu_p\left(\displaystyle\frac{(m-1)!}{\underline{k}!}\right)\geq 0.$$
By a classical result of A.-M. Legendre \cite{legendre:theorie-nbres} or \cite[Lemma 4]{singmaster:binomial-multinomial}, for every $n\in\mathbb{N}$ and any prime $p$, one has:
$$ \nu_p(n!)=\displaystyle\sum_{i\geq 1} \left\lfloor \displaystyle\frac{n}{p^i}\right\rfloor.$$
For any prime $p$ and $i\in\mathbb{N}^*$, let  us write the Euclidian divisions:
$$\left\{\begin{array}{ccll}
m-1&=&q_i\cdot p^i+r_i, & 0\leq r_i< p^i;\\
k_j&=& q_{i,j}\cdot p^i+r_{i,j},& 0\leq r_{i,j}< p^i;\\
\displaystyle\sum_{j=0,\ldots,d}r_{i,j}&=& \kappa_i \cdot p^i + \rho_i,& 0\leq \rho_i < p^i.
\end{array}\right.$$
One has that:
$$\begin{array}{lcl}
 \nu_p\left(\displaystyle\frac{(m-1)!}{\underline{k}!}\right)&=& \nu_p((m-1)!)-\displaystyle\nu_p(\underline{k}!)\\
 &=& \displaystyle\sum_{i\geq 1} \left( \left\lfloor \displaystyle\frac{m-1}{p^i}\right\rfloor-\displaystyle\sum_{j=0,\ldots,d}\left\lfloor \displaystyle\frac{k_j}{p^i}\right\rfloor\right)\\
 &=& \displaystyle\sum_{i\geq 1} \left(q_i-\displaystyle\sum_{j=0,\ldots,d}q_{i,j}\right).
\end{array}$$
For any $p$ and $i$, there are two cases. If $r_i< p^i-1$, then:
$$ \left\lfloor \displaystyle\frac{m}{p^i}\right\rfloor=q_i.$$
Since $\displaystyle\sum_{j=0,\ldots,d}k_j=m$, we obtain:
$$ \displaystyle\sum_{j=0,\ldots,d}q_{i,j}+\kappa_i=\left\lfloor \displaystyle\frac{m}{p^i}\right\rfloor=q_i.$$
Hence, $q_i\geq \displaystyle\sum_{j=0,\ldots,d}q_{i,j}$.\\
If $r_i= p^i-1$, then $m=(q_i+1)p^i$. So:
$$\left(\displaystyle\sum_{j=0,\ldots,d}q_{i,j}+\kappa_i\right)p^i+\rho_i=(q_i+1)p^i.$$
Therefore, $\rho_i=0$ and $\displaystyle\sum_{j=0,\ldots,d}q_{i,j}+\kappa_i=q_i+1$. Either $\kappa_i\geq 1$, so $\displaystyle\sum_{j=0,\ldots,d}q_{i,j}\leq q_i$. Or $\kappa_i=0$, so each $r_{i,j}=0$: $p^i$ divides $k_j$ for any $j$. But this would imply that $p^i$ divides $m-1=\displaystyle\sum_{j=0,\ldots,d}j\,k_j$: a contradiction with the fact that $m$ and $m-1$ are coprime.\\
We obtain that $q_i\geq \displaystyle\sum_{j=0,\ldots,d}q_{i,j}$ for any $i$ and $p$, which gives the desired result.
\end{demo}

\section{Characterizing the algebraicity of a formal multivariate power series}\label{section:Wilc}

Here we resume and extend to the multivariate case the remarks from  \cite{wilczynski:alg-power-series}. Note that in the present section, the field $K$ can be of any  characteristic. 

The purpose of the following discussion is to translate the vanishing of a polynomial $P$ at a formal series $y_0$ in terms of the vanishing of minors of an infinite matrix. As we have seen in the previous section, one can always assume that $y_0=\displaystyle\sum_{\underline{n}\in \mathbb{N}^r} c_{\underline{n}}\underline{u}^{\underline{n}}$ is such that $c_{\underline{0}}=0$, $c_{(0,\ldots,0,1)}\neq 0$. (In fact we could even assume that  $\underline{n}\geq (0,\ldots,0,1)$ but we will not use this restriction).\\

Let us consider a series $Y_0=\displaystyle\sum_{\underline{n}\geq_{\mathrm{grlex}} (0,\ldots,0,1)} C_{\underline{n}}x^{\underline{n}}\in K[(C_{\underline{n}})_{\underline{n}\in\mathbb{N}^r}][[\underline{x}]]$ where $\underline{x}$ and the $C_{\underline{n}}$'s are variables. We denote the multinomial expansion of the $j$th power ${Y_0}^j$  of $Y_0$ by:
$${Y_0}^j=\displaystyle\sum_{\underline{n}\geq_{\mathrm{grlex}} (0,\ldots,0,1)} C_{\underline{n}}^{(j)}\underline{x}^{\underline{n}}.$$
where $C_{\underline{n}}^{(j)}\in  K[(C_{\underline{n}})_{\underline{n}\in\mathbb{N}^r}]$.  
Of course, one has that $C_{\underline{n}}^{(j)}=0$ for $|\underline{n}|<j$ and $C_{(0,\ldots,0,j)}^{(j)}={C_{(0,\ldots,0,1)}}^j\neq 0$. For $j=0$, we set ${Y_0}^0:=1$. We remark that for any $\underline{n}$ and any $j\leq |\underline{n}|$, $C_{\underline{n}}^{(j)}$ is a homogeneous polynomial of degree $j$ in the $C_{\underline{m}}$'s for $\underline{m}\in\mathbb{N}^r$, $\underline{m}\leq_{\mathrm{grlex}} \underline{n}-(j-1)(0,\ldots,0,1)$, with coefficients in $\mathbb{N}^*$ (indeed, each monomial occurring in $C_{\underline{n}}^{(j)}$ is of the form $C_{\underline{i}_1}\ldots C_{\underline{i}_j}$ with $\underline{i}_k\geq_{\mathrm{grlex}} (0,\ldots,0,1)$ and $\underline{i}_1+\cdots+\underline{i}_j=\underline{n}$, so $\underline{i}_k\leq_{\mathrm{grlex}} \underline{n}-(j-1)(0,\ldots,0,1)$ for any $k$).

Now suppose we are given a series $y_0=\displaystyle\sum_{\underline{n}\geq_{\mathrm{grlex}} (0,\ldots,0,1)} c_{\underline{n}}\underline{x}^{\underline{n}}\in K[[\underline{x}]]$ with  $c_{(0,\ldots,0,1)}\neq 0$. For any $j\in\mathbb{N}$, we denote the multinomial expansion of ${y_0}^j$ by:
$${y_0}^j=\displaystyle\sum_{\underline{n}\geq_{\mathrm{grlex}} (0,\ldots,0,1)} c_{\underline{n}}^{(j)}\underline{x}^{\underline{n}}.$$
So, $c_{\underline{n}}^{(j)}=C_{\underline{n}}^{(j)}(c_{(0,\ldots,0,1)},\ldots,c_{\underline{n}-(j-1)(0,\ldots,0,1)})$.

\begin{defi}\label{defi:mat_Wilc}
\begin{enumerate}
    \item 
Given an ordered pair $(\underline{i},j)\in\mathbb{N}^r\times\mathbb{N}$, we call \textbf{Wilczynski vector} $V_{\underline{i},j}$ the infinite vector with components $\gamma_{\underline{n}}^{\underline{i},j}$ with $\underline{n}\in\mathbb{N}^r$ ordered with $\leq_{\textrm{grlex}}$:\\
- if $j\geq 1$:
$$V_{i,j}:=\left(\gamma_{\underline{n}}^{\underline{i},j}\right)_{\underline{n}\in\mathbb{N}^r} \textrm{ with } \gamma_{\underline{n}}^{\underline{i},j}=\left\{\begin{array}{ll}
=c_{\underline{n}-\underline{i}}^{(j)}& \textrm{ if }\underline{n}\geq\underline{i}\\
=0 & \textrm{ if }\underline{n}<\underline{i}
\end{array}\right.$$
- otherwise: 1  in the $\underline{i}$th position and 0 for the other coefficients,
$$V_{\underline{i},0}:=(0,\ldots,1,0,0,\ldots,0,\ldots).$$
\item Let $\mathcal{F}$ and $\mathcal{G}$ be two sequences as in Definition \ref{defi:alg-relative}. We associate to $\mathcal{F}$ and $\mathcal{G}$ the \textbf{ (infinite)  Wilczynski matrix } whose columns are the corresponding vectors $V_{\underline{i},j}$:
$$M_{\mathcal{F},\mathcal{G}}:=(V_{\underline{i},j})_{(\underline{i},j)\in\mathcal{F}\cup \mathcal{G}}\, ,$$
$\mathcal{F}\cup\mathcal{G}$ being ordered anti-lexicographically.\\ 
We define also the \textbf{reduced Wilczynski matrix}, $M_{\mathcal{F},\mathcal{G}}^{red}$: it is the matrix obtained from $M_{\mathcal{F},\mathcal{G}}$ by removing the columns indexed in $\mathcal{G}$,  and also removing the corresponding rows (suppress the $\underline{i}$th row for any $(\underline{i},0)\in\mathcal{G}$). This amounts exactly to remove the rows containing the coefficient 1 for some Wilczynski vector indexed in $\mathcal{G}$. 
\end{enumerate}
\end{defi}

\begin{lemma}[generalized Wilczynski]\label{lemme:wilcz}
The series $y_0$ is algebraic relatively to $(\mathcal{F},\mathcal{G})$ if and only if all the minors of order $|\mathcal{F}\cup\mathcal{G}|$ of the  Wilczynski matrix $M_{\mathcal{F},\mathcal{G}}$ vanish, or also if and only if all the minors of order  $|\mathcal{F}|$ of the reduced Wilczynski matrix $M_{\mathcal{F},\mathcal{G}}^{red}$ vanish.
\end{lemma}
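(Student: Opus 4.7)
The starting point is to expand $P(\underline{x},y_0)$ coefficient by coefficient and recognize that the vector of these coefficients is exactly a linear combination of Wilczynski vectors. Concretely, for $(\underline{i},j)\in\mathcal{F}$ (so $j\geq 1$), the coefficient of $\underline{x}^{\underline{n}}$ in $\underline{x}^{\underline{i}}y_0^j$ is $c_{\underline{n}-\underline{i}}^{(j)}$ if $\underline{n}\geq \underline{i}$ and $0$ otherwise, which is precisely the entry $\gamma_{\underline{n}}^{\underline{i},j}$ of $V_{\underline{i},j}$. For $(\underline{i},0)\in\mathcal{G}$, the coefficient of $\underline{x}^{\underline{n}}$ in $\underline{x}^{\underline{i}}$ is $\delta_{\underline{n},\underline{i}}$, which is exactly $V_{\underline{i},0}$. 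Hence
$$P(\underline{x},y_0)=\sum_{\underline{n}\in\mathbb{N}^r}\left(\sum_{(\underline{i},j)\in\mathcal{F}\cup\mathcal{G}}a_{\underline{i},j}\,\gamma_{\underline{n}}^{\underline{i},j}\right)\underline{x}^{\underline{n}},$$
and $P(\underline{x},y_0)=0$ is equivalent to the vanishing of all $\underline{n}$-coordinates of $M_{\mathcal{F},\mathcal{G}}\,(a_{\underline{i},j})^T$.

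Thus the existence of a nonzero polynomial $P$ of the prescribed support annihilating $y_0$ is equivalent to the existence of a nonzero vector in the kernel of $M_{\mathcal{F},\mathcal{G}}$, viewed as a linear map from $K^{|\mathcal{F}\cup\mathcal{G}|}$ to the space of columns. Since the matrix has only finitely many columns ($N:=|\mathcal{F}\cup\mathcal{G}|$), its columns are $K$-linearly dependent if and only if every $N\times N$ minor vanishes: indeed, linear dependence is equivalent to the column rank being $<N$, and the latter is equivalent to every $N\times N$ submatrix being singular. This yields the first equivalence of the lemma.

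For the reduced form, the key observation is that the columns $V_{\underline{i},0}$ for $(\underline{i},0)\in\mathcal{G}$ are standard basis vectors whose unique nonzero entries sit in pairwise distinct rows (the rows indexed by the $\underline{i}$'s appearing in $\mathcal{G}$). Writing out the equations $\sum a_{\underline{i},j}\gamma_{\underline{n}}^{\underline{i},j}=0$, for each $(\underline{i},0)\in\mathcal{G}$ the equation indexed by $\underline{n}=\underline{i}$ uniquely solves $a_{\underline{i},0}$ in terms of the $a_{\underline{i}',j'}$ with $(\underline{i}',j')\in\mathcal{F}$:
$$a_{\underline{i},0}=-\sum_{(\underline{i}',j')\in\mathcal{F}}a_{\underline{i}',j'}\,\gamma_{\underline{i}}^{\underline{i}',j'}.$$
Substituting these relations into the remaining equations (i.e.\ deleting the $\mathcal{G}$-indexed columns and the associated rows) gives the system $M_{\mathcal{F},\mathcal{G}}^{red}\,(a_{\underline{i},j})_{(\underline{i},j)\in\mathcal{F}}^T=0$. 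A nontrivial solution of the original system exists iff either some $a_{\underline{i},j}$ with $(\underline{i},j)\in\mathcal{F}$ is nonzero — equivalent, by the same finite-column argument as above, to the vanishing of every $|\mathcal{F}|\times|\mathcal{F}|$ minor of $M_{\mathcal{F},\mathcal{G}}^{red}$ — or all $a_{\underline{i},j}$ with $(\underline{i},j)\in\mathcal{F}$ vanish while some $a_{\underline{i},0}$, $(\underline{i},0)\in\mathcal{G}$, is nonzero; but the latter would force the corresponding $V_{\underline{i},0}$-component to be nonzero in $P(\underline{x},y_0)$, contradicting $P(\underline{x},y_0)=0$. Hence a nonzero $P$ exists iff all maximal minors of $M_{\mathcal{F},\mathcal{G}}^{red}$ vanish.

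The only subtle point is the bookkeeping of indices (anti-lexicographic order, the distinction $\mathcal{F}\geq_{\textrm{alex}}(\underline{0},1)>_{\textrm{alex}}\mathcal{G}$, and the fact that columns indexed in $\mathcal{G}$ only interact with $\mathcal{F}$-columns through the $\mathcal{G}$-rows being eliminated); the rest is routine linear algebra, so no real obstacle is expected.
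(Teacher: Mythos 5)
Your proposal is correct and follows essentially the same route as the paper: expand $P(\underline{x},y_0)$ to identify its coefficient vector with $M_{\mathcal{F},\mathcal{G}}\cdot(a_{\underline{i},j})$, characterize the existence of a nontrivial kernel element by the vanishing of all maximal minors, and pass to the reduced matrix by eliminating the $\mathcal{G}$-columns via the relations $a_{\underline{i},0}=-\sum_{(\underline{i}',j')\in\mathcal{F}}a_{\underline{i}',j'}c_{\underline{i}-\underline{i}'}^{(j')}$ (the paper's Relation (\ref{equ:terme-cst})). The only difference is that the paper cites its earlier one-variable work for the infinite-rows/finitely-many-columns rank criterion, which you treat as routine linear algebra; both are fine.
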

\begin{demo}
Given a   nontrivial polynomial $P(\underline{x},y)=\displaystyle\sum_{(\underline{i},j) \in\mathcal{F}\cup\mathcal{G}}a_{\underline{i},j}\underline{x}^{\underline{i}}y^j$,  we compute:
$$ P(\underline{x},y_0)=\displaystyle\sum_{(\underline{i},j) \in\mathcal{F}}a_{\underline{i},j}\underline{x}^{\underline{i}}\left(\displaystyle\sum_{\underline{n}\geq_{\mathrm{grlex}} (0,\ldots,0,1)} c_{\underline{n}}^{(j)}\underline{x}^{\underline{n}}\right)+\displaystyle\sum_{(\underline{i},0) \in\mathcal{G}}a_{\underline{i},0}\underline{x}^{\underline{i}}.$$
The coefficients of the expansion of  $P(\underline{x},y_0)$ with respect to the powers of $\underline{x}$ in increasing order $\leq_{\mathrm{grlex}}$ are exactly the components of the infinite vector resulting from the following operation:
$$M_{\mathcal{F},\mathcal{G}}\cdot (a_{\underline{i},j})_{(\underline{i},j)\in\mathcal{F}\cup\mathcal{G}}.$$
The series $y_0$ is a root of a nonzero polynomial with support included into $\mathcal{F}\cup\mathcal{G}$  if and only if there is a non zero solution $(a_{i,j})_{(i,j)\in\mathcal{F}\cup\mathcal{G}}$ of the following equation:
$$ M_{\mathcal{F},\mathcal{G}}\cdot (a_{\underline{i},j})_{(\underline{i},j)\in\mathcal{F}\cup\mathcal{G}}=0.$$
This means that the rank of  $M_{\mathcal{F},\mathcal{G}}$ is less than  $|\mathcal{F}\cup\mathcal{G}|$, the number of columns of  $M_{\mathcal{F},\mathcal{G}}$. The latter condition is characterized as in finite dimension by the vanishing of all the minors of maximal order (see \cite[Lemma 1]{hickel-matu:puiseux-alg}).\\ 
Let us now remark that, in the infinite vector $M_{\mathcal{F},\mathcal{G}}\cdot (a_{\underline{i},j})_{(\underline{i},j)\in\mathcal{F}\cup\mathcal{G}}$, if we remove the components indexed by  $\underline{i}$ for $(\underline{i},0)\in\mathcal{G}$, then we get exactly the infinite vector $M_{\mathcal{F},\mathcal{G}}^{red}\cdot (a_{\underline{i},j})_{(\underline{i},j)\in\mathcal{F}}$. The vanishing of the latter means precisely that the rank of  $M_{\mathcal{F},\mathcal{G}}^{red}$ is less than $|\mathcal{F}|$. 
Conversely, if the columns of  $M_{\mathcal{F},\mathcal{G}}^{red}$ are dependent for certain $\mathcal{F}$ and $\mathcal{G}$, we denote by $(a_{\underline{i},j})_{(\underline{i},j)\in\mathcal{F}}$ a corresponding sequence of coefficients of a nontrivial vanishing linear combination of the column vectors. Then it suffices to note that the remaining  coefficients $a_{\underline{k},0}$ for  $(\underline{k},0)\in\mathcal{G}$ are each uniquely determined  as follows: 
\begin{equation}\label{equ:terme-cst}
a_{\underline{k},0}=-\displaystyle\sum_{(\underline{i},j)\in\mathcal{F},\, \underline{i}< \underline{k}} a_{\underline{i},j}c_{\underline{k}-\underline{i}}^{(j)}\,.
\end{equation} 
\end{demo}

We deal with the implicitization problem for algebraic power series: for fixed bounded degrees in $\underline{x}$ and $y$,  given the expression of an algebraic series, can we reconstruct a vanishing polynomial? if yes, how?

\begin{defi}\label{defi:poly-wilc}
Let us consider the abstract version $\textbf{M}_{\mathcal{F},\mathcal{G}}$ and $\textbf{M}_{\mathcal{F},\mathcal{G}}^{red}$ associated to the abstract series $Y_0=\displaystyle\sum_{\underline{n}\geq_{\mathrm{grlex}} (0,\ldots,0,1)} C_{\underline{n}}\underline{x}^{\underline{n}}\in K[(C_{\underline{n}})_{\underline{n}\in\mathbb{N}^r}][[\underline{x}]]$ and to two sequences $\mathcal{F}$ and $\mathcal{G}$ of  ordered pairs $(\underline{i},j)$ as in Definition \ref{defi:alg-relative} of the  Wilczynski matrices. We call \textbf{Wilczynski polynomial} any  polynomial in the   variables $C_{\underline{n}}$ of $Y_0$ obtained as a minor of $\textbf{M}_{\mathcal{F},\mathcal{G}}^{red}$. We denote such  Wilczynski polynomial by $Q_{\underline{K},\underline{I}}$, where  $\underline{I}:=((\underline{i}_1,j_1),\ldots,(\underline{i}_l,j_l))$ is a subsequence of  $\mathcal{F}$ indicating the $l$ columns of $\textbf{M}_{\mathcal{F},\mathcal{G}}^{red}$, and  $\underline{K}:=(\underline{k}_1,\underline{k}_2,\cdots,\underline{k}_l)$ a strictly increasing sequence of elements of $\left(\mathbb{N}^r,\leq_{\mathrm{grlex}}\right)$ indicating the $l$ rows of  $\textbf{M}_{\mathcal{F},\mathcal{G}}^{red}$ used to form the minor of $\textbf{M}_{\mathcal{F},\mathcal{G}}^{red}$. One  has that $l\in\mathbb{N}^*$, $l\leq |\mathcal{F}|$, $l$ being the order of that minor, that we will also call the  \textbf{order} of the Wilczynski polynomial $Q_{\underline{K},\underline{I}}$. Note also  that a  Wilczynski polynomial $Q_{\underline{K},\underline{I}}$ is  either homogeneous of degree equal to $\displaystyle\sum_{(\underline{i},j)\in \underline{I}}j\ $  or identically 0 (indeed, the multinomial  coefficients  $ C_{\underline{k}}^{(j)}$ in a column indexed by $(\underline{i},j)$ of $\textbf{M}_{\mathcal{F},\mathcal{G}}^{red}$ are either homogeneous of degree $j$ (case $|\underline{k}|\geq j$) or identically 0 (case $|\underline{k}|<j$)). By convention, we call  \textbf{Wilczynski polynomial of order 0} any nonzero constant polynomial. 
\end{defi}

By  Lemma \ref{lemme:wilcz}, the algebraicity of $y_0$ for certain $\mathcal{F}$ and $\mathcal{G}$ is equivalent to the vanishing of all the  $Q_{\underline{K},\mathcal{F}}$ of order $l=|\mathcal{F}|$, for the specific values of the  given $c_{\underline{n}}$, coefficients of $y_0$. \\

\begin{ex}\label{ex:wilc} We resume Example \ref{ex:eclt}, using for simplicity the variables $\underline{x}$ instead of the variables $\underline{u}$. Let $y_0=\displaystyle\sum_{\underline{n}\geq_{\mathrm{grlex}} (0,1)} c_{\underline{n}}\underline{x}^{\underline{n}}\in K[[\underline{x}]]$ be a series with  $c_{0,1}\neq 0$. 
We consider the conditions for $y_0$ to be a root of a polynomial of type:
$$\begin{array}{lcr}
P(\underline{x},y)&=&a_{0,2,0}{x_2}^2+a_{2,2,0}{x_1}^2{x_2}^2+ \left(a_{0,2,1}{x_2}^2+a_{2,2,1}{x_1}^2{x_2}^2\right)y\\&&+ \left(a_{0,0,2}+a_{0,2,2}{x_2}^2+a_{2,2,2}{x_1}^2{x_2}^2\right)y^2.
\end{array}$$
Thus, $\mathcal{F}=\left\{(0,2,1),(2,2,1),(0,0,2),(0,2,2),(2,2,2)\right\}\ \textrm{ and }\ \mathcal{G}=\left\{(0,2,0),(2,2,0)\right\}$.
 The corresponding Wilczynski matrix and the reduced matrix are given in Section \ref{section:appendice}. We give five nontrivial  Wilczynski polynomials of maximal order 5,  which are equal to $5\times 5$  minors of $\textbf{M}^{red}$. So one has that $\underline{I}=\mathcal{F}$ as index for $Q_{\underline{K},\underline{I}}$:\\

\noindent$\begin{array} {rl} Q_{\underline{K},\mathcal{F}}:=&-2\,{C_{{0,1}}}^{7}C_{{1,0}}\ \textrm{ for }\underline{K}=((1,1),(0,3),(0,4),(2,3),(2,4)),  \\
Q_{\underline{K},\mathcal{F}}:=&-2\,{C_{{0,1}}}^{3} \left( C_{{1,0}}C_{{1,1}}+C_{{0,1}}C_{{2,0}}
 \right)  \left( {C_{{0,1}}}^{2}C_{{2,0}}-C_{{0,2}}{C_{{1,0}}}^{2}
 \right)\\
&\ \ \ \ \  \textrm{ for }\underline{K}=((0,3),(2,1),(0,4),(2,4),(4,2))   \\
Q_{\underline{K},\mathcal{F}}:=&-2\,{C_{{0,1}}}^{5} \left( -C_{{0,1}}C_{{1,0}}C_{{0,3}}+{C_{{0,2}}}^{2
}C_{{1,0}}+{C_{{0,1}}}^{2}C_{{1,2}} \right)\\
 &\ \ \ \ \  \textrm{ for }\underline{K}=((0,3),(0,4),(1,3),(2,3),(2,4)),   \\
Q_{\underline{K},\mathcal{F}}:=& -2\,{C_{{0,1}}}^{4} \left( -C_{{0,1}}{C_{{1,0}}}^{2}C_{{0,3}}+{C_{{1,0
}}}^{2}{C_{{0,2}}}^{2}+2\,C_{{0,1}}C_{{1,0}}C_{{0,2}}C_{{1,1}}+{C_{{0,
1}}}^{2}C_{{1,0}}C_{{1,2}}\right.\\
&\left.\ \  \ \ \  \ \ \  \ \ \  \ \ \  \ \ \  \ -{C_{{0,1}}}^{2}{C_{{1,1}}}^{2} \right) \\
&\ \ \ \ \  \textrm{ for }\underline{K}=((1,2),(0,4),(1,3),(2,3),(2,4)).\\
Q_{\underline{K},\mathcal{F}}:=&-2\,{C_{{0,1}}}^{6} \left( C_{{1,0}}C_{{2,1}}+C_{{1,1}}C_{{2,0}}+C_{{0
,1}}C_{{3,0}} \right) \\
&\ \ \ \ \  \textrm{ for }\underline{K}=((0,3),(0,4),(3,1),(2,3),(2,4)).
\end{array}$\\

\noindent The series $y_0$ is a root of a polynomial $P(x,y)$ as above if and only if all the Wilczynski polynomials of order 5 vanish at the $c_{\underline{n}}$. Since $c_{{0,1}}\neq 0$, this implies in particular that:
\begin{equation}\label{equ:exemple}c_{{1,0}}=c_{{2,0}}=c_{{1,2}}=c_{{1,1}}=c_{{3,0}}=0.\end{equation}
\end{ex}

Recall that the dimension of the space of polynomials in $r$ variables of degree at most $d$ is equal to the binomial number $\displaystyle\binom{d+r}{r}$.

\begin{theo}\label{theo:wilc}
Let $\mathcal{F}$ and $\mathcal{G}$ be two finite sequences of ordered pairs as in Definition \ref{defi:alg-relative}. We set  $d_y:=\max\{j,\ (i,j)\in\mathcal{F}\}$, $d_x:=\max\{|\underline{i}|,\ (\underline{i},j)\in\mathcal{F}\cup\mathcal{G}\}$ and $N:=2d_xd_y$.  Then there exist a finite set $\Lambda$ and  a finite number of homogeneous polynomials: $$\left(\left\{a_{\underline{i},j}^{(\lambda)}\in\mathbb{Z}[C_{(0,\ldots,0,1)},\ldots,C_{(N,0,\ldots,0)}],\ \ (\underline{i},j)\in\mathcal{F}\cup\mathcal{G}\right\}\right)_{\lambda\in\Lambda}$$ 
(where the variables $C_{\underline{n}}$ are listed with indices ordered by $\leq_{\textrm{grlex}}$)

$$\textrm{ of total degree } \left\{\begin{array}{ll}
\deg a_{\underline{i},j}^{(\lambda)}\leq \frac{1}{2}d_y(d_y+1)\displaystyle\binom{d_x+r}{r}-1&\textrm{ for }(\underline{i},j)\in\mathcal{F}\\
\deg a_{\underline{i},0}^{(\lambda)}\leq \frac{1}{2}d_y(d_y+1)\displaystyle\binom{d_x+r}{r}-1+|\underline{i}|&\textrm{ for }(\underline{i},0)\in\mathcal{G}\end{array}\right.$$ such that, for any  series $y_0=\displaystyle\sum_{\underline{n}\geq_{\mathrm{grlex}} (0,\ldots,0,1)} c_{\underline{n}}\underline{x}^{\underline{n}}\in K\left[\left[\underline{x}\right]\right]$ with $c_{(0,\ldots,0,1)}\neq 0$ algebraic relatively to $(\mathcal{F},\mathcal{G})$, there is $\lambda\in\Lambda$ such that the polynomial in $ K\left[\underline{x },y\right]$:
\begin{equation}\label{equ:reconstruction}
 P^{(\lambda)}(x,y)=\displaystyle\sum_{(\underline{i},j) \in\mathcal{F}}a_{\underline{i},j}^{(\lambda)}(c_{(0,\ldots,0,1)},\ldots,c_{(N,0,\ldots,0)})\underline{x}^{\underline{i}}y^j+\displaystyle\sum_{(\underline{i},0) \in\mathcal{G}}a_{\underline{i},0}^{(\lambda)}(c_{(0,\ldots,0,1)},\ldots,c_{(N,0,\ldots,0)})\underline{x}^{\underline{i}}
\end{equation}
is nonzero and vanishes at $y_0$. 
\end{theo}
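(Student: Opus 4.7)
The overall strategy is to apply Lemma \ref{lemme:wilcz} to reduce the problem to extracting, from the reduced Wilczynski matrix $\textbf{M}_{\mathcal{F},\mathcal{G}}^{red}$, a universal nontrivial linear relation among its columns by a Laplace/cofactor expansion. Concretely, if $y_0$ is algebraic relative to $(\mathcal{F},\mathcal{G})$, the specialized matrix $M_{\mathcal{F},\mathcal{G}}^{red}$ has rank $\rho<|\mathcal{F}|$, and the desired vanishing linear combination (unique up to scalar) can be read off from any $\rho\times\rho$ nonzero minor extended by one additional column.

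I would take $\Lambda$ to parametrize all triples $\lambda=(\rho,\underline{I},\underline{K})$ where $\rho\in\{1,\dots,|\mathcal{F}|-1\}$, where $\underline{I}=((\underline{i}_1,j_1),\dots,(\underline{i}_{\rho+1},j_{\rho+1}))$ is a $\leq_{\mathrm{alex}}$-ordered subfamily of $\mathcal{F}$ of size $\rho+1$, and where $\underline{K}=(\underline{k}_1,\dots,\underline{k}_\rho)$ is a $\leq_{\mathrm{grlex}}$-ordered $\rho$-tuple of row indices in $\mathbb{N}^r$ with $|\underline{k}_l|\leq N$. For such $\lambda$, define
\[
a_{\underline{i}_k,j_k}^{(\lambda)}\,:=\,(-1)^{k}\,Q_{\underline{K},\,\underline{I}\setminus\{(\underline{i}_k,j_k)\}}\qquad (k=1,\dots,\rho+1),
\]
the Wilczynski polynomial of order $\rho$ from Definition \ref{defi:poly-wilc}; set $a_{\underline{i},j}^{(\lambda)}:=0$ for $(\underline{i},j)\in\mathcal{F}\setminus\underline{I}$; and define the remaining $a_{\underline{i},0}^{(\lambda)}$ for $(\underline{i},0)\in\mathcal{G}$ by formula (\ref{equ:terme-cst}). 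The key identity is that, for any additional row $\underline{k}$, the sum $\sum_{k=1}^{\rho+1} a_{\underline{i}_k,j_k}^{(\lambda)}\,C_{\underline{k}-\underline{i}_k}^{(j_k)}$ is, by Laplace expansion along the last row, precisely the determinant of the $(\rho+1)\times(\rho+1)$ submatrix of $\textbf{M}_{\mathcal{F},\mathcal{G}}^{red}$ indexed by rows $(\underline{K},\underline{k})$ and columns $\underline{I}$.

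To verify the theorem, given an algebraic $y_0$ whose specialization has rank $\rho$, one chooses $\underline{I}_0\subset\mathcal{F}$ of size $\rho$ spanning the column space together with rows $\underline{K}$ of size $\rho$ satisfying $|\underline{k}_l|\leq N$ such that $Q_{\underline{K},\underline{I}_0}$ is nonzero at the $c_{\underline{n}}$; extending $\underline{I}_0$ by any $(\underline{i}_{\rho+1},j_{\rho+1})\in\mathcal{F}\setminus\underline{I}_0$ produces the relevant $\underline{I}$. All $(\rho+1)$-minors of $M_{\mathcal{F},\mathcal{G}}^{red}$ vanish because its rank is $\rho$; hence for every $\underline{k}$ the Laplace expansion above is zero, so $M_{\mathcal{F},\mathcal{G}}^{red}\cdot(a_{\underline{i},j}^{(\lambda)})_{(\underline{i},j)\in\mathcal{F}}=0$, which by Lemma \ref{lemme:wilcz} yields $P^{(\lambda)}(\underline{x},y_0)=0$. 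Nonvanishing of $P^{(\lambda)}$ follows because $a_{\underline{i}_{\rho+1},j_{\rho+1}}^{(\lambda)}=\pm Q_{\underline{K},\underline{I}_0}\neq 0$ by the choice of $\underline{K}$ and $\underline{I}_0$.

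The degree estimates fall out of Definition \ref{defi:poly-wilc}: the entries of column $(\underline{i},j)$ are homogeneous of degree $j$ in the $C_{\underline{n}}$, so $Q_{\underline{K},\underline{I}\setminus\{(\underline{i}_k,j_k)\}}$ has total degree $\sum_{\underline{I}}j-j_k\leq \sum_{\mathcal{F}}j-1\leq \tfrac{1}{2}d_y(d_y+1)\binom{d_x+r}{r}-1$, using that $|\{(\underline{i},j)\in\mathcal{F}:j\text{ fixed}\}|\leq\binom{d_x+r}{r}$. For $(\underline{i},0)\in\mathcal{G}$, each term in (\ref{equ:terme-cst}) is homogeneous of degree $\deg a_{\underline{i}',j'}^{(\lambda)}+j'$; the constraint $C_{\underline{i}-\underline{i}'}^{(j')}\neq 0$ forces $j'\leq|\underline{i}-\underline{i}'|\leq|\underline{i}|$, yielding the stated bound. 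The main technical obstacle, and the raison d'être of the explicit bound $N=2d_xd_y$, is showing that the rank of the specialization is already visible among rows $\underline{k}$ with $|\underline{k}|\leq N$. I would derive this by a Newton-polygon style analysis: if $P\in K[\underline{x},y]$ with support in $\mathcal{F}\cup\mathcal{G}$ is such that $P(\underline{x},y_0)$ has trivial coefficients at all monomials $\underline{x}^{\underline{n}}$ outside $\mathcal{G}$ with $|\underline{n}|\leq 2d_xd_y$, then it vanishes identically, because beyond this cascade depth the relations become consequences of those at lower depth via the Henselian-type recursion that the implicit function theorem provides for $y_0$ (an analogue of the bound used in \cite{hickel-matu:puiseux-alg} in one variable, with an extra factor accounting for the weight of $\underline{i}$).
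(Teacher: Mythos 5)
Your reconstruction machinery is essentially the paper's: extracting the coefficients $a^{(\lambda)}_{\underline{i},j}$ as signed cofactors of a maximal nonzero minor $Q_{\underline{K},\underline{I}_0}$ (your Laplace expansion along an appended row is the same computation as the Cramer's-rule step in the paper's proof), deriving the $\mathcal{G}$-coefficients from \eqref{equ:terme-cst}, and the homogeneity/degree bounds all match. One minor omission: your $\Lambda$ ranges over $\rho\geq 1$ only, whereas the specialized matrix $M^{red}_{\mathcal{F},\mathcal{G}}$ can have rank $0$ (all columns indexed by $\mathcal{F}$ vanish after the $\mathcal{G}$-rows are deleted); this case must be handled separately by fixing some $a_{\underline{i},j}:=1$, $(\underline{i},j)\in\mathcal{F}$, and computing the $\mathcal{G}$-coefficients from \eqref{equ:terme-cst} — it is the paper's case $|\mathcal{F}'|=1$ and the reason for the convention of ``Wilczynski polynomials of order $0$''.

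The substantive gap is the step you defer to your last paragraph, which is the entire quantitative content of the theorem: that the rank of the infinite matrix is already attained on the rows $\underline{k}$ with $|\underline{k}|\leq 2d_xd_y$, equivalently that any $P$ supported on $\mathcal{F}\cup\mathcal{G}$ with $\mathrm{ord}_{\underline{x}}P(\underline{x},y_0)>2d_xd_y$ must satisfy $P(\underline{x},y_0)=0$. You state the correct lemma, but the proposed justification (``beyond this depth the relations become consequences of those at lower depth via the Henselian-type recursion'') is not a proof and does not obviously become one: a Hensel-type uniqueness statement controls the continuation of $y_0$ as a root of its own minimal polynomial, not the order of contact of $y_0$ with an arbitrary $Q$ of bounded degree that does \emph{not} vanish at $y_0$. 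The argument actually needed (Lemma \ref{lemme:ordreQ}) is of a different nature: letting $P_0$ be the irreducible generator of the ideal of relations of $y_0$, for any such $Q$ the polynomials $P_0$ and $Q$ are coprime in $K(\underline{x})[y]$, and evaluating the B\'ezout identity $AP_0+BQ=\mathrm{Res}_y(P_0,Q)$ at $y=y_0$ gives $\mathrm{ord}_{\underline{x}}Q(\underline{x},y_0)\leq \deg_{\underline{x}}\mathrm{Res}_y(P_0,Q)\leq 2d_xd_y$; Lemma \ref{lemme:profondeur} then transfers this bound to the depth of the first nonzero minor. Without this resultant argument (or an equivalent one), the uniform bound $N$ — and with it the finiteness of $\Lambda$ and the fixed variable list $C_{(0,\ldots,0,1)},\ldots,C_{(N,0,\ldots,0)}$ — is unproved.
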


\begin{demo} First, we give the reconstruction process. Then we will show its finiteness.

Let $y_0=\displaystyle\sum_{\underline{n}\ge_{\mathrm{grlex}} (0,\ldots,0,1)} c_{\underline{n}}\underline{x}^{\underline{n}}\in K\left[\left[\underline{x}\right]\right]$ with $c_{(0,\ldots,0,1)}\neq 0$ be algebraic relatively to $(\mathcal{F},\mathcal{G})$. We show how to reconstruct a nonzero vanishing polynomial   $P(\underline{x},y)$ of $y_0$.

We consider a minimal family $\mathcal{F}'\subseteq\mathcal{F}$ such that $y_0$ is algebraic relatively to $(\mathcal{F}',\mathcal{G})$. Let $Q(\underline{x},y)=\displaystyle\sum_{(\underline{i},j) \in\mathcal{F}'}b_{\underline{i},j}\underline{x}^{\underline{i}}y^j+\displaystyle\sum_{(\underline{i},0) \in\mathcal{G}}b_{\underline{i},0}\underline{x}^{\underline{i}}$ be a nonzero polynomial that vanishes at $y_0$. Let $m:=| \mathcal{F}'|$.
If $m=1$, $Q(\underline{x},y)$ is of the form:
$$Q(\underline{x},y)=b_{\underline{i},j}\underline{x}^{\underline{i}}y^j + \displaystyle\sum_{(\underline{i},0) \in\mathcal{G}}b_{\underline{i},0}\underline{x}^{\underline{i}},$$
with $b_{\underline{i},j}\neq 0$. So we must have that $b_{\underline{n},0}=0$ for $|\underline{n}-\underline{i}|<j$, and the series $y_0$ verifies: 
$$\displaystyle\sum_{(\underline{n},0) \in\mathcal{G}}b_{\underline{n },0}\underline{x}^{\underline{n}}=-b_{\underline{i},j}\underline{x}^{\underline{i}}y_0^j=\displaystyle\sum_{\underline{n}\geq \underline{i}} -b_{\underline{i},j}c_{\underline{n}-\underline{i}}^{(j)}\underline{x}^{\underline{n}}.$$
By Lemma \ref{lemme:wilcz}, the minors of order 1 of  $M_{(\underline{i},j),\mathcal{G}}^{red}$,  being equal to $c_{\underline{n}-\underline{i}}^{(j)}$ for $(\underline{n},0)\notin \mathcal{G}$, are all zero. We fix the coefficient $a_{\underline{i},j}$ arbitrarily in $\mathbb{Z}\setminus \{0\}$: it is a  constant Wilczynski polynomial. Then the other coefficients are uniquely determined in accordance with Relation \eqref{equ:terme-cst} by the equation: 
$$a_{\underline{n},0}\left(c_{(0,\ldots,0,1)},c_{(0,\ldots,1,0)},\ldots\right):=-a_{\underline{i},j}c_{\underline{n}-\underline{i}}^{(j)},\ \ (\underline{n},0)\in\mathcal{G}.$$
Thus $a_{\underline{n},0}$ is a polynomial of degree $j$ in the $C_{\underline{k}}$, $\underline{k}\leq_{\mathrm{grlex}} \underline{n}-\underline{i}-(j-1)(0,\ldots,0,1)$, which verifies indeed that $j\leq d_y \leq \frac{1}{2}d_y(d_y+1)\displaystyle\binom{d_x+r}{r}\leq \frac{1}{2}d_y(d_y+1)\displaystyle\binom{d_x+r}{r}-1+|\underline{n}|$. \\
Suppose now that $m=| \mathcal{F}'|\geq 2$.
By Lemma \ref{lemme:wilcz}, the minors of order $m$ of $M_{\mathcal{F}',\mathcal{G}}^{red}$ all vanish, and, because $\mathcal{F}'$ is minimal,     
there exists a nonzero minor of order $m-1$ of this matrix, i.e. a  Wilczynski polynomial evaluated at the $c_{\underline{n}}$'s: 
\begin{equation}\label{equ:nonzerominor}Q_{\underline{K}_0,\underline{I}_0}\left(c_{(0,\ldots,0,1)},c_{(0,\ldots,1,0)},\ldots\right)\neq 0.
\end{equation}
Let $(\underline{i}_0,j_0)\in\mathcal{F}'$ be such that $\mathcal{F}'=\underline{I}_0\cup\{(\underline{i}_0,j_0)\}$ and $p_0$ be the position of $(\underline{i}_0,j_0)$ in $\mathcal{F}'$. Denote by $M_{\underline{K}_0,\underline{I}_0}$ the square matrix  whose determinant is $Q_{\underline{K}_0,\underline{I}_0}\left(c_{(0,\ldots,0,1)},c_{(0,\ldots,1,0)},\ldots\right)$, and $W_{\underline{K}_0,(\underline{i}_0,j_0)}$ the truncated $p_0$-th column that has been removed from $M_{\mathcal{F}',\mathcal{G}}^{red}$ to form this minor. We get a system of equations with a non-vanishing determinant and $b_{\underline{i}_0,j_0}\neq 0$: 
\begin{equation}\label{equ:cramer1}M_{\underline{K}_0,\underline{I}_0}\cdot  (b_{\underline{i},j})_{(\underline{i},j)\neq (\underline{i}_0,j_0)}=-b_{\underline{i}_0,j_0}W_{\underline{K}_0,(\underline{i}_0,j_0)}.
\end{equation}
Let us build polynomials $a_{\underline{i},j}\in\Z[C_{(0,\ldots,0,1)},C_{(0,\ldots,1,0)},\ldots]$ verifying:
\begin{equation}\label{equ:cramer2}M_{\underline{K}_0,\underline{I}_0}\cdot  (a_{\underline{i},j}\left(c_{(0,\ldots,0,1)},c_{(0,\ldots,1,0)},\ldots\right))_{(\underline{i},j)\neq (\underline{i}_0,j_0)}=-a_{\underline{i}_0,j_0}\left(c_{(0,\ldots,0,1)},c_{(0,\ldots,1,0)},\ldots\right)W_{\underline{K}_0,(\underline{i}_0,j_0)},
\end{equation}
by taking $a_{\underline{i}_0,j_0}\left(c_{(0,\ldots,0,1)},c_{(0,\ldots,1,0)},\ldots\right):=(-1)^{p_0}Q_{\underline{K}_0,\underline{I}_0}\left(c_{(0,\ldots,0,1)},c_{(0,\ldots,1,0)},\ldots\right)$ and by computing the other $a_{\underline{i},j}\left(c_{(0,\ldots,0,1)},c_{(0,\ldots,1,0)},\ldots\right)$ by Cramer's rule. Thus the\\ $a_{\underline{i},j}\left(c_{(0,\ldots,0,1)},c_{(0,\ldots,1,0)},\ldots\right)$'s are all minors of order $m-1$ of $M_{\mathcal{F}',\mathcal{G}}^{red}$, and so, up to the sign, evaluations at the $c_{\underline{n}}$'s of Wilczynski polynomials $ Q_{\underline{K}_0,\underline{I}}$ of order $m-1$.  If $\underline{K}_0=(\underline{k}_{0,1},\ldots,\underline{k}_{0,m-1})$, we set: 
\begin{equation}\label{equ:N} \underline{n}_{y_0}:=\underline{k}_{0,m-1}.\end{equation}
The $a_{\underline{i},j}$ are  homogeneous polynomials of $\mathbb{Z}\left[C_{(0,\ldots,0,1)},\ldots,C_{\underline{n}_{y_0}}\right]$ (where the variables $C_{\underline{n}}$ are listed with indices ordered by $\leq_{\textrm{grlex}}$). Their degree  verifies:
$$\begin{array}{lcl}
\deg a_{\underline{i},j}=\deg Q_{\underline{K}_0,\underline{I}}&=&\displaystyle\sum_{(\underline{i},j)\in \underline{I},\  C_{\underline{k}}^{(j)}\neq 0}j\\
&\leq& -1+\displaystyle\sum_{(\underline{i},j)\in\mathcal{F}' }j\\
&\leq& -1+\displaystyle\binom{d_x+r}{r}\displaystyle\sum_{j=1}^{d_y}j\\
&=& \frac{1}{2}d_y(d_y+1)\displaystyle\binom{d_x+r}{r}-1.
\end{array}$$
The coefficients $a_{\underline{n},0}\left(c_{(0,\ldots,0,1)},c_{(0,\ldots,1,0)},\ldots\right)$ for $(\underline{n},0)\in\mathcal{G}$ are obtained via Relations (\ref{equ:terme-cst}):
\begin{equation}\label{equ:CI1}
a_{\underline{n},0}\left(c_{(0,\ldots,0,1)},c_{(0,\ldots,1,0)},\ldots\right)=-\displaystyle\sum_{(\underline{i},j)\in\mathcal{F}', \underline{n}>\underline{i}} a_{\underline{i},j}\left(c_{(0,\ldots,0,1)},c_{(0,\ldots,1,0)},\ldots\right)c_{\underline{n}-\underline{i}}^{(j)}.
\end{equation} 
Note that the coefficients $b_{\underline{n},0}$ for $(\underline{n},0)\in\mathcal{G}$ of $Q$ also satisfy:
\begin{equation}\label{equ:CI2}
b_{\underline{n},0}=-\displaystyle\sum_{(\underline{i},j)\in\mathcal{F}',\underline{ n}>\underline{i}} b_{\underline{i},j}c_{\underline{n}-\underline{i}}^{(j)}.
\end{equation} 
Let us set $a_{\underline{i},j}:=0$ for $( \underline{i},j)\in\mathcal{F}\setminus\mathcal{F}'$. Knowing that  $C_{\underline{n}-\underline{i}}^{(j)}\nequiv 0 \Rightarrow |\underline{n}-\underline{i}|\geq j$, and in this case $\deg C_{\underline{n}-\underline{i}}^{(j)}=j$, we deduce that $\deg a_{\underline{n},0}\leq |\underline{n}|+\max_{(\underline{i},j)\in\mathcal{F}'}(\deg a_{\underline{i},j})$ as desired. As the right-hand sides of Systems (\ref{equ:cramer1}) and (\ref{equ:cramer2}) are proportional, there is $\mu:=\displaystyle\frac{a_{\underline{i}_0,j_0}\left(c_{(0,\ldots,0,1)},c_{(0,\ldots,1,0)},\ldots\right)}{b_{\underline{i}_0,j_0}}  \in K\setminus\{0\}$ such that $a_{\underline{i},j}\left(c_{(0,\ldots,0,1)},c_{(0,\ldots,1,0)},\ldots\right)=\mu\, b_{\underline{i},j}$ for any $(\underline{i},j)\in\mathcal{F}'$. By Systems (\ref{equ:CI1}) and (\ref{equ:CI2}), one has also $a_{\underline{n},0}\left(c_{(0,\ldots,0,1)},c_{(0,\ldots,1,0)},\ldots\right)=\mu\, b_{\underline{n},0}$ for $(\underline{n},0)\in\mathcal{G}$. The polynomial $$P(\underline{x},y)=\displaystyle\sum_{(\underline{i},j)\in\mathcal{F}'\cup\mathcal{G}} a_{\underline{i},j}\left(c_{(0,\ldots,0,1)},c_{(0,\ldots,1,0)},\ldots\right) \underline{x}^{\underline{i}}y^j$$ is proportional to $Q$ (i.e. $P=\mu Q$), so it is nonzero and vanishes at $y_0$.\\

To obtain Theorem \ref{theo:wilc}, it suffices now to show that there exists a \underline{uniform} bound  $N_{d_x,d_y}$ for $|\underline{n}_{y_0}|$ as defined in Formula (\ref{equ:N}), which is a measure of  the depth in  $M_{\mathcal{F},\mathcal{G}}^{red}$ to which we get the reconstruction process, that is, the depth at which we find a first nonzero minor. We reach this in the two following lemmas. 

\begin{lemma}\label{lemme:ordreQ}
Let $d_x,\, d_y\in \mathbb{N}^*$. For any series   $y_0=\displaystyle\sum_{\underline{n}\geq_{\mathrm{grlex}} (0,\ldots,0,1)} c_{\underline{n}}\underline{x}^{\underline{n}}\in K\left[\left[\underline{x}\right]\right]$ with $c_{(0,\ldots,0,1)}\neq 0$, verifying an equation $P(\underline{x},y_0)=0$ where $P(\underline{x},y)\in K\left[\underline{x},y\right]\setminus\{0 \}$, $\deg_{\underline{x}}P\leq d_x,\ \deg_yP\leq d_y$, and for any polynomial $Q(\underline{x},y)\in K\left[\underline{x},y\right],\ \deg_{\underline{x}}Q\leq d_x,\ \deg_yQ\leq d_y$, such that $Q(\underline{x},y_0)\neq 0$, one has that $\mathrm{ord}_{\underline{x}}Q(\underline{x},y_0)\leq 2d_xd_y$.
\end{lemma}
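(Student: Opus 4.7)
The plan is to prove the lemma by a resultant argument, which gives the sharp bound $2d_xd_y$ directly from the Sylvester degree bound.

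First I would replace $P$ by the minimal polynomial of $y_0$ over $K(\underline{x})$. More precisely, let $P_{\min}\in K[\underline{x},y]$ be the primitive part of the minimal polynomial of $y_0$; it is irreducible in $K[\underline{x},y]$. Since $P_{\min}$ divides $P$ in $K(\underline{x})[y]$ and $P\in K[\underline{x},y]$, Gauss's lemma yields $P_{\min}\mid P$ in $K[\underline{x},y]$, hence
\[
\deg_{\underline{x}}P_{\min}\leq d_x, \qquad \deg_y P_{\min}\leq d_y.
\]

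Next I would form the resultant $R(\underline{x}):=\mathrm{Res}_y(P_{\min},Q)\in K[\underline{x}]$. Since $Q(\underline{x},y_0)\neq 0$, the irreducible polynomial $P_{\min}$ does not divide $Q$ in $K(\underline{x})[y]$, so $P_{\min}$ and $Q$ are coprime in $K(\underline{x})[y]$ and $R\neq 0$. The standard Sylvester matrix estimate gives
\[
\deg R \leq \deg_y Q\cdot \deg_{\underline{x}}P_{\min}+\deg_y P_{\min}\cdot \deg_{\underline{x}}Q \leq d_yd_x+d_yd_x=2d_xd_y,
\]
and hence a fortiori $\mathrm{ord}_{\underline{x}}R\leq \deg R\leq 2d_xd_y$.

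Finally I would use the classical B\'ezout identity for the resultant: there exist $A,B\in K[\underline{x},y]$ with
\[
R(\underline{x})=A(\underline{x},y)\,P_{\min}(\underline{x},y)+B(\underline{x},y)\,Q(\underline{x},y).
\]
Substituting $y=y_0$ (valid because $y_0\in K[[\underline{x}]]$ has positive $\underline{x}$-order, so $A(\underline{x},y_0),B(\underline{x},y_0)\in K[[\underline{x}]]$), and using $P_{\min}(\underline{x},y_0)=0$, I obtain $R(\underline{x})=B(\underline{x},y_0)\,Q(\underline{x},y_0)$ in $K[[\underline{x}]]$. Since $B(\underline{x},y_0)$ has nonnegative order, this gives $\mathrm{ord}_{\underline{x}}Q(\underline{x},y_0)\leq \mathrm{ord}_{\underline{x}}R\leq 2d_xd_y$, as required.

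The only subtle point is the reduction to a minimal vanishing polynomial $P_{\min}$ dividing $P$ in $K[\underline{x},y]$, needed to ensure that the resultant $R$ is nonzero; everything else reduces to the well-known Sylvester bound and B\'ezout identity for resultants. No characteristic assumption on $K$ is used, in agreement with the section's framework.
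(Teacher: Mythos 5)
Your proof is correct and follows essentially the same route as the paper's: the paper likewise reduces to an irreducible vanishing polynomial $P_0$ (obtained as the generator of the height-one prime ideal of relations of $y_0$, which is your $P_{\min}$), forms the resultant with $Q$, bounds its degree by $2d_xd_y$, and concludes via the B\'ezout identity evaluated at $y=y_0$. The only cosmetic difference is that you invoke Gauss's lemma and the refined Sylvester bound $\deg_yQ\cdot\deg_{\underline{x}}P_{\min}+\deg_yP_{\min}\cdot\deg_{\underline{x}}Q$, while the paper bounds the resultant as a determinant of order at most $2d_y$ with entries of degree at most $d_x$; both give the same estimate.
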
 
\begin{demo}
Let $y_0$ be a series as in the statement of Lemma \ref{lemme:ordreQ}. We consider the prime ideal $\mathfrak{I}_0:=\left\{R(\underline{x},y)\in K\left[\underline{x},y\right]\ |\ R(\underline{x},y_0)=0\right\}$.
Since $\mathfrak{I}_0\neq (0)$, $$\dim \left(K[\underline{x},y]/\mathfrak{I}_0\right)=\mathrm{trdeg}_K\mathrm{Frac}\left(K[\underline{x},y]/\mathfrak{I}_0\right)\leq r.$$ But, in $\mathrm{Frac}\left(K[\underline{x},y]/\mathfrak{I}_0\right)$, the elements $\overline{x}_1,\ldots,\overline{x}_r$ are algebraically independant (if not, we would have $P(\overline{x}_1,\ldots,\overline{x}_r)=\overline{0}$ for some non trivial $P\in K[\underline{X}]$, i.e. $P(x_1,\ldots,x_r)\in \mathfrak{I}_0$, a contradiction). Thus, $\mathfrak{I}_0$ is a height one prime ideal of the factorial ring $K\left[\underline{x},y\right]$. It is generated by an irreducible polynomial $P_0(\underline{x},y)\in K\left[\underline{x},y\right]$. We set $d_{0,x}:=\deg_{\underline{x}} P_0$ and $d_{0,y}:=\deg_y P_0$. Note also that, by factoriality of $K\left[\underline{x},y\right]$, $P_0$ is also irreducible as an element of $K\left(\underline{x}\right)[y]$.\\
Let $P$ be as in  the statement of Lemma \ref{lemme:ordreQ}. One has that $P=SP_0$ for some $S\in K\left[\underline{x},y\right]$. Hence $d_{0,x}\leq d_x$ and $d_{0,y}\leq d_y$. Let $Q\in K\left[\underline{x},y\right]$ be such that $Q(\underline{x},y_0)\neq 0$ with $\deg_{\underline{x}} Q\leq d_x$, $\deg_yQ\leq d_y$. So $P_0$ and $Q$ are coprime in $K\left(\underline{x}\right)[y]$. Their resultant $r(\underline{x})$ is nonzero. One has the following B\'ezout relation in $K\left[\underline{x}\right][y]$:
$$A(\underline{x},y)P_0(\underline{x},y)+B(\underline{x},y)Q(\underline{x},y)=r(\underline{x}).$$
We evaluate at $y=y_0$:
$$0+B(\underline{x},y_0)Q(\underline{x},y_0)=r(\underline{x}).$$
So $\mathrm{ord}_{\underline{x}} Q(\underline{x},y_0)\leq \deg_{\underline{x}} r(\underline{x})$. But, the resultant is a  determinant of order at most $d_y+d_{0,y}\leq 2\,d_y$ whose entries are  polynomials in $K\left[\underline{x}\right]$ of degree at most $\max\{d_x,d_{0,x}\}= d_x$. So, $ \deg_{\underline{x}} r(\underline{x})\leq 2\,d_xd_y$. Hence, one has that: $\mathrm{ord}_{\underline{x}} Q(\underline{x},y_0)\leq  2\,d_xd_y$.
\end{demo}

\begin{lemma}\label{lemme:profondeur}
Let $\mathcal{F}''\subsetneq \mathcal{F}$. If $y_0$ is not algebraic relatively to $(\mathcal{F}'',\mathcal{G})$, we denote $l:=|\mathcal{F}''|$ and $\underline{p}:=\min_{\leq_{\textrm{grlex}}}\left\{\underline{k}_l\ |\ Q_{\underline{K},\mathcal{F}''}\left(c_{(0,\ldots,0,1)},c_{(0,\ldots,1,0)},\ldots\right)\neq 0,\ \underline{K}=(\underline{k}_1,\ldots,\underline{k}_l)\right\}$. Then, for any polynomial $Q(\underline{x},y)=\displaystyle\sum_{(\underline{i},j)\in\mathcal{F}''\cup\mathcal{G}}b_{\underline{i},j} \underline{x}^{\underline{i}}y^j$  with $b_{\underline{i},j}\neq 0$ for some $(\underline{i},j)\in\mathcal{F}''$, we have:
$$\mathrm{ord}_{\underline{x}}Q(\underline{x},y_0)\leq |\underline{p}|\leq 2\,d_xd_y,$$
and the value $\underline{p}$ is reached for a certain polynomial $Q_0$. 
\end{lemma}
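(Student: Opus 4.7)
The plan is to exploit the dictionary established in Lemma~\ref{lemme:wilcz} between the evaluation of a polynomial at $y_0$ and the action of the reduced Wilczynski matrix. Indeed, for any polynomial $Q=\displaystyle\sum_{(\underline{i},j)\in\mathcal{F}''\cup\mathcal{G}}b_{\underline{i},j}\underline{x}^{\underline{i}}y^j$, the coefficients of $Q(\underline{x},y_0)$ at monomials $\underline{x}^{\underline{n}}$ with $\underline{n}$ not among the indices of $\mathcal{G}$ are precisely the components of $M^{red}_{\mathcal{F}'',\mathcal{G}}\cdot \underline{b}'$, where $\underline{b}':=(b_{\underline{i},j})_{(\underline{i},j)\in\mathcal{F}''}$; the remaining components, at $\mathcal{G}$-indexed monomials, can always be cancelled by an appropriate choice of the $b_{\underline{n},0}$'s dictated by formula~\eqref{equ:terme-cst}.

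First, to establish the inequality $\mathrm{ord}_{\underline{x}}Q(\underline{x},y_0)\leq |\underline{p}|$, I would let $\underline{K}^*=(\underline{k}_1^*,\ldots,\underline{k}_l^*)$ be a sequence realizing the minimum $\underline{p}=\underline{k}_l^*$, so that the corresponding $l\times l$ submatrix of $M^{red}_{\mathcal{F}'',\mathcal{G}}$ is invertible. Since $\underline{b}'\neq 0$, the restriction of $M^{red}_{\mathcal{F}'',\mathcal{G}}\cdot\underline{b}'$ to the rows $\underline{K}^*$ cannot vanish, so some $\underline{k}_i^*$ supports a nonzero coefficient of $Q(\underline{x},y_0)$; as the graded lexicographic order refines the total degree, $|\underline{k}_i^*|\leq |\underline{k}_l^*|=|\underline{p}|$, which gives the claim.

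Next I would construct a polynomial $Q_0$ reaching equality. Consider the submatrix $M^*$ of $M^{red}_{\mathcal{F}'',\mathcal{G}}$ consisting of the rows indexed by $\underline{n}<_{\mathrm{grlex}}\underline{p}$. By the minimality in the definition of $\underline{p}$, every $l\times l$ minor of $M^*$ vanishes, so $\mathrm{rank}(M^*)<l$; pick any nonzero $\underline{b}'\in\ker M^*$ and form $Q_0$ by completing $\underline{b}'$ with the values of $b_{\underline{n},0}$ prescribed by~\eqref{equ:terme-cst}. By construction, the coefficient of $\underline{x}^{\underline{n}}$ in $Q_0(\underline{x},y_0)$ vanishes for every $\underline{n}<_{\mathrm{grlex}}\underline{p}$. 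Moreover, the coefficient at $\underline{x}^{\underline{p}}$ must be nonzero: otherwise the invertible submatrix on rows $\underline{K}^*$ would annihilate $\underline{b}'$, contradicting its invertibility. Hence $\mathrm{ord}_{\underline{x}}Q_0(\underline{x},y_0)=|\underline{p}|$.

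Finally, the bound $|\underline{p}|\leq 2d_xd_y$ follows by applying Lemma~\ref{lemme:ordreQ} to $Q_0$: its support lies in $\mathcal{F}\cup\mathcal{G}$, so $\deg_{\underline{x}}Q_0\leq d_x$ and $\deg_yQ_0\leq d_y$, while $Q_0(\underline{x},y_0)\neq 0$ by construction (using also the standing assumption, implicit in the context of Theorem~\ref{theo:wilc}, that $y_0$ is algebraic). The delicate point of the proof is the use of the minimality of $\underline{p}$ to ensure that the kernel vector $\underline{b}'$ of $M^*$ is not simultaneously in the kernel of the augmented submatrix on rows $\underline{K}^*$; everything else is routine linear algebra combined with the matrix-evaluation dictionary.
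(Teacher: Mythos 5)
Your proof is correct and follows essentially the same route as the paper: both parts rest on the same two consequences of the minimality of $\underline{p}$ (all order-$l$ minors with rows $<_{\mathrm{grlex}}\underline{p}$ vanish, while some order-$l$ minor with last row $\underline{p}$ does not), combined with the dictionary of Lemma \ref{lemme:wilcz} and the bound of Lemma \ref{lemme:ordreQ}. The only cosmetic difference is that you obtain $\mathrm{ord}_{\underline{x}}Q_0(\underline{x},y_0)=|\underline{p}|$ directly by showing the coefficient at $\underline{x}^{\underline{p}}$ is nonzero, whereas the paper deduces it by combining the inequality $\geq|\underline{p}|$ from the construction with the general upper bound $\leq|\underline{p}|$.
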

\begin{demo}
Denote by $\underline{\tilde{p}}$ the predecessor of $\underline{p}$ for $\leq_{\textrm{grlex}}$. By the definition of $\underline{p}$, for any  $\underline{K}=(\underline{k}_1,\ldots,\underline{k}_l)$ with $\underline{k}_l<_{\textrm{grlex}}\underline{p}$, we have that $Q_{\underline{K},\mathcal{F}''}\left(c_{(0,\ldots,0,1)},c_{(0,\ldots,1,0)},\ldots\right)=0$.  This means that the rank of the column vectors $V_{\underline{i},j,\underline{\tilde{p}}}$ that are the restrictions of those of $M_{\mathcal{F}'',\mathcal{G}}^{red}$  up to the row $\underline{\tilde{p}}$, is less than $l=|\mathcal{F}''|$. There are coefficients $(a_{\underline{i},j})_{(\underline{i},j)\in \mathcal{F}''\cup\mathcal{G}}$ not all zero such that $\displaystyle\sum_{(\underline{i},j)\in\mathcal{F}''}a_{\underline{i},j}V_{\underline{i},j,\underline{\tilde{p}}}=0$. By computing the coefficients $a_{\underline{n},0}$ for $(\underline{n},0)\in\mathcal{G}$  via Relations (\ref{equ:terme-cst}):
\begin{equation}\label{equ:CI3}
a_{\underline{n},0}=-\displaystyle\sum_{(\underline{i},j)\in\mathcal{F}'', \underline{n}>\underline{i}} a_{\underline{i},j}c_{\underline{n}-\underline{i}}^{(j)},
\end{equation} 
we obtain the vanishing of the first terms of $Q_0(\underline{x},y_0):=\displaystyle\sum_{(\underline{i},j)\in\mathcal{F}''\cup\mathcal{G}} a_{\underline{i},j} \underline{x}^{\underline{i}}(y_0)^j$ up to  $\tilde{\underline{p}}$.
 Thus, $\mathrm{ord}_{\underline{x}}Q_0(\underline{x},y_0)\geq |\underline{p}|$, and so $|\underline{p}|\leq 2\,d_xd_y$. On the other hand, again by the definition of $\underline{p}$, the column vectors up to the row $\underline{p}$ are, in turn, of rank $l=|\mathcal{F}''|$. 
 This means that the rank of the matrix $M_{\mathcal{F}'',\mathcal{G},\underline{p}}^{red}$ consisting of the first rows of $M_{\mathcal{F}'',\mathcal{G}}^{red}$ up to  $\underline{p}$ is $l$. 
Thus, for any nonzero vector $(b_{\underline{i},j})_{(\underline{i},j)\in\mathcal{F}''}$, we have:
$$M_{\mathcal{F}'',\mathcal{G},\underline{p}}^{red}\,\cdot\, (b_{\underline{i},j})_{(\underline{i},j)\in\mathcal{F}''}\neq 0.$$
But the components of this nonzero vector  are exactly the coefficients $e_{\underline{k}}$, $(\underline{k},0)\notin \mathcal{G}$ and $\underline{k}\leq_{\mathrm{grlex}} \underline{p}$, of the expansion of $\displaystyle\sum_{(\underline{i},j)\in\mathcal{F}''}b_{\underline{i},j}\,\underline{x}^{\underline{i}}\,(y_0)^j$. 
Now, these terms of the latter series do not overlap with the terms of $\displaystyle\sum_{(\underline{i},0)\in\mathcal{G}}b_{\underline{i},0}\,\underline{x}^{\underline{i}}$. Therefore, for a given polynomial $Q(\underline{x},y)=\displaystyle\sum_{(\underline{i},j)\in\mathcal{F}''\cup\mathcal{G}} b_{\underline{i},j}\underline{x}^{\underline{i}}y^j$ with $b_{\underline{i},j}\neq 0$ for some $(\underline{i},j)\in\mathcal{F}''$, the series $Q(\underline{x},y_0)$ has a nonzero term $e_{\underline{k}}$ with $\underline{k}\leq_{\mathrm{grlex}} \underline{p}$, $(\underline{k},0)\notin\mathcal{G}$. Hence, $\mathrm{ord}_xQ(\underline{x},y_0)\leq |\underline{p}|$.
\end{demo}

We achieve the proof of Theorem \ref{theo:wilc} via Lemmas  \ref{lemme:ordreQ} and \ref{lemme:profondeur} by considering for a given algebraic series $y_0$ a family  $\mathcal{F}'\subset \mathcal{F}$ minimal among the families such that  $y_0$ is algebraic relatively to $(\mathcal{F}',\mathcal{G})$.  We consider an associated nonzero Wilczynski polynomial $Q_{\underline{K}_0,\underline{I}_0}$  as in (\ref{equ:nonzerominor}) with  $\underline{n}_{y_0}$ as defined in Formula (\ref{equ:N}) minimal.   Taking $\mathcal{F}''=\underline{I}_0$, Lemma \ref{lemme:profondeur} applies and $\underline{n}_{y_0}=\underline{p}$. So $|\underline{n}_{y_0}|\leq |\underline{p}|\leq N=2\,d_xd_y$.\\
Recall that the coefficients $a_{\underline{i},j}$ constructed in the first part of the proof are homogenous polynomials in $\mathbb{Z}[C_{(0,\ldots,0,1)},\ldots,C_{\underline{n}_{y_0}}]\subseteq\mathbb{Z}[C_{(0,\ldots,0,1)},\ldots,C_{(N,0,\ldots,0)}]$. To complete the proof of Theorem \ref{theo:wilc}, let us describe a finite set $\Lambda$ which enumerates all possible reconstruction formulas. Let $M_N$ be the matrix obtained from $M_{\mathcal{F},\mathcal{G}}^{red}$ by taking its first rows indexed by $\underline{n}$ such that $|\underline{n}|\leq 2d_xd_y$. Set $D:=\displaystyle\binom{2d_xd_y+r}{r}- |\mathcal{G}|$. So $M_N$ is a $D\times |\mathcal{F}|$-matrix. Let $\nu$ be the number of minors of order less or equal to $\min\{D\,,\, |\mathcal{F}|-1\}$ of $M_N$. We fix a finite set $\Lambda$ of cardinality $|\mathcal{F}|+\nu$. Its first $|\mathcal{F}|$ elements are the indices of reconstruction formulas (\ref{equ:reconstruction}) as built in the first part of the proof of Theorem \ref{theo:wilc} (case $m=|\mathcal{F}'|=1$). The other $\nu$ elements are used to enumerate reconstruction formulas (\ref{equ:reconstruction}) in the case described in the second part of the proof (case $m=|\mathcal{F}'|\geq 2$). 
 \end{demo}

\noindent\textbf{Construction of the coefficients $a_{\underline{i},j}^{(\lambda)}$ for a given $y_0$}.\\
Let $y_0$ be algebraic relatively to $(\mathcal{F},\mathcal{G})$ as in Definition \ref{defi:alg-relative}. Let $N=2\,d_xd_y$ and $D:=\displaystyle\binom{2d_xd_y+r}{r}- |\mathcal{G}|$ as in Theorem \ref{theo:wilc} and its proof. Recall that  $M_N$ denotes the matrix consisting in the $D$ first rows of $M_{\mathcal{F},\mathcal{G}}^{red}$. Let $\rho$ be the rank of $M_N$, and $\theta:=\rho+1$. The minors of $M_N$ of order $\theta$ are all zero and there exists a minor of order $\theta-1=\rho$ which is nonzero. There are two cases. If $\rho=0$, we choose  $(\underline{i},j)\in\mathcal{F}$ and we fix the coefficients $a_{\underline{i},j}:=1$ and $a_{\underline{l},m}=0$ for $(\underline{l},m)\in\mathcal{F}$, $(\underline{l},m)\neq (\underline{i},j)$. Then we derive the  coefficients $a_{\underline{i},0}$ for $(\underline{i},0)\in\mathcal{G}$ from Relations (\ref{equ:terme-cst}). The polynomials $P$ thus obtained are all annihilators of $y_0$. \\
If $\rho\geq 1$, we consider all the Wilczynski polynomials $Q_{\underline{K},\underline{I}}$ of order $\rho$ that do not vanish when evaluated at $c_{(0,\ldots,0,1)},\ldots,c_{(N,0,\ldots,0)}$. Each of them allows to reconstruct  a family of coefficients $a_{\underline{i},j}^{(\lambda)}$, $(\underline{i},j)\in\mathcal{F}$ as described after (\ref{equ:cramer2}), and subsequently  $a_{\underline{i},0}^{(\lambda)}$, $(\underline{i},0)\in\mathcal{G}$ via Relations (\ref{equ:terme-cst}). The corresponding polynomials $P^{(\lambda)}$ are annihilators of $y_0$ if and only if $$\mathrm{ord}_{\underline{x}}\  P^{(\lambda)}\left(\underline{x},\, \displaystyle\sum_{\underline{k}=(0,\ldots,0,1)}^{(N,0,\ldots,0)}c_{\underline{k}}\underline{x}^{\underline{k}}\right)\ >\ N.$$

\begin{remark}\label{rem:Lambda}
With the hypothesis and notations of Theorem \ref{theo:wilc} and its proof, let us denote $f:=|\mathcal{F}|\leq \displaystyle\binom{d_x+r}{r}\, d_y$ and $g:=\min\{D,f-1\}$. Then $|\Lambda|$ is bounded by $f+\displaystyle\sum_{t=1}^g{{f-1}\choose{t}} {{D}\choose{t}}$, which is itself roughly bounded by $f+(2^{f-1}-1)(2^D-1)$.
\end{remark}

\begin{ex}\label{ex:wilcz2}
We resume Example \ref{ex:wilc}, and note that, for\\ $\underline{I}=((0,2,1),(2,2,1),(0,2,2),(2,2,2))$ and $\underline{K}=((0,3),(0,4),(2,3),(2,4))$, we have that:
$$Q_{\underline{K},\underline{I}}=
\left|\begin{array}{cccc}c_{{0,1}}  & 0 &0&0\\
c_{{0,2}} & 0&{c_{{0,1}}}^2&0 \\
c_{{2,1}}&c_{{0,1}}&   2\,c_{{0,1}}c_{{2,0}}+2\,c_{{1,0}}c_{{1,1}}  &0\\
c_{{2,2}}&c_{{0,2}}&2\,c_{{1,0}}c_{{1,2}}+{c_{{1,1}}}^{2}+2\,c_{{0,2}}c_{{2,0}}+2\,c_{{0,1}}c_{{2,1}}&{c_{{0,1}}}^2
\end{array}\right|=-{c_{0,1}}^6\neq 0.$$
So we set $a_{0,0,2}:=(-1)^3(-{c_{0,1}}^6)={c_{0,1}}^6$ and, applying  Cramer's rule:\\
$\begin{array}{l}
a_{0,2,1}:=\\
\ \ \ \ (-1)^1
\left|\begin{array}{cccc} 0&2\,c_{{0,1}}c_{{0,2}} &0&0\\
0& 2\,c_{{0,1}}c_{{0,3}}+{c_{{0,2}}}^{2} &{c_{{0,1}}}^2&0 \\
c_{{0,1}} & c_{{2,3}}^{(2)}  &  2\,c_{{0,1}}c_{{2,0}}+2\,c_{{1,0}}c_{{1,1}}   &0\\
c_{{0,2}}&c_{{2,4}}^{(2)}&  2\,c_{{1,0}}c_{{1,2}}+{c_{{1,1}}}^{2}+2\,c_{{0,2}}c_{{2,0}}+2\,c_{{0,1}}c_{{2,1}}   &{c_{{0,1}}}^2
\end{array}\right|\\
\ \ \ \ \ \ \ \ \ \ =-2{c_{0, 1}}^6 c_{0, 2} \vspace{3pt}\\
a_{2,2,1}:= \\
\ \ \ \ (-1)^2
\left|\begin{array}{cccc} c_{{0,1}}&2\,c_{{0,1}}c_{{0,2}} &0&0\\
c_{{0,2}}& 2\,c_{{0,1}}c_{{0,3}}+{c_{{0,2}}}^{2} &{c_{{0,1}}}^2&0 \\
c_{{2,1}} & c_{{2,3}}^{(2)}  &  2\,c_{{0,1}}c_{{2,0}}+2\,c_{{1,0}}c_{{1,1}}   &0\\
c_{{2,2}}&c_{{2,4}}^{(2)}&  2\,c_{{1,0}}c_{{1,2}}+{c_{{1,1}}}^{2}+2\,c_{{0,2}}c_{{2,0}}+2\,c_{{0,1}}c_{{2,1}}   &{c_{{0,1}}}^2
\end{array}\right|\\
\ \ \ \ \ \ \ \ \ \ =-2\,{c_{{0,1}}}^{3} \left( -2\,c_{{0,1}}c_{{0,3}}c_{{1,0}}c_{{1,1}}-{c
_{{0,1}}}^{2}c_{{0,3}}c_{{2,0}}+{c_{{0,2}}}^{2}c_{{1,0}}c_{{1,1}}+{c_{
{0,2}}}^{2}c_{{0,1}}c_{{2,0}}\right.\\
\ \ \  \ \ \ \ \ \ \ \ \ \ \ \ \  \left.+{c_{{0,1}}}^{2}c_{{1,1}}c_{{1,2}}+{c_{{0
,1}}}^{2}c_{{1,0}}c_{{1,3}}+{c_{{0,1}}}^{3}c_{{2,2}} \right) 
\vspace{3pt}\\
 a_{0,2,2}:=
  (-1)^4 \left|\begin{array}{cccc}c_{{0,1}}  & 0 &2\,c_{{0,1}}c_{{0,2}}&0\\
c_{{0,2}} & 0&2\,c_{{0,1}}c_{{0,3}}+{c_{{0,2}}}^{2}&0 \\
c_{{2,1}}&c_{{0,1}}&   c_{{2,3}}^{(2)} &0\\
c_{{2,2}}&c_{{0,2}}& c_{{2,4}}^{(2)}&{c_{{0,1}}}^2
\end{array}\right|=- {c_{0, 1}}^4\left(2c_{0, 1}c_{0, 3}-{c_{0, 2}}^2\right)
\end{array}\\
\begin{array}{l}
 a_{2,2,2}:=\\
\ \ \ \ (-1)^5 \left|\begin{array}{cccc}c_{{0,1}}  & 0 &0&2\,c_{{0,1}}c_{{0,2}}\\
c_{{0,2}} & 0&{c_{{0,1}}}^2&2\,c_{{0,1}}c_{{0,3}}+{c_{{0,2}}}^{2} \\
c_{{2,1}}&c_{{0,1}}&   2\,c_{{0,1}}c_{{2,0}}+2\,c_{{1,0}}c_{{1,1}}  &c_{{2,3}}^{(2)}\\
c_{{2,2}}&c_{{0,2}}&2\,c_{{1,0}}c_{{1,2}}+{c_{{1,1}}}^{2}+2\,c_{{0,2}}c_{{2,0}}+2\,c_{{0,1}}c_{{2,1}}&c_{{2,4}}^{(2)}
\end{array}\right|\\
\ \ \ \ \ \ \ \ \ \ =-c_{{0,1}} \left( 2\,{c_{{0,1}}}^{4}c_{{2,3}}+2\,{c_{{0,1}}}^{3}c_{{1,0}}c_{{1,4}}+2\ {c_{{0,1}}}^{3}c_{{2,0}}c_{{0,4}} +{c_{{0,1}}}^{3}{c_{{1,2}}}^{2}+2\,{c_{{0,1}}}^{3}c_{{1,1}}c_{{1,3}}\right. \\
\ \ \ \ \ \ \ \ \ \ \ \  \left. -2\,{c_{{0,1}}}^{3}c_{{0,3}}c_{{2,1}}-2\,{c_{{0,1}}}^{2}c_{{0,3}}c_{{0,2}}c_{{2,0}}-2\,{c_{{0,
1}}}^{2}c_{{0,3}}{c_{{1,1}}}^{2}-4\,{c_{{0,1}}}^{2}c_{{0,3}}c_{{1,0}}c_{{1,2}}\right.\\
\ \ \ \ \ \ \ \ \ \ \ \ \left.+c_{{0,1}}{c_{{0,2}}}^{2}{c_{{1,1}}}^{2}+2\,c_{{0,1}}{c_{{0,2}}}^{2}c_{{1,0}}c_{{1,2}}+2\,{c_{{0,2}}}^{2}{c_{{0,1}}}^{2}c_{{2,1}}+4\,c_{{0,2}}c_{{0,1}}c_{{0,3}}c_{{1,0}}c_{{1,1}}\right.\\
\ \ \ \ \ \ \ \ \ \ \ \ \left.-2\,{c_{{0,2}}}^{3}c_{{1,0}}c_{{1,1}} -2\,c_{{0,2}}{c_{{0,1}}}^{2}c_{{1,1}}c_{{1,2}}-2\,c_{{0,2}}{c_{{0,1}}}^{2}c_{{1,0}}c_{{1,3}} -2\,{c_{{0,1}}}^{3}c_{{0,2}}c_{{2,
2}} \right) .
\end{array} $\\
We deduce from Formulas (\ref{equ:terme-cst}) that:
$$\left\{\begin{array}{lcl}
a_{0,2,0}&=&-a_{0,2,1}. 0-a_{2,0,1}. 0-a_{0,0,2}. {c_{{0,1}}}^2-a_{0,2,2}. 0-a_{2,0,2}. 0\ =\ -{c_{{0,1}}}^8\\
a_{2,2,0}&=&-a_{0,2,1}. c_{{2,0}}-a_{2,0,1}. 0-a_{0,0,2}. \left( 2\,c_{{1,0}}c_{{1,2}}+{c_{{1,1}}}^{2}+2\,c_{{0,2}}c_{{2,0}}+2\,c_{{0,1}}c_{{2,1}}  \right)\\
&&-a_{0,2,2}. {c_{{1,0}}}^{2}-a_{2,0,2}. 0\\
&=&-{c_{{0,1}}}^{4} \left( {c_{{0,1}}}^{2}{c_{{1,1}}}^{2}+2\,{c_{{0,1}}}^
{2}c_{{1,0}}c_{{1,2}}+2\,{c_{{0,1}}}^{3}c_{{2,1}}-2\,c_{{0,1}}{c_{{1,0
}}}^{2}c_{{0,3}}+{c_{{1,0}}}^{2}{c_{{0,2}}}^{2} \right). 
\end{array}\right. $$
Since the series $y_0=\displaystyle\sum_{\underline{n}\geq_{\mathrm{grlex}} 0,1} c_{\underline{n}}\underline{x}^{\underline{n}}\in K[[\underline{x}]]$,  $c_{0,1}\neq 0$, is algebraic relatively to\\ $\mathcal{F}=\left\{(0,2,1),(2,2,1),(0,0,2),(0,2,2),(2,2,2)\right\}$  and $\mathcal{G}=\left\{(0,2,0),(2,2,0)\right\}$, we may apply  the conditions (\ref{equ:exemple}). Therefore, we obtain a vanishing polynomial of $y_0$ of the form:
$$\begin{array}{lcl}P(\underline{x},y)&=&-{c_{{0,1}}}^8{x_2}^2-2{c_{{0,1}}}^7c_{2, 1}{x_1}^{2}{x_2}^2+\left(-2{c_{{0,1}}}^6 c_{0, 2}{x_2}^2-2{c_{{0,1}}}^6c_{2, 2}{x_1}^2{x_2}^2\right)y\\
&&+ \left({c_{{0,1}}}^6-{c_{{0,1}}}^{4} \left( 2\,c_{{0,1}}c_{{0,3}}-{c_{{0,2}}}^{2} \right){x_2}^2-c_{{0,1}} \left( 2\,{c_{{0,1}}}^{4}c_{{2,3}}-2\,{c_{{0,1}}}^{3}c_{{0,3}}c_{{2,1}}\right.\right.\\
&&\left.\left.  \ \ \ \ \ +2\,{c_{{0,2}}}^{2}{c_{{0,1}}}^{2}c_{{2,1}}- 2\,{c_{{0,1}}}^{3}c_{{0,2}}c_{{2,2}} \right) 
{x_1}^2{x_2}^2\right)y^2\\
&=&{c_{{0,1}}}^{3} \left[ -{c_{{0,1}}}^{5}{x_{{2}}}^{2}-2\,{c_{{0,1}}}^{4
}c_{{2,1}}{x_{{1}}}^{2}{x_{{2}}}^{2}+\left(-2\,{c_{{0,1}}}^{3}c_{{0,2}}{x_{{
2}}}^{2}-2\,{c_{{0,1}}}^{3}c_{{2,2}}{x_{{1}}}^{2}{x_{{2}}}^{2}\right)y\right.\\
&&\left.+\left({c_{{0
,1}}}^{3}+\left(c_{{0,1}}{c_{{0,2}}}^{2}-2\,{c_{{0,1}}}^{2}c_{{0,3}}\right){x_{{2}}}^{2}\right.\right.\\ 
&&\left.\left.\ \  \ \ \ +\left(2\,c_{{0,1}}c_{{0,3}}c_{{2,1}}-2\,{c_{{0,1}}}^{2}c_{{2,3}}-2\,{c_{{0,2}}
}^{2}c_{{2,1}}+2\,c_{{0,1}}c_{{0,2}}c_{{2,2}}\right){x_{{1}}}^{2}{x_{{2}}}^{2}\right){y}^{2} \right] .\end{array}$$
\end{ex}

\section{A generalization of the Flajolet-Soria Formula.}\label{section:hensel}
Let us assume from now on that $K$ has characteristic zero. In the monovariate context, let $Q(x,y)=\displaystyle\sum_{i,j}a_{i,j}x^iy^j\,\in K[x,y]$ with $Q(0,0)=\displaystyle\frac{\partial Q}{\partial y}(0,0)=0$ and $Q(x,0)\neq 0$. 
In \cite{flajolet-soria:coeff-alg-series}, P. Flajolet and M. Soria give the following formula for the coefficients of the unique formal solution $y_0=\displaystyle\sum_{n\geq 1}c_nx^n$ of the implicit equation $y=Q(x,y)$: 

\begin{theo}[Flajolet-Soria's Formula \cite{flajolet-soria:coeff-alg-series}]\label{theo:formule-FS0}
$$ c_n=\displaystyle\sum_{m=1}^{2n-1}\frac{1}{m}\displaystyle\sum_{|\underline{k}|=m,\ ||\underline{k}||=m-1,\ G(\underline{k})=n}\frac{m!}{\prod_{i,j}k_{i,j}!}\prod_{i,j}a_{i,j}^{k_{i,j}},$$
where $\underline{k}=\displaystyle(k_{i,j})_{i,j}$, $\ |\underline{k}|=\displaystyle\sum_{i,j}k_{i,j}$,  $\ ||\underline{k}|| = \displaystyle\sum_{i,j}j\, k_{i,j}$ and $\ G(\underline{k}) = \displaystyle\sum_{i,j}i\, k_{i,j}$.
\end{theo}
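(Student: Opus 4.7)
The plan is to derive this formula from the classical Lagrange--B\"urmann inversion formula applied to an auxiliary equation parameterized by an extra indeterminate.

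First I introduce a new variable $t$ and consider the equation $Y = t\, Q(x, Y)$ in the ring $K[x][[t,Y]]$. Since the coefficient of $Y$ in $Y - t\,Q(x,Y)$ at $(t,Y)=(0,0)$ is equal to $1$, Hensel's lemma produces a unique formal series $Y(t,x)\in K[x][[t]]$ with $Y(0,x)=0$ solving this equation. Applying the Lagrange--B\"urmann formula to $Y=t\phi(Y)$ with $\phi(Y):=Q(x,Y)$, one gets, for every $m\ge 1$,
$$[t^m]\,Y(t,x) \;=\; \frac{1}{m}\,[Y^{m-1}]\,Q(x,Y)^m.$$

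Next I expand $Q(x,Y)^m$ by the multinomial theorem, using Notation~\ref{nota:FS} and writing $\underline{k}=(k_{i,j})_{i,j}$:
$$Q(x,Y)^m \;=\; \displaystyle\sum_{|\underline{k}|=m}\frac{m!}{\underline{k}!}\,\prod_{i,j} a_{i,j}^{k_{i,j}}\; x^{G(\underline{k})}\,Y^{\|\underline{k}\|}.$$
Extracting the coefficient of $x^n Y^{m-1}$ imposes the constraints $G(\underline{k})=n$ and $\|\underline{k}\|=m-1$, yielding
$$[t^m][x^n]\,Y(t,x) \;=\; \frac{1}{m}\displaystyle\sum_{\substack{|\underline{k}|=m,\ \|\underline{k}\|=m-1\\ G(\underline{k})=n}}\frac{m!}{\underline{k}!}\prod_{i,j}a_{i,j}^{k_{i,j}}.$$

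It then remains to set $t=1$. I claim that the specialization $\sum_{m\ge 1}[t^m]Y(t,x)$ is $x$-adically well defined and recovers $y_0(x)$. This is the delicate step and is precisely where the hypotheses $Q(0,0)=0$ and $\frac{\partial Q}{\partial y}(0,0)=0$, i.e.\ $a_{0,0}=a_{0,1}=0$, intervene. Indeed, a nonzero contribution to the above sum requires $k_{i,j}>0$ only when $a_{i,j}\neq 0$, hence only when $(i,j)\notin\{(0,0),(0,1)\}$; so the only pair $(i,j)$ with $i+j=1$ capable of contributing is $(1,0)$, while every other contributing pair satisfies $i+j\ge 2$. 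Setting $\alpha:=k_{1,0}$, the identity $\sum_{i,j}(i+j)k_{i,j}=n+m-1$ yields $\alpha+2(m-\alpha)\le n+m-1$, i.e.\ $m\le n-1+\alpha$; combined with $\alpha\le \sum_{i,j}i\,k_{i,j}=n$, this gives the sharp bound $m\le 2n-1$.

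Uniformity of this bound in $\underline{k}$ for fixed $n$ legitimates the substitution $t=1$, producing $Y(1,x)\in K[[x]]$ which satisfies $Y(1,x)=Q(x,Y(1,x))$; by $x$-adic uniqueness of the solution of $y=Q(x,y)$ with $y(0)=0$, this series equals $y_0(x)$, and comparing coefficients of $x^n$ gives the announced formula. The main obstacle is precisely this convergence/substitution argument at $t=1$, which rests essentially on the structural hypotheses $a_{0,0}=a_{0,1}=0$; everything else is a formal manipulation.
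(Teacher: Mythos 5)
Your proof is correct, and it follows essentially the same route the paper takes for its generalization (Theorem \ref{theo:formule-FS}): Lagrange--B\"urmann inversion applied to $y/Q(x,y)$ (your parameter $t$ plays exactly the role of the inversion variable there), multinomial expansion of $Q^m$, a uniform bound on $m$ for fixed $n$ to justify the specialization at $1$, and identification with the unique Henselian solution. The paper states Theorem \ref{theo:formule-FS0} itself without proof, citing Flajolet--Soria, but your argument, including the derivation of the sharp bound $m\le 2n-1$ from $a_{0,0}=a_{0,1}=0$, is sound.
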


Note that in the particular case where the coefficients of $Q$ verify $a_{0,j}=0$ for all $j$, one has $m\leq n$ in the summation. 

One can derive immediately from  Theorems 3.5 and 3.6 in \cite{sokal:implicit-function}   a multivariate version of the Flajolet-Soria Formula in the case where $Q(\underline{x},y)\in K\left[\underline{x},y\right]$. The purpose of the present section is to generalize the latter result to the case where $Q\left(\underline{x},y\right)\in K\left[x_1,x_1^{-1},\ldots,x_r,x_r^{-1}\right]\left[y\right]$.\\


We will need a special version of Hensel's Lemma for multivariate power series elements of  $K((x_1^{\mathbb{Z}},\ldots,x_r^\mathbb{Z}))^{\textrm{grlex}}$. Recall that the latter denotes the field of generalized series $\left(K\left(\left(X^{\mathbb{Z}^r}\right)\right)^{\mathrm{grlex}},\, w\right)$  where $w$ is the graded lexicographic valuation as described in Section \ref{section:preliminaries}. Generalized series fields are known to be Henselian \cite[Theorem 4.1.3 and Remark 4.1.8]{engler-prestel:valued-fields}. For the convenience of the reader, we give a short proof in our particular context.

\begin{defi}\label{defi:equ-hensel-red}
We call \textbf{strongly reduced Henselian equation} any equation of the following type:
$$y=Q\left(\underline{x},y\right)\ \textrm{ with }\ Q\left(\underline{x},y\right)\in K\left[x_1,x_1^{-1},\ldots,x_r,x_r^{-1}\right]\left[y\right],$$
such that $w\left(Q\left(\underline{x},y\right)\right)>_{\textrm{grlex}}\underline{0}$ and $Q\left(\underline{x},0\right)\nequiv 0$.
\end{defi}

\begin{theo}[Hensel's lemma]\label{theo:hensel}
Any strongly reduced Henselian equation admits a unique solution $y_0= \displaystyle\sum_{\underline{n}>_{\textrm{grlex}}\underline{0}}c_{\underline{n}}\underline{x}^{\underline{n}}\in  K((x_1^{\mathbb{Z}},\ldots,x_r^\mathbb{Z}))^{\mathrm{grlex}}$. 
\end{theo}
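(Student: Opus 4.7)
The plan is to establish existence by Picard iteration, setting $y^{(0)} := 0$ and $y^{(m+1)} := Q(\underline{x}, y^{(m)})$. Writing $Q(\underline{x}, y) = \sum_j q_j(\underline{x}) y^j$ with $q_j \in K[x_1^{\pm 1}, \ldots, x_r^{\pm 1}]$, each $y^{(m)}$ is itself a Laurent polynomial. Setting $\underline{\alpha} := w(Q) = \min_j w(q_j) >_{\mathrm{grlex}} \underline{0}$, I would prove by induction on $m$ that $w(y^{(m+1)} - y^{(m)}) \geq_{\mathrm{grlex}} (m+1)\underline{\alpha}$. The inductive step relies on the standard factorization $Q(\underline{x}, y) - Q(\underline{x}, z) = (y - z) \cdot R(\underline{x}, y, z)$ with $R = \sum_{j \geq 1} q_j \sum_{k=0}^{j-1} y^{j-1-k} z^k$, whose valuation is at least $\underline{\alpha}$ as soon as $y, z$ have grlex-non-negative valuation; the base case $w(y^{(1)} - y^{(0)}) = w(q_0) \geq_{\mathrm{grlex}} \underline{\alpha}$ is immediate.

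The main obstacle is proving that the formal sum $y_0 := \sum_{m \geq 0} (y^{(m+1)} - y^{(m)})$ defines a genuine element of $K((x_1^{\mathbb{Z}}, \ldots, x_r^{\mathbb{Z}}))^{\mathrm{grlex}}$. The subtlety is that $\underline{\alpha}$ may be grlex-positive yet satisfy $|\underline{\alpha}| = 0$, in which case the valuation bound $w(y^{(m+1)} - y^{(m)}) \geq_{\mathrm{grlex}} (m+1)\underline{\alpha}$ does not by itself guarantee that only finitely many $m$ contribute to any fixed monomial. To handle this, set $\mathcal{S} := \bigcup_j \mathrm{supp}(q_j)$, a finite subset of $\{\underline{n} >_{\mathrm{grlex}} \underline{0}\}$; the support of each $y^{(m)}$ lies in the $\mathbb{N}$-submonoid $\Sigma := \langle \mathcal{S} \rangle_{\mathbb{N}} \subset \mathbb{Z}^r$. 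I would prove, by induction on $r$ and partitioning $\mathcal{S}$ according to $|\cdot|$-degree, that $\Sigma$ is well-ordered by $\leq_{\mathrm{grlex}}$, and moreover that for every $\underline{n} \in \Sigma$ the set of $\mathbb{N}$-representations $\underline{n} = \sum a_s s$ is finite (the zero-degree slice reduces to an equation in $(r-1)$ variables of the same type). A separate induction on $m$ shows that each monomial of $y^{(m+1)} - y^{(m)}$ corresponds to an $\mathcal{S}$-representation using at least $m+1$ summands, so for fixed $\underline{n}$ only boundedly many $m$ can contribute; hence $y_0$ is well-defined with support contained in $\Sigma$. Passing to the limit in $y^{(m+1)} = Q(\underline{x}, y^{(m)})$ (polynomial evaluation being continuous for this valuation) then yields $y_0 = Q(\underline{x}, y_0)$.

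For uniqueness, if $y_0, y_0'$ are two solutions with grlex-positive valuation, the same factorization gives $y_0 - y_0' = (y_0 - y_0') \cdot R(\underline{x}, y_0, y_0')$ with $w(R) \geq_{\mathrm{grlex}} \underline{\alpha} >_{\mathrm{grlex}} \underline{0}$; if $y_0 \neq y_0'$ this forces $w(y_0 - y_0') \geq_{\mathrm{grlex}} w(y_0 - y_0') + \underline{\alpha} >_{\mathrm{grlex}} w(y_0 - y_0')$, a contradiction. I expect the combinatorial step establishing the well-ordering of $\Sigma$ and the finiteness of $\mathcal{S}$-representations to be the delicate part; once these are in hand, the iteration and uniqueness arguments proceed along classical lines.
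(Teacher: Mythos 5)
Your proof is correct in substance but follows a genuinely different route from the paper's. The paper does not iterate: it first proves a characterization lemma (a solution is exactly a series all of whose truncations $\tilde{z}_{\underline{n}}$ satisfy $w(\tilde{z}_{\underline{n}}-Q(\underline{x},\tilde{z}_{\underline{n}}))\geq_{\mathrm{grlex}}\underline{n}$), observes as you do that any solution has support in the monoid $\mathcal{S}$ generated by the exponents of $Q$, quotes the well-orderedness of such finitely generated monoids from the literature (Evans et al.), and then constructs each coefficient $c_{\underline{n}}$ by transfinite recursion along $(\mathcal{S},\leq_{\mathrm{grlex}})$ via Taylor's formula, which yields existence and uniqueness simultaneously. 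Your Picard iteration replaces the transfinite recursion by an $\mathbb{N}$-indexed scheme, at the price of the combinatorial convergence argument (finiteness of $\mathcal{S}$-representations of each $\underline{n}$, and the fact that monomials of $y^{(m+1)}-y^{(m)}$ require representations with at least $m+1$ summands); in exchange you get a self-contained proof that does not need the external reference, and your uniqueness argument via $w(R)\geq_{\mathrm{grlex}}\underline{\alpha}>_{\mathrm{grlex}}\underline{0}$ is shorter than the paper's. Incidentally, both the well-ordering of $\Sigma$ and the finiteness of representations follow in one stroke from Dickson's lemma applied to the exponent tuples $(a_s)_{s\in\mathcal{S}}\in\mathbb{N}^{|\mathcal{S}|}$ (an infinite descending chain, or an infinite fibre of $\sum_s a_s s$, would contain two comparable tuples, forcing a nonnegative, hence zero, difference); this is cleaner than the induction on $r$ you sketch, whose ``zero-degree slice'' is generated by lex-positive rather than grlex-positive elements and so is not literally ``of the same type''.

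One step needs restating: ``polynomial evaluation being continuous for this valuation'' is not quite right, for exactly the reason you yourself flag earlier. When $|\underline{\alpha}|=0$ one does not have $w(y_0-y^{(m)})\to\infty$ in $(\mathbb{Z}^r,\leq_{\mathrm{grlex}})$ (e.g.\ $(m+1)(1,-1)$ never exceeds $(0,1)$), so the iteration does not converge in the $w$-adic topology and continuity of $Q$ for $w$ cannot be invoked. The fix is the same support bookkeeping you already set up: the support of $Q(\underline{x},y_0)-Q(\underline{x},y^{(m)})=(y_0-y^{(m)})R(\underline{x},y_0,y^{(m)})$ consists of elements admitting representations with at least $m+2$ summands, so for each fixed $\underline{n}$ the coefficient of $\underline{x}^{\underline{n}}$ on both sides of $y^{(m+1)}=Q(\underline{x},y^{(m)})$ stabilizes, and the identity $y_0=Q(\underline{x},y_0)$ is obtained coefficientwise. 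With that rewording the argument is complete.
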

\begin{demo} Let 
\begin{equation}\label{equ:hensel}y=Q\left(\underline{x},y\right)
\end{equation}
 be a strongly reduced Henselian equation and let  $y_0=\displaystyle\sum_{\underline{n}>_{\textrm{grlex}}\underline{0}}c_{\underline{n}}\underline{x}^{\underline{n}} \in K((x_1^{\mathbb{Z}},\ldots,x_r^{\mathbb{Z}}))^{\textrm{grlex}}$. For $\underline{n}\in\mathbb{Z}^r$, $\underline{n}>_{\textrm{grlex}}0$, let us denote $\tilde{z}_{\underline{n}}:= \displaystyle\sum_{\underline{m}<_{\textrm{grlex}}\underline{n}} c_{\underline{m}}\underline{x}^{\underline{m}}$.
 We get started with the following key lemma:
\begin{lemma}\label{lemma:hensel} The following are equivalent:
\begin{enumerate}
\item a series $y_0$ is a solution of (\ref{equ:hensel});
\item for any $\underline{n}\in\mathbb{Z}^r$, $\underline{n}>_{\mathrm{grlex}}\underline{0}$, 
 $$ w\left(\tilde{z}_{\underline{n}}-Q\left(\underline{x},\tilde{z}_{\underline{n}}\right)\right)=w\left(y_0-\tilde{z}_{\underline{n}}\right);$$
\item for any $\underline{n}\in\mathbb{Z}^r$, $\underline{n}>_{\textrm{grlex}}\underline{0}$, 
 $$ w\left(\tilde{z}_{\underline{n}}-Q\left(\underline{x},\tilde{z}_{\underline{n}}\right)\right)\geq_{\mathrm{grlex}}\underline{n}.$$
\end{enumerate}
\end{lemma}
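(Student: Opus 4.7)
My plan is to prove the equivalence via the cycle $(1) \Rightarrow (2) \Rightarrow (3) \Rightarrow (1)$, with the real work concentrated in establishing a single reusable estimate comparing $Q(\underline{x}, y_0) - Q(\underline{x}, \tilde{z}_{\underline{n}})$ to $y_0 - \tilde{z}_{\underline{n}}$. Specifically, I first prove the key inequality
$$w\bigl(Q(\underline{x},y_0) - Q(\underline{x}, \tilde{z}_{\underline{n}})\bigr) \;>_{\mathrm{grlex}}\; w(y_0 - \tilde{z}_{\underline{n}}).$$
For this I write $Q = \sum_j a_j(\underline{x}) y^j$ and factor
$$Q(\underline{x},y_0) - Q(\underline{x},\tilde{z}_{\underline{n}}) = (y_0 - \tilde{z}_{\underline{n}}) \cdot \sum_{j\geq 1} a_j(\underline{x}) \bigl(y_0^{j-1} + y_0^{j-2}\tilde{z}_{\underline{n}} + \cdots + \tilde{z}_{\underline{n}}^{j-1}\bigr).$$
The second factor has $w$-valuation at least $w(Q) >_{\mathrm{grlex}} \underline{0}$: each $a_j$ has grlex-valuation $>_{\mathrm{grlex}}\underline{0}$ by the strong reduction hypothesis, while $w(y_0), w(\tilde{z}_{\underline{n}}) >_{\mathrm{grlex}}\underline{0}$, so every product $y_0^k \tilde{z}_{\underline{n}}^{j-1-k}$ has nonnegative valuation. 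The strict inequality displayed above then follows.

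With this in hand, $(1) \Rightarrow (2)$ is immediate: assuming $y_0 = Q(\underline{x}, y_0)$, writing
$$\tilde{z}_{\underline{n}} - Q(\underline{x},\tilde{z}_{\underline{n}}) = -(y_0 - \tilde{z}_{\underline{n}}) + \bigl(Q(\underline{x},y_0) - Q(\underline{x},\tilde{z}_{\underline{n}})\bigr),$$
the second summand has strictly larger valuation than the first by the estimate, hence the sum has valuation exactly $w(y_0 - \tilde{z}_{\underline{n}})$. The implication $(2) \Rightarrow (3)$ requires no argument beyond the observation that $y_0 - \tilde{z}_{\underline{n}}$ is, by construction of $\tilde{z}_{\underline{n}}$, a series supported on $\{\underline{m} \geq_{\mathrm{grlex}} \underline{n}\}$, so $w(y_0 - \tilde{z}_{\underline{n}}) \geq_{\mathrm{grlex}} \underline{n}$.

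For $(3) \Rightarrow (1)$, I decompose
$$y_0 - Q(\underline{x},y_0) = (y_0 - \tilde{z}_{\underline{n}}) + \bigl(\tilde{z}_{\underline{n}} - Q(\underline{x},\tilde{z}_{\underline{n}})\bigr) - \bigl(Q(\underline{x},y_0) - Q(\underline{x},\tilde{z}_{\underline{n}})\bigr),$$
and observe that the first summand has valuation $\geq_{\mathrm{grlex}} \underline{n}$ by construction, the second by hypothesis (3), and the third via the key estimate combined with the bound on the first. Since this holds for every $\underline{n} >_{\mathrm{grlex}} \underline{0}$, and since in $(\mathbb{Z}^r, \leq_{\mathrm{grlex}})$ no finite element can dominate all $\underline{n}$ (one may take $|\underline{n}|$ arbitrarily large), the generalized series $y_0 - Q(\underline{x},y_0)$ must vanish.

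The point that deserves the most care is to really use the strict inequality $w(Q) >_{\mathrm{grlex}} \underline{0}$ from the definition of a strongly reduced Henselian equation rather than the weaker $\geq_{\mathrm{grlex}} \underline{0}$: the strictness is what produces the strict inequality in the central estimate, which in turn drives both $(1) \Rightarrow (2)$ (via a ``smaller term dominates'' argument) and $(3) \Rightarrow (1)$ (via cofinality). The cofinality step itself is the only other point I would flag, though it is standard: a nonzero element of $K((x_1^{\mathbb{Z}},\ldots,x_r^{\mathbb{Z}}))^{\mathrm{grlex}}$ has a finite minimum-support element, which is exceeded by any $\underline{n}$ of sufficiently large total degree.
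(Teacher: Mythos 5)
Your proof is correct and follows essentially the same route as the paper: everything reduces to showing that $Q(\underline{x},y_0)-Q(\underline{x},\tilde z_{\underline n})$ has strictly larger valuation than $y_0-\tilde z_{\underline n}$, which is exactly where the hypothesis $w(Q)>_{\mathrm{grlex}}\underline 0$ enters, and then $(1)\Rightarrow(2)$ is an ultrametric dominance argument while $(3)\Rightarrow(1)$ is a cofinality/contradiction argument. The only (cosmetic) difference is that you obtain the key estimate from the factorization $y_0^j-\tilde z_{\underline n}^j=(y_0-\tilde z_{\underline n})\sum_k y_0^k\tilde z_{\underline n}^{j-1-k}$, whereas the paper packages the same information via Taylor's formula for $P(\underline{x},y)=y-Q(\underline{x},y)$ at $\tilde z_{\underline n}$, with the linear term $\bigl(1-\partial Q/\partial y\bigr)\tilde y_{\underline n}$ carrying the valuation.
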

\begin{demo}
For $\underline{n}>_{\textrm{grlex}}\underline{0}$, let us denote $\tilde{y}_{\underline{n}}:=y_0-\tilde{z}_{\underline{n}}=\displaystyle\sum_{\underline{m}\geq_{\textrm{grlex}}\underline{n}} c_{\underline{m}}\underline{x}^{\underline{m}}$. We apply Taylor's Formula to $P(\underline{x},y):=y-Q(\underline{x},y)$ at $\tilde{z}_{\underline{n}}$:
$$P\left(\underline{x},\tilde{z}_{\underline{n}}+y\right) =\tilde{z}_{\underline{n}}-Q\left(\underline{x},\tilde{z}_{\underline{n}}\right)+\left(1-\displaystyle\frac{\partial Q}{\partial y}\left(\underline{x},\tilde{z}_{\underline{n}}\right)\right)y +y^2R\left(\underline{x},y\right),$$
where $R\left(\underline{x},y\right)\in K((x_1^{\mathbb{Z}},\ldots,x_r^{\mathbb{Z}}))^{\textrm{grlex}}[y]$ with $w\left(R(\underline{x},y)\right)>_{\textrm{grlex}}\underline{0}$. The series
$y_0$ is a solution of (\ref{equ:hensel}) iff for any $\underline{n}$, $\tilde{y}_{\underline{n}}$ is a root of $P\left(\underline{x},\tilde{z}_{\underline{n}}+y\right)=0$, i.e.:
\begin{equation}\label{equ:henselian-trunc}
\tilde{z}_{\underline{n}}-Q\left(\underline{x},\tilde{z}_{\underline{n}}\right)+\left(1-\displaystyle\frac{\partial Q}{\partial y}\left(\underline{x},\tilde{z}_{\underline{n}}\right)\right)\tilde{y}_{\underline{n}}+ \tilde{y}_{\underline{n}}^2R\left(\underline{x},\tilde{y}_{\underline{n}}\right)=0.
\end{equation}  Now consider $y_0$ a solution of (\ref{equ:hensel}) and $\underline{n}\in\mathbb{Z}^r$, $\underline{n}>_{\mathrm{grlex}}\underline{0}$. Either $\tilde{y}_{\underline{n}}=0$, i.e. $y_0=\tilde{z}_{\underline{n}}$: (2) holds trivially. Or  $\tilde{y}_{\underline{n}}\neq 0$, so we have:
$$\underline{n}\leq_{\textrm{grlex}} w\left(\left(1-\displaystyle\frac{\partial Q}{\partial y}\left(\underline{x},\tilde{z}_{\underline{n}}\right)\right)\tilde{y}_{\underline{n}}\right) =w\left(\tilde{y}_{\underline{n}}\right)<_{\textrm{grlex}} 2w\left(\tilde{y}_{\underline{n}}\right)<_{\textrm{grlex}} w\left(\tilde{y}_{\underline{n}}^2R\left(\underline{x},\tilde{y}_{\underline{n}}\right)\right).$$
So we must have $w\left(\tilde{z}_{\underline{n}}-Q\left(\underline{x},\tilde{z}_{\underline{n}}\right)\right)=w\left(\tilde{y}_{\underline{n}}\right)$. \\
Now, $(2)\,\Rightarrow\, (3)$ since $w\left(\tilde{y}_{\underline{n}}\right)\geq_{\textrm{grlex}}\underline{n}$.\\
 Finally, suppose that for any $\underline{n}$,  $w\left(\tilde{z}_{\underline{n}}-Q\left(\underline{x},\tilde{z}_{\underline{n}}\right)\right)\geq_{\textrm{grlex}}\underline{n}$. If $y_0-Q\left(\underline{x},y_0\right)\neq 0$, denote  $\underline{n}_0:= w\left(y_0-Q\left(\underline{x},y_0\right)\right)$. For $\underline{n}>_{\textrm{grlex}}\underline{n}_0$, one has $$\underline{n}_0=w\left(\tilde{z}_{\underline{n}}-Q\left(\underline{x},\tilde{z}_{\underline{n}}\right)\right)\geq_{\textrm{grlex}}\underline{n}.$$ A contradiction.
\end{demo}
Let us return to the proof of Theorem \ref{theo:hensel}. Note that, if $y_0$ is a solution of (\ref{equ:hensel}), then its support needs to be included in the monoid $\mathcal{S}$ generated by the $\underline{i}$'s from the nonzero coefficients $a_{\underline{i},j}$ of $Q(\underline{x},y)$. If not, consider the smallest index $\underline{n}$ for $\leq_{\mathrm{grlex}}$ which is not in  $\mathcal{S}$. Property (2) of Lemma \ref{lemma:hensel} gives a contradiction for this index.
Since $\mathcal{S}$ is a finitely generated totally ordered monoid in $(\mathbb{Z}^r)_{\geq_\mathrm{grlex}\underline{0}}$, by \cite[Corollary 1.2]{evans-al:tot-ord-commut-monoids},  it is a  well-ordered set.
Let us prove  by transfinite induction on $\underline{n}\in \mathcal{S}$  the existence and uniqueness of a sequence of series  $\tilde{z}_{\underline{n}}$ as in the statement of the previous lemma.  Suppose that for some $\underline{n}\in\mathcal{S}$,  we are given a series $\tilde{z}_{\underline{n}}$ with support included in $\mathcal{S}$ and $<_{\textrm{grlex}}\underline{n}$, such that $w\left(\tilde{z}_{\underline{n}}-Q\left(\underline{x},\tilde{z}_{\underline{n}}\right)\right)\geq_{\textrm{grlex}}\underline{n}$. Then by Taylor's formula as in the proof of the previous lemma, denoting by $\underline{m}$ the successor of $\underline{n}$ in $\mathcal{S}$ for $\leq_{\textrm{grlex}}$:
$$P\left(\underline{x},\tilde{z}_{\underline{m}}\right)=P\left(\underline{x},\tilde{z}_{\underline{n}}+c_{\underline{n}}\underline{x}^{\underline{n}}\right) =\tilde{z}_{\underline{n}}-Q\left(\underline{x},\tilde{z}_{\underline{n}}\right)+\left(1-\displaystyle\frac{\partial Q}{\partial y}\left(\underline{x},\tilde{z}_{\underline{n}}\right)\right)c_{\underline{n}}\underline{x}^{\underline{n}} +c_{\underline{n}}^2\underline{x}^{2\underline{n}}R\left(\underline{x},\tilde{z}_{\underline{n}}\right).$$
Therefore, one has:
$$ w\left(P\left(\underline{x},\tilde{z}_{\underline{m}}\right)\right)=w\left(\tilde{z}_{\underline{m}}-Q\left(\underline{x},\tilde{z}_{\underline{m}}\right)\right)\geq_{\textrm{grlex}}\underline{m}>_{\textrm{grlex}}\underline{n}$$
if and only if $c_{\underline{n}}$ is equal to the coefficient of $\underline{x}^{\underline{n}}$ in $Q\left(\underline{x},\tilde{z}_{\underline{n}}\right)$. This determines $\tilde{z}_{\underline{m}}$ in a unique way as desired.
\end{demo}

We prove now our generalized version of the Flajolet-Soria Formula \cite{flajolet-soria:coeff-alg-series}. Our proof, as the one in \cite{sokal:implicit-function}, uses the classical Lagrange Inversion Formula in one variable. We will use Notation \ref{nota:FS}.

\begin{theo}[Generalized multivariate Flajolet-Soria Formula]\label{theo:formule-FS}\indent \\
Let $y=Q\left(\underline{x},y\right)=\displaystyle\sum_{\underline{i},j}a_{\underline{i},j} \underline{x}^{\underline{i}}y^j$ be a strongly reduced Henselian equation.  Define $\underline{\iota}_0=(\iota_{0,1},\ldots,\iota_{0,r})$ by: $$-\iota_{0,k}:=\min\left\{0,\, i_k\, /\, a_{\underline{i},j}\neq 0,\, \underline{i} = (i_1,\ldots,i_k,\ldots,i_r)\right\},\ \ \ k=1,\ldots,r.$$
Then the  coefficients $c_{\underline{n}}$ of the unique solution $y_0=\displaystyle\sum_{\underline{n}>_{\mathrm{grlex}}\underline{0} } c_{\underline{n}}\underline{x}^{\underline{n}}\in K((x_1^{\mathbb{Z}},\ldots,x_r^\mathbb{Z}))^{\mathrm{grlex}}$ are given by:
$$ c_{\underline{n}}=\displaystyle\sum_{m=1}^{\mu_{\underline{n}}}\frac{1}{m}\displaystyle\sum_{|\underline{M}|=m,\ ||\underline{M}||=m-1,\ G(\underline{M})=\underline{n}}\frac{m!}{\underline{M}!}\underline{A}^{\underline{M}}$$
where $\mu_{\underline{n}}$ is the greatest integer $m$ such that there exists an $\underline{M}$ with $|\underline{M}|=m,\ ||\underline{M}||=m-1$ and $G(\underline{M})=\underline{n}$. Moreover, for  $\underline{n}=(n_1,\ldots,n_r)$,  $\mu_{\underline{n}}\leq \displaystyle\sum_{k=1}^r\lambda_k\, n_k$ with:
$$\lambda_k=\left\{ \begin{array}{ll}
\displaystyle\prod_{j=k+1}^{r-1}(1+\iota_{0,j})+\displaystyle\prod_{j=1}^{r-1}(1+\iota_{0,j}) & \textrm{ if } k<r-1; \\
1+\displaystyle\prod_{j=1}^{r-1}(1+\iota_{0,j})& \textrm{ if } k=r-1;\\
\displaystyle\prod_{j=1}^{r-1}(1+\iota_{0,j})& \textrm{ if } k=r.\end{array}  \right.$$
\end{theo}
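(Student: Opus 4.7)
The strategy is to derive the formula from the classical one-variable Lagrange inversion applied to a universal version of the equation with formal coefficients, and then to specialize and identify the result with $y_0$ using the uniqueness in Theorem~\ref{theo:hensel}. The argument parallels Sokal's treatment \cite{sokal:implicit-function} of the polynomial case, but the Laurent-polynomial setting forces us to justify the finiteness of the summation range $\mu_{\underline{n}}$ by an explicit linear estimate in terms of the $\iota_{0,k}$.

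Concretely, I introduce commutative indeterminates $T_{\underline{i},j}$, one per pair $(\underline{i},j)$ with $a_{\underline{i},j}\neq 0$, a dummy variable $t$, and consider the universal equation $Y = t\,\Phi(Y)$, where $\Phi(Y):=\sum_{\underline{i},j}T_{\underline{i},j}\,Y^j \in K[T_{\underline{i},j}][Y]$. Classical Lagrange inversion in $K[T_{\underline{i},j}][[t]]$ combined with the multinomial expansion of $\Phi(Y)^m$ gives
\begin{equation*}
[t^m]\,Y \;=\; \frac{1}{m}\,[Y^{m-1}]\,\Phi(Y)^m \;=\; \frac{1}{m}\sum_{\substack{|\underline{M}|=m\\ \|\underline{M}\|=m-1}}\frac{m!}{\underline{M}!}\,\underline{T}^{\underline{M}}.
\end{equation*}
Specializing $T_{\underline{i},j}=a_{\underline{i},j}\underline{x}^{\underline{i}}$ turns $\underline{T}^{\underline{M}}$ into $\underline{A}^{\underline{M}}\underline{x}^{G(\underline{M})}$; summing over $m\geq 1$ and regrouping by $\underline{n}=G(\underline{M})$ produces the candidate series
$$\widetilde{y}\;:=\;\sum_{\underline{n}}\,\Bigl(\sum_{m\geq 1}\frac{1}{m}\sum_{\substack{|\underline{M}|=m,\ \|\underline{M}\|=m-1\\ G(\underline{M})=\underline{n}}}\frac{m!}{\underline{M}!}\,\underline{A}^{\underline{M}}\Bigr)\,\underline{x}^{\underline{n}}.$$
Granting the finiteness of $\mu_{\underline{n}}$ proved below, each inner coefficient is a finite sum; since every exponent $\underline{n}$ occurring in $\widetilde{y}$ lies in the finitely generated monoid $\mathcal{S}\subset(\mathbb{Z}^r)_{>_{\mathrm{grlex}}\underline{0}}$ generated by $\mathrm{supp}_{\underline{x}}(Q)$, which is well ordered by \cite{evans-al:tot-ord-commut-monoids}, $\widetilde{y}$ is a bona fide element of $K((x_1^{\mathbb{Z}},\ldots,x_r^{\mathbb{Z}}))^{\mathrm{grlex}}$ with $w(\widetilde{y})>_{\mathrm{grlex}}\underline{0}$. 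The formal Lagrange identity specialized at $t=1$ translates into $\widetilde{y}=Q(\underline{x},\widetilde{y})$, so the uniqueness in Theorem~\ref{theo:hensel} forces $\widetilde{y}=y_0$, and the coefficient formula follows by extracting $[\underline{x}^{\underline{n}}]$.

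The main obstacle is the uniform bound $\mu_{\underline{n}}\leq\sum_k\lambda_k n_k$. I reduce it to constructing a linear form $L(\underline{i}):=\sum_k\lambda_k i_k$ on $\mathbb{Z}^r$ satisfying $L(\underline{i})\geq 1$ whenever $\underline{i}>_{\mathrm{grlex}}\underline{0}$ and $i_k\geq -\iota_{0,k}$ for every $k$: indeed, any admissible $\underline{M}$ with $|\underline{M}|=m$ and $G(\underline{M})=\underline{n}$ would then satisfy
$$m\;=\;|\underline{M}|\;\leq\;\sum_{\underline{i},j}m_{\underline{i},j}L(\underline{i})\;=\;L(G(\underline{M}))\;=\;\sum_k\lambda_k n_k.$$
To produce such an $L$, I split the constraint $\underline{i}>_{\mathrm{grlex}}\underline{0}$ into the two cases $|\underline{i}|\geq 1$ and $|\underline{i}|=0$ with lex-leading positive coordinate, then determine the $\lambda_k$ recursively from $k=r$ down to $k=1$: at each stage, the weight $\lambda_k$ must be chosen large enough to dominate the worst possible negative contribution $-\iota_{0,j}\,\lambda_j$ inherited from the later coordinates. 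The nested products $\prod_{j}(1+\iota_{0,j})$ appearing in the statement are exactly the amplifications accumulated at each step (as can be verified for $r=2$ by the choices $\lambda_1=2+\iota_{0,1}$, $\lambda_2=1+\iota_{0,1}$, and for general $r$ by an analogous case analysis across both subcases). This simultaneously ensures the convergence of $\widetilde{y}$ as a generalized series and delivers the announced bound on $\mu_{\underline{n}}$.
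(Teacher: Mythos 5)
Your proposal follows the paper's skeleton for the identity itself: your universal equation $Y=t\,\Phi(Y)$ is, after specialization, exactly the paper's relation $\tilde f(\underline{x},y)=y\,Q(\underline{x},\tilde f(\underline{x},y))$ for the compositional inverse of $y/Q(\underline{x},y)$ in $K(\underline{x})[[y]]$ (your $t$ is their $y$), and both arguments conclude by one-variable Lagrange inversion, well-definedness of the evaluation at $t=1$ granted the finiteness of $\mu_{\underline{n}}$, and the uniqueness in Theorem \ref{theo:hensel}. Where you genuinely diverge is the proof of the bound $\mu_{\underline{n}}\le\sum_k\lambda_k n_k$. The paper splits $|\underline{M}|$ into the partial sums $\sum_{|\underline{i}|=0=i_1=\cdots=i_{k-1},\,i_k>0}m_{\underline{i},j}$ and bounds each by induction on $k$ using the system $G(\underline{M})=\underline{n}$, accumulating coefficients $\alpha_{k,q}$ and then $\beta_k=\lambda_k$; you instead exhibit a single linear form $L(\underline{i})=\sum_k\lambda_k i_k$ with $L\ge 1$ on every exponent $\underline{i}$ of the $\underline{x}$-support of $Q$ (these satisfy $\underline{i}>_{\mathrm{grlex}}\underline{0}$ and $i_k\ge-\iota_{0,k}$), whence $m=|\underline{M}|\le L(G(\underline{M}))=L(\underline{n})$. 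This is cleaner and more conceptual, and it does deliver exactly the stated constants: writing $\mu_k:=\prod_{j=k+1}^{r-1}(1+\iota_{0,j})$ one has $L(\underline{i})=\lambda_r|\underline{i}|+\sum_{k=1}^{r-1}\mu_k i_k$, and the telescoping identity $\prod_{j=k}^{r-1}(1+\iota_{0,j})-\iota_{0,k}\prod_{j=k+1}^{r-1}(1+\iota_{0,j})=\prod_{j=k+1}^{r-1}(1+\iota_{0,j})$ gives $L(\underline{i})\ge 1$ both when $|\underline{i}|\ge 1$ and when $|\underline{i}|=0$ with first positive coordinate $i_k$ ($k\le r-1$). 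Since you only verify $r=2$ and gesture at the general case, you should include that telescoping computation; with it your bound argument is complete and arguably preferable to the paper's. One further sentence is also needed to legitimize $t=1$: as the paper does, observe that both sides of the specialized identity lie in $K[y]((x_1^{\mathbb{Z}},\ldots,x_r^{\mathbb{Z}}))^{\mathrm{grlex}}$, i.e.\ each $\underline{x}$-coefficient is a polynomial in the dummy variable, so that evaluation there is a ring homomorphism.
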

\begin{remark}\label{rem:entier}
By Lemma \ref{lemma:arithm}, note that in fact $\displaystyle\frac{1}{m}\cdot \displaystyle\frac{m!}{\underline{M}!}\in\mathbb{N}$. If we set $m_j:=\displaystyle\sum_{\underline{i}}m_{\underline{i },j}$ and $\underline{N}=(m_j)_j$, then $|\underline{N}|=m$, $\|\underline{N}\|=m-1$ and:
$$\displaystyle\frac{1}{m}\cdot \displaystyle\frac{m!}{\underline{M}!}= \displaystyle\frac{1}{m}\cdot \displaystyle\frac{m!}{\underline{N}!} \cdot \displaystyle\frac{N!}{\underline{M}!},$$
where $\displaystyle\frac{N!}{\underline{M}!}$ is a product of multinomial coefficients and $\displaystyle\frac{1}{m}\cdot \displaystyle\frac{m!}{\underline{N}!}$ is an integer by Lemma  \ref{lemma:arithm}. 
Thus, each $c_n$ is the evaluation at the $a_{i,j}$'s of a polynomial with coefficients in $\mathbb{Z}$.
\end{remark}
\begin{demo}
For a given strongly reduced Henselian equation $y=Q\left(\underline{x},y\right)$, one can expand:
$$f\left(\underline{x},y\right):=\displaystyle\frac{y}{Q\left(\underline{x},y\right)}=\displaystyle\sum_{n\geq 1}b_n(\underline{x})y^n\,\in K(\underline{x})[[y]]\ \mathrm{with}\ b_1\neq 0,$$
which admits a unique formal inverse in $K(\underline{x})[[y]]$:
$$\tilde{f}\left(\underline{x},y\right)= \displaystyle\sum_{m\geq  1}d_m(\underline{x}) y^m.$$
The Lagrange Inversion Theorem (see e.g. \cite[Theorem 2]{henrici:lagr-burmann} with $\mathcal{F}=K(\underline{x})$ and $P=f(\underline{x},y)$) applies: for any $m$, $d_m(\underline{x})$ is equal to the coefficient of $y^{m-1}$ in  $\left[Q\left(\underline{x},y\right)\right]^m$, divided by $m$. Hence, according to the multinomial expansion of $\left[Q\left(\underline{x},y\right)\right]^m=\left[\displaystyle\sum_{\underline{i},j}a_{\underline{i},j} \underline{x}^{\underline{i}}y^j\right]^m$:
$$d_m(\underline{x})=\displaystyle\frac{1}{m}\displaystyle\sum_{|\underline{M}|=m,\  ||\underline{M}||=m-1}\frac{m!}{\underline{M}!}\underline{A}^{\underline{M}}\underline{x}^{G(\underline{M})}.$$
Note that the powers $\underline{n}$ of $\underline{x}$ that  appear in $d_m$ are nonzero elements of the monoid generated by the exponents $\underline{i} $ of the monomials $\underline{x}^{\underline{i}}y^j$ appearing  in $Q\left(\underline{x},y\right)$, so they are $>_{\mathrm{grlex}}0$.
Now, it will suffice to show that, for any fixed $\underline{n}$,  the number $\displaystyle\sum_{k=1}^r\lambda_k\, n_k$ is indeed a bound for the number $\mu_{\underline{n}}$ of $m$'s for which $d_m$ can contribute to the coefficient of $\underline{x}^{\underline{n}}$. Indeed, this will show that $\tilde{f}\left(\underline{x},y\right)\in K[y]((x_1^{\mathbb{Z}},\ldots,x_r^\mathbb{Z}))^{\textrm{grlex}}$. But, by definition of $\tilde{f}$, one has that:
$$\tilde{f}\left(\underline{x},y\right)=y\,Q\left(\underline{x},\tilde{f}\left(\underline{x},y\right)\right)\,\in K((x_1^{\mathbb{Z}},\ldots,x_r^\mathbb{Z}))^{\textrm{grlex}}[[y]].$$
Hence, both members of this equality are in fact in $ K[y]((x_1^{\mathbb{Z}},\ldots,x_r^\mathbb{Z}))^{\textrm{grlex}}$.
So, for $y=1$, we get that $\tilde{f}\left(\underline{x},1\right)\in K((x_1^{\mathbb{Z}},\ldots,x_r^\mathbb{Z}))^{\textrm{grlex}}$ is a solution with $w\left(\tilde{f}\left(\underline{x},1\right)\right)>_{\mathrm{grlex}}\underline{0}$ of the equation: $$f(\underline{x},y)=\displaystyle\frac{y}{Q\left(\underline{x},y\right)}=1\, \Leftrightarrow\, y=Q(\underline{x},y).$$
It is equal to the unique solution $y_0$ of Theorem \ref{theo:hensel}: 
$$y_0=\tilde{f}\left(\underline{x},1\right)= \displaystyle\sum_{m\geq  1}d_m(\underline{x}).$$
  We consider the relation:
$$G(\underline{M})=\underline{n}\ \Leftrightarrow\ \left\{\begin{array}{lcl}
\displaystyle\sum_{\underline{i},j}m_{\underline{i},j}\,i_1&=&n_1;\\
&\vdots&\\
\displaystyle\sum_{\underline{i},j}m_{\underline{i},j}\,i_r&=&n_r.
\end{array}\right.$$
Let us decompose $m=|M|=\displaystyle\sum_{\underline{i},j}m_{\underline{i},j}$ as follows:
$$|M|=\displaystyle\sum_{|\underline{i}|>0}m_{\underline{i},j}+\displaystyle\sum_{|\underline{i}|=0,\, i_1>0}m_{\underline{i},j}+\cdots+ \displaystyle\sum_{|\underline{i}|=0=i_1=\cdots=i_{r-2},\, i_{r-1}>0}m_{\underline{i},j}.$$
So, the relation $G(\underline{M})=\underline{n}$ can be written as:
\begin{equation}\label{equ:borneFS}
\left\{\begin{array}{ccl}
\displaystyle\sum_{|\underline{i}|>0}m_{\underline{i},j}\,i_1+\displaystyle\sum_{|\underline{i}|=0,\, i_1>0}m_{\underline{i},j}\,i_1&=&n_1;\\
&\vdots&\\
\displaystyle\sum_{|\underline{i}|>0}m_{\underline{i},j}\,i_k+\displaystyle\sum_{|\underline{i}|=0,\, i_1>0}m_{\underline{i},j}\,i_k+\cdots+ \displaystyle\sum_{|\underline{i}|=0=i_1=\cdots=i_{k-1},\, i_{k}>0}m_{\underline{i},j}\,i_k&=&n_k;\\
&\vdots&\\
\displaystyle\sum_{\underline{i},j}m_{\underline{i},j}\,i_r&=&n_r.
\end{array}\right.\end{equation}
Firstly, let us show by induction on $k\in\{0,\ldots,r-1\}$ that:
$$\begin{array}{l}
\displaystyle\sum_{|\underline{i}|=0=i_1=\cdots=i_{k-1},\, i_{k}>0}m_{\underline{i},j}\ \ \ \ \leq\ \ \ \  \left[\iota_{0,k}\left(\displaystyle\prod_{p=2}^{k-1}(1+\iota_{0,p}) + \displaystyle\prod_{p=1}^{k-1}(1+\iota_{0,p}) \right)\right]n_1 \\
\ \ \ \ \ \ \ \ \ \ \ \ + \left[\iota_{0,k}\left(\displaystyle\prod_{p=3}^{k-1}(1+\iota_{0,p})  + \displaystyle\prod_{p=1}^{k-1}(1+\iota_{0,p}) \right)\right]n_2
+\cdots+\left[1+\iota_{0,k}\displaystyle\prod_{p=1}^{k-1}(1+\iota_{0,p}) \right]n_k\\
\ \ \ \ \ \ \ \ \ \ \ \ \ \ \ \ \ \ \ \ \ \ \ \ \ \ \ \ \ \ \ \ \ \ \ \ +\left[\iota_{0,k}\displaystyle\prod_{p=1}^{k-1}(1+\iota_{0,p})\right]n_{k+1}
+\cdots+\left[\iota_{0,k}\displaystyle\prod_{p=1}^{k-1}(1+\iota_{0,p})\right]n_r ,
\end{array}$$
the initial step $k=0$ being:
$$ \displaystyle\sum_{|\underline{i}|>0}m_{\underline{i},j}\leq n_1+\ldots+n_r.        $$
This case $k=0$ follows directly from (\ref{equ:borneFS}), by summing its $r$ relations:
$$ \displaystyle\sum_{|\underline{i}|>0}m_{\underline{i},j}\leq\displaystyle\sum_{|\underline{i}|>0}m_{\underline{i},j}|\underline{i}|\leq n_1+\ldots+n_r.         $$
Suppose that we have the desired property until some rank $k-1$. Recall that for any $\underline{i}$, $i_k\geq -\iota_{0,k}$. By the $k$'th equation in (\ref{equ:borneFS}), we have:\\

$\begin{array}{l}
\displaystyle\sum_{|\underline{i}|=0=i_1=\cdots=i_{k-1},\, i_{k}>0}m_{\underline{i},j}\, \leq \, \displaystyle\sum_{|\underline{i}|=0=i_1=\cdots=i_{k-1},\, i_{k}>0}m_{\underline{i},j}\,i_k
\end{array}$

$\ \ \ \ \ \ \ \ \ \ \ \ \ \ \ \ \ \ \ \ \ \  \begin{array}{lcl}
&\leq & n_k-\left(
\displaystyle\sum_{|\underline{i}|>0}m_{\underline{i},j}\,i_k+\displaystyle\sum_{|\underline{i}|=0,\, i_1>0}m_{\underline{i},j}\,i_k+\cdots+ \displaystyle\sum_{|\underline{i}|=0=i_1=\cdots=i_{k-2},\, i_{k-1}>0}m_{\underline{i},j}\,i_k\right)\\
&\leq & n_k+\iota_{0,k}\left(
\displaystyle\sum_{|\underline{i}|>0}m_{\underline{i},j} +\displaystyle\sum_{|\underline{i}|=0,\, i_1>0}m_{\underline{i},j} +\cdots+ \displaystyle\sum_{|\underline{i}|=0=i_1=\cdots=i_{k-2},\, i_{k-1}>0}m_{\underline{i},j} \right).
\end{array}$\\

We apply the induction hypothesis to these $k$ sums and obtain an inequality of type:
$$\displaystyle\sum_{|\underline{i}|=0=i_1=\cdots=i_{k-1},\, i_{k}>0}m_{\underline{i},j}\leq \alpha_{k,1}\,n_1+\cdots+\alpha_{k,r}\,n_r.$$
For $q>k$, let us compute:
$$\begin{array}{lcl}
\alpha_{k,q}&=&\iota_{0,k}\left( 1+  \iota_{0,1}+ \iota_{0,2}(1+\iota_{0,1})+\iota_{0,3}(1+\iota_{0,1})(1+\iota_{0,2})+\cdots + \iota_{0,k-1} \displaystyle\prod_{p=1}^{k-2}(1+\iota_{0,p})     \right)\\
&=& \iota_{0,k}\displaystyle\prod_{p=1}^{k-1}(1+\iota_{0,p}).
\end{array}$$
For $q=k$, we have the same computation, plus the contribution of the isolated term $n_k$. Hence:
$$\alpha_{k,k}=1+\iota_{0,k}\displaystyle\prod_{p=1}^{k-1}(1+\iota_{0,p}).$$
For $q<k$, we have a part of the terms leading again by the same computation to the formula $\iota_{0,k} \displaystyle\prod_{p=1}^{k-1}(1+\iota_{0,p})$. The other part consists of terms starting to appear at the rank $q$ and whose sum can be computed as:
$$\iota_{0,k}\left( 1+  \iota_{0,q+1}+ \iota_{0,q+2}(1+\iota_{0,q+1})+\cdots + \iota_{0,k-1} \displaystyle\prod_{p=q+1}^{k-2}(1+\iota_{0,p})     \right)\\
= \iota_{0,k} \displaystyle\prod_{p=q+1}^{k-1}(1+\iota_{0,p}).$$
So we obtain as desired:
$$\alpha_{k,q}= \iota_{0,k}\left[ \displaystyle\prod_{p=q+1}^{k-1}(1+\iota_{0,p})+ \displaystyle\prod_{p=1}^{k-1}(1+\iota_{0,p})\right].  $$
Subsequently, we obtain an inequality for  $m=|M|=\displaystyle\sum_{\underline{i},j}m_{\underline{i},j}$ of type:
$$\begin{array}{lcl}
m&=&\displaystyle\sum_{|\underline{i}|>0}m_{\underline{i},j}+\displaystyle\sum_{|\underline{i}|=0,\, i_1>0}m_{\underline{i},j}+\cdots+ \displaystyle\sum_{|\underline{i}|=0=i_1=\cdots=i_{r-2},\, i_{r-1}>0}m_{\underline{i},j}\\
&\leq & \beta_1\, n_1+\cdots +\beta_r\,n_r,
\end{array}$$
with  $\beta_k= 1+\displaystyle\sum_{l=1}^{r-1}\alpha_{l,k}$ for any $k$.   For $k=r$, let us compute in a similar way as before for $\alpha_{k,q}$:
$$\begin{array}{lcl}
\beta_r&=&1+\iota_{0,1}+\iota_{0,2}(1+\iota_{0,1})+\cdots +\iota_{0,k}\displaystyle\prod_{p=1}^{k-1}(1+\iota_{0,p})+\cdots +\iota_{0,r-1}\displaystyle\prod_{p=1}^{r-2}(1+\iota_{0,p})\\
&=& \displaystyle\prod_{p=1}^{r-1}(1+\iota_{0,p})=\lambda_r.
\end{array}$$
For $k=r-1$, we have the same computation plus 1 coming from the term $\alpha_{r-1,r-1}$. Hence:
$$ \beta_{r-1}=1+ \displaystyle\prod_{p=1}^{r-1}(1+\iota_{0,p})=\lambda_{r-1}.$$
For $k\in \{1,\ldots,r-2\}$, we have a part of the terms leading again by the same computation to the formula $\displaystyle\prod_{p=1}^{r-1}(1+\iota_{0,p})$. The other part consists of terms starting to appear at the rank $k$ and whose sum can be computed as:
$$1+\iota_{0,k+1}+\iota_{0,k+2}(1+\iota_{0,k+1})+\cdots+\iota_{0,r-1}\displaystyle\prod_{p=k+1}^{r-2}(1+\iota_{0,p})=\displaystyle\prod_{p=k+1}^{r-1}(1+\iota_{0,p})$$
Altogether, we obtain as desired:
$$\beta_k=\displaystyle\prod_{p=k+1}^{r-1}(1+\iota_{0,p})+\displaystyle\prod_{p=1}^{r-1}(1+\iota_{0,p})=\lambda_k.$$
\end{demo}

\begin{remark}\label{rem:FS-contraint}\indent 
\begin{enumerate}
\item Note that  for any $k\in\{1,\ldots,r-1\}$,  $\lambda_k=\lambda_r\left(\displaystyle\frac{1}{(1+\iota_{0,1})\cdots(1+\iota_{0,k})}+1\right)$, so $\lambda_1\geq \lambda_k>\lambda_r$. Thus, we obtain that:
$$ \mu_{\underline{n}}\leq \lambda_1|\underline{n}|.$$
Moreover, in the particular case where $\underline{\iota}_0=\underline{0}$ -- i.e. when $Q(\underline{x},y)\in K[\underline{x},y]$ and $y_0\in K[[\underline{x}]]$ as in \cite{sokal:implicit-function} -- we have $\lambda_k=2$ for $k\in\{1,\ldots,r-1\}$ and $\lambda_r=1$. Thus  we obtain:
$$ \mu_{\underline{n}}\leq 2|\underline{n}|-n_r\leq 2|\underline{n}|.$$
Note that :
$$|\underline{n}| \leq 2|\underline{n}|-n_r\leq 2|\underline{n}|$$
which can be related in this context with the effective bounds  $2|\underline{n}|-1$ (case\\ $w(Q(\underline{x},y))\geq_{\mathrm{grlex}}\underline{0}$) and $|\underline{n}|$ (case $w(Q(\underline{x},y))>_{\mathrm{grlex}}\underline{0}$) given in \cite[Remark 2.4]{sokal:implicit-function}.
\item With the notation from Theorem \ref{theo:formule-FS}, any strongly reduced Henselian equation $y=Q(\underline{x},y)$ can be written:
$$\underline{x}^{\underline{\iota}_0}y=\tilde{Q}(\underline{x},y)$$
$\textrm{ with }\tilde{Q}(\underline{x},y)\in K[\underline{x},y]$ and $w(\tilde{Q}(\underline{x},y))>_{\mathrm{grlex}}\underline{\iota}_0$. 
Any element $\underline{n}$ of $\mathrm{Supp}\, y_0$, being in the monoid $\mathcal{S}$ of the proof of Theorem \ref{theo:hensel}, is of the form:
$$\underline{n}=\underline{m}-k\,\underline{\iota}_0\ \ \mathrm{with}\ \underline{m}\in\mathbb{N}^r,\ k\in\mathbb{N}\ \mathrm{and}\ k\,|\underline{\iota}_0|\leq |\underline{m}|.$$
\end{enumerate}
\end{remark}

\begin{ex}\label{ex:FS-gene}
Let us consider the following example of strongly reduced Henselian equation:
$$\begin{array}{lcl}
 y &=& a_{1,-1,2}x_1{x_2}^{-1} y^2 + a_{-1,2,0}{x_1}^{-1}{x_2}^2 +a_{0,1,1}x_2y+ a_{-1,3,0}{x_1}^{-1}{x_2}^3 +a_{0,2,1}{x_2}^2y\\
 &&+\left(a_{1, 1, 0}+ a_{1,1,2}y^2\right)x_1 x_2  +a_{1,2,0} x_1{x_2}^2+a_{2,1,1}y{x_1}^2x_2\\
&&+ a_{1,3,0} x_1{x_2}^3 +a_{2,2,1} y{x_1}^2{x_2}^2+a_{3,1,2}y^2{x_1}^3x_2.
\end{array}$$
The support of the solution is included in the monoid $\mathcal{S}$ generated by the exponents of $(x_1,x_2)$, which is equal to the pairs $\underline{n}=(n_1,n_2)\in\mathbb{Z}^2$ with $n_2=-n_1+ l$ and  $n_1\geq -l$ for  $l\in\mathbb{N}$. We have $\underline{\iota}_0=(1,1)$, so $(\lambda_1,\lambda_2)=(3,2)$ and $\mu_{\underline{n}}\leq 3n_1+2n_2=n_1+2l$. We are in position to compute the first coefficients of the unique solution $y_0$. Let us give the details for the computation of the first terms, for $l=0$. In this case, to compute $c_{n_1,-n_1}$, $n_1>0$, we consider $m$ such that  $1\leq m\leq \mu_{n_1,-n_1}\leq n_1$, and $\underline{M}=(m_{\underline{i},j})_{\underline{i},j}$ such that:
$$\left\{\begin{array}{lcl}
|\underline{M}|=m & \Leftrightarrow &  \displaystyle\sum_{\underline{i},j}m_{\underline{i},j}=m\leq n_1;\\
\|\underline{M}\|=m-1 & \Leftrightarrow &  \displaystyle\sum_{\underline{i},j}m_{\underline{i},j}j=m-1\leq n_1-1;\\
G(\underline{M})=\underline{n} &\Leftrightarrow &  \left\{\begin{array}{lcl}
\displaystyle\sum_{\underline{i},j}m_{\underline{i},j}\,i_1&=&n_1>0;\\
\displaystyle\sum_{\underline{i},j}m_{\underline{i},j}\,i_2&=&-n_1<0.
\end{array}\right.
\end{array}\right.$$
The last condition implies that $m_{1,-1,2}\geq n_1$. 
But, according to the second condition, this gives $n_1-1\geq \|\underline{M}\|\geq 2\, m_{1,-1,2} \geq 2\,n_1$, a contradiction. Hence, $c_{n_1,-n_1}=0$ for any $n_1>0$.\\
In the case $l=1$, we consider the corresponding conditions to compute $c_{n_1,-n_1+1}$ for $n_1\geq -1$. We obtain that $1\leq m\leq \mu_{n_1,-n_1+1}\leq n_1+2$. Suming the two conditions in $G(\underline{M})=(n_1,-n_1+1)$, we get $m_{-1,2,0}+m_{0,1,1}=1$ and $m_{\underline{i},j}=0$ for any $\underline{i}$ such that $i_1+i_2\geq 2$. So we are left with the following linear system:
$$\left\{\begin{array}{cccccccccc}
(L_1)&m_{1,-1,2}&+&m_{-1,2,0}&+&m_{0,1,1}&=& m&\leq& n_1+2\\
(L_2)&2\,m_{1,-1,2}&+&&&m_{0,1,1}&=& m-1&\leq& n_1+1\\
(L_3)&m_{1,-1,2}&-&m_{-1,2,0}& &&=& n_1&&\\
(L_4)&-m_{1,-1,2}&+&2\,m_{-1,2,0}&+&m_{0,1,1}&=& -n_1+1&&\\
\end{array}\right.$$
By comparing $(L_2)-(L_3)$ and $(L_1)$, we get that $m=m-1-n_1$, so $n_1=-1$. Consequently, by $(L_1)$, $m=1$, and by $(L_2)$, $m_{1,-1,2}=m_{0,1,1}=0$. Since $m_{-1,2,0}+m_{0,1,1}=1$, we obtain $m_{-1,2,0}=1$ which indeed gives the only solution. Finally,  $c_{n_1,-n_1+1}=0$ for any $n_1\geq 0$ and: 
$$ c_{-1,2}=\displaystyle\frac{1}{1}\displaystyle\frac{1!}{1!0!}{a_{-1,2,0}}^1=a_{-1,2,0}.$$
Similarly, we claim that one can determine that:
$$\begin{array}{lcll}
c_{-2,4}&=&0,&\mu_{\underline{n}}\leq 2;\\
c_{-1,3}&=&a_{{-1,3,0}}+a_{{0,1,1}}a_{{-1,2,0}}+a_{{1,-1,2}}{a_{{-1,2,0}}}^{2},& \mu_{\underline{n}}\leq 3;\\
c_{0,2}&=&0,&  \mu_{\underline{n}}\leq 4;\\
c_{1,1}&=&a_{{1,1,0}},&  \mu_{\underline{n}}\leq 5;\\
c_{n_1,-n_1+2}&=&0\ \ \ \ \mathrm{for}\ \ \ \ n_1\geq 0,\ n_1\neq 1&\mu_{\underline{n}}\leq n_1+4;\\
c_{n_1,-n_1+3}&=&0\ \ \ \ \mathrm{for}\ \ \ \ -3\leq n_1\leq -2,&\mu_{\underline{n}}\leq n_1+6;\\
c_{-1,4}&=&a_{{0,2,1}}a_{{-1,2,0}}+a_{{0,1,1}}a_{{-1,3,0}}+2\,a_{{1,-1,2}}a_{{-1,2,0}}a_{{-1,3,0}} &\\
&&+{a_{{0,1,1}}}^{2}a_{{-1,2,0}}+3\,a_{{0,1,1}}a_{{1,-1,2}}{a_{{-1,2,0}}}^{2}+2\,{a_{{1,-1,2}}}^{2}{a_{{-1,2,0}}}^{3},& \mu_{\underline{n}}\leq 5;\\
&\vdots&\end{array}$$
\end{ex}

\section{Closed-form expression of an algebraic multivariate series.}\label{section:flajo-soria}

The field $K$ of coefficients has still characteristic zero. Our purpose is to determine the coefficients of an algebraic series in terms of the coefficients of a vanishing polynomial. We consider the following polynomial of degrees bounded by $d_x$ in $\underline{x}$ and by $d_y$ in $y$: 
$$ \begin{array}{lcl}
P(\underline{x},y)&=&\displaystyle\sum_{|\underline{i}|=0}^{d_x}\displaystyle\sum_{j=0}^{d_y}a_{\underline{i},j}\underline{x}^{\underline{i}}y^j ,\ \textrm{  with } P(\underline{x},y)\in K[\underline{x},y]\setminus\{0\}\\
&=& \displaystyle\sum_{|\underline{i}|=0}^{d_x}\pi_{\underline{i}}^P(y)\underline{x}^{\underline{i}}\\
&=& \displaystyle\sum_{j=0}^{d_y}a_{j}^P(\underline{x})y^j,
\end{array}$$
and a formal power series:
$$y_0=\displaystyle\sum_{\underline{n}\geq_{\mathrm{grlex}} (0,\ldots,0,1)}c_ {\underline{n}}\underline{x}^{\underline{n}},  \textrm{  with } y_0\in K[[\underline{x}]],\   c_{(0,\ldots,0,1)}\neq 0.$$
The field $K((\underline{x}))$ is endowed with the  graded lexicographic valuation $w$.\\

\begin{notation}\label{nota:P_k}
For any $\underline{k}\in\mathbb{N}^r$ and for any $Q(\underline{x},y)=\displaystyle\sum_{j=0}^da_j^Q(\underline{x})y^j\in K((x_1^{\mathbb{Z}},\ldots,x_r^{\mathbb{Z}}))^{\textrm{grlex}}[y]$, we denote: 
\begin{itemize}
\item $S(\underline{k})$ the successor element of $\underline{k}$ in $(\mathbb{N}^r,\leq_{\textrm{grlex}})$;
\item $w(Q):=\min\left\{w \left(a_j^Q(\underline{x})\right),\ j=0,..,d\right\}$;
\item $z_0:=0$ and for $\underline{k}\geq_{\textrm{grlex}} (0,\ldots,0,1)$, $z_{\underline{k}}:=\displaystyle\sum_{\underline{n}=(0,\ldots,0,1)}^{\underline{k}}c_{\underline{n}}\underline{x }^{\underline{n}}$;
\item $y_{\underline{k}}:=y_0-z_{\underline{k}}=\displaystyle\sum_{\underline{n}\geq_{\textrm{grlex}}S( \underline{k})}c_{\underline{n}}\underline{x}^{\underline{n}}$;
\item $Q_{\underline{k}}(\underline{x},y):=Q(\underline{x},z_{\underline{k}}+\underline{x}^{S(\underline{k})}y) =\displaystyle\sum_{\underline{i}\geq_{\textrm{grlex}}\underline{i}_{\underline{k}}}\pi^Q_{\underline{k},\underline{i}}(y)\underline{x}^{\underline{i}}$ where $\underline{i}_{\underline{k}}:=w( Q_{\underline{k}})$.
 Note that the sequence $(\underline{i}_{\underline{k}})_{\underline{k}\in\mathbb{N}^r}$ is nondecreasing since $Q_{S(\underline{k})}(\underline{x},y)=Q_{\underline{k}}(\underline{x},c_{S(\underline{k})}+\underline{x}^{\underline{n}}y)$ for  $\underline{n}=S^2({\underline{k}})-S({\underline{k}})>_{\textrm{grlex}}\underline{0}$, $\underline{n}\in \mathbb{Z}^r$.
\end{itemize}
\end{notation}

As for the one variable case i.e. $r=1$ (see e.g. \cite{walker_alg-curves}), we consider $y_0$ solution of the equation $P=0$ via an adaptation in several variables of the  algorithmic method of Newton-Puiseux, also with two stages:
\begin{enumerate}
\item a first stage of separation of the  solutions, which illustrates the following fact: $y_0$ may share an initial part with other roots of $P$.  But, if $y_0$ is a simple root of $P$, this step concerns \emph{only finitely many} of the first terms of $y_0$ since $w\left(\partial P/\partial y\,(\underline{x},y_0)\right)$ is finite.
\item a second stage of unique "automatic" resolution: for $y_0$ a simple root of $P$, once it has been separated from the other solutions, we will show that the remaining part of $y_0$ is a root of a strongly reduced  Henselian equation, in the sense of Definition \ref{defi:equ-hensel-red}, naturally derived from $P$ and an initial part of $y_0$.
\end{enumerate}

\begin{lemma}\label{lemme:double-simple}
\begin{itemize}
\item[(i)] The series $y_0$ is a root of  $P(\underline{x},y)$ if and only if the sequence $(\underline{i}_{\underline{k}})_{\underline{k}\in\mathbb{N}^r}$ where $\underline{i}_{\underline{k}}:=w( P_{\underline{k}})$ is strictly increasing.
\item[(ii)] The series $y_0$ is a simple root of $P(\underline{x},y)$ if and only if the sequence $(\underline{i}_{\underline{k}})_{\underline{k}\in\mathbb{N}^r}$ is strictly increasing and there exists a lowest multi-index $\underline{k}_0$ such that $\underline{i}_{S(\underline{k}_0)}=\underline{i}_{\underline{k}_0}-S(\underline{k}_0)+S^2(\underline{k}_0)$. In that case, one has that  $\underline{i}_{S(\underline{k})}=\underline{i}_{\underline{k}}-S(\underline{k})+S^2(\underline{k})=\underline{i}_{\underline{k}_0}-S(\underline{k}_0)+S^2(\underline{k})$  for any $\underline{k}\geq_{\textrm{grlex}} \underline{k}_0$. 
\end{itemize}
\end{lemma}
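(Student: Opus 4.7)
The plan is to build everything on the identity $P(\underline{x},y_0)=P_{\underline{k}}(\underline{x},y_{\underline{k}}/\underline{x}^{S(\underline{k})})$, which holds for every $\underline{k}$ since both sides equal $P(\underline{x},z_{\underline{k}}+y_{\underline{k}})$. Because $y_{\underline{k}}/\underline{x}^{S(\underline{k})}$ has $w$-valuation $\geq_{\mathrm{grlex}}\underline{0}$, substituting it into $P_{\underline{k}}(\underline{x},y)=\sum_{\underline{i}\geq_{\mathrm{grlex}}\underline{i}_{\underline{k}}}\pi^P_{\underline{k},\underline{i}}(y)\,\underline{x}^{\underline{i}}$ produces a series of $w$-valuation $\geq_{\mathrm{grlex}}\underline{i}_{\underline{k}}$, and only the term with $\underline{i}=\underline{i}_{\underline{k}}$ can contribute to the $\underline{x}^{\underline{i}_{\underline{k}}}$ coefficient, which is therefore the scalar $\pi^P_{\underline{k},\underline{i}_{\underline{k}}}(c_{S(\underline{k})})$. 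Using moreover the recursion $P_{S(\underline{k})}(\underline{x},y')=P_{\underline{k}}(\underline{x},c_{S(\underline{k})}+\underline{x}^{\underline{n}}y')$ with $\underline{n}:=S^2(\underline{k})-S(\underline{k})>_{\mathrm{grlex}}\underline{0}$, an index-by-index inspection of the coefficients of $P_{S(\underline{k})}$ shows $\underline{i}_{S(\underline{k})}=\underline{i}_{\underline{k}}$ precisely when $\pi^P_{\underline{k},\underline{i}_{\underline{k}}}(c_{S(\underline{k})})\neq 0$, and $\underline{i}_{S(\underline{k})}>_{\mathrm{grlex}}\underline{i}_{\underline{k}}$ otherwise.

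Part (i) then follows in one step: a sequence $(\underline{i}_{\underline{k}})$ which is strictly increasing in $\leq_{\mathrm{grlex}}$ is cofinal in $(\mathbb{Z}^r,\leq_{\mathrm{grlex}})$ since each total-degree level is finite, so combined with $w(P(\underline{x},y_0))\geq_{\mathrm{grlex}}\underline{i}_{\underline{k}}$ for all $\underline{k}$ this forces $P(\underline{x},y_0)=0$; conversely, the vanishing of $P(\underline{x},y_0)$ kills every coefficient $\pi^P_{\underline{k},\underline{i}_{\underline{k}}}(c_{S(\underline{k})})$, yielding strict increase at every step.

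For (ii) I would re-expand $P_{\underline{k}}$ in Taylor form in $y$ around $z_{\underline{k}}$: $P_{\underline{k}}(\underline{x},y)=\sum_{s\geq 0}\frac{\underline{x}^{sS(\underline{k})}y^s}{s!}\frac{\partial^sP}{\partial y^s}(\underline{x},z_{\underline{k}})$, yielding $\underline{i}_{\underline{k}}=\min_{s}\bigl[sS(\underline{k})+\beta_s(\underline{k})\bigr]$ with $\beta_s(\underline{k}):=w\bigl(\partial^sP/\partial y^s(\underline{x},z_{\underline{k}})\bigr)\geq_{\mathrm{grlex}}\underline{0}$. Assuming $y_0$ is a simple root, set $\underline{m}_0:=w(\partial P/\partial y(\underline{x},y_0))\in\mathbb{N}^r$. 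From $P(\underline{x},y_0)=0$, Taylor around $y_0$ gives $\beta_0(\underline{k})\geq_{\mathrm{grlex}}w(y_{\underline{k}})+\underline{m}_0\geq_{\mathrm{grlex}}S(\underline{k})+\underline{m}_0$, while $\beta_1(\underline{k})=\underline{m}_0$ as soon as $S(\underline{k})>_{\mathrm{grlex}}\underline{m}_0$, and $sS(\underline{k})+\beta_s(\underline{k})\geq_{\mathrm{grlex}}2S(\underline{k})$ for $s\geq 2$. For such $\underline{k}$ the minimum is realised at $s=1$ with value $S(\underline{k})+\underline{m}_0$, whence $\underline{i}_{\underline{k}}-S(\underline{k})=\underline{m}_0$ stabilises, giving the relation $\underline{i}_{S(\underline{k})}=\underline{i}_{\underline{k}}+S^2(\underline{k})-S(\underline{k})$ and the existence of a smallest $\underline{k}_0$.

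Conversely, assume the sequence is strictly increasing and a smallest $\underline{k}_0$ with the relation exists; by (i), $y_0$ is a root, so $\beta_0(\underline{k})\to\infty$. Set $\underline{m}:=\underline{i}_{\underline{k}_0}-S(\underline{k}_0)$. I propagate the relation to all $\underline{k}\geq_{\mathrm{grlex}}\underline{k}_0$ by induction: its validity at step $\underline{k}$ forces $(\pi^P_{\underline{k},\underline{i}_{\underline{k}}})'(c_{S(\underline{k})})\neq 0$, and computing $\pi^P_{S(\underline{k}),\underline{i}_{S(\underline{k})}}(y')=\pi^P_{\underline{k},\underline{i}_{\underline{k}}+\underline{n}}(c_{S(\underline{k})})+(\pi^P_{\underline{k},\underline{i}_{\underline{k}}})'(c_{S(\underline{k})})\,y'$ from the expansion of $P_{S(\underline{k})}$ shows that its derivative at $c_{S^2(\underline{k})}$ equals $(\pi^P_{\underline{k},\underline{i}_{\underline{k}}})'(c_{S(\underline{k})})\neq 0$, so the same equality holds at $S(\underline{k})$. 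This gives $\underline{i}_{\underline{k}}=S(\underline{k})+\underline{m}$ for all $\underline{k}\geq_{\mathrm{grlex}}\underline{k}_0$. Going back to $\underline{i}_{\underline{k}}=\min_s[sS(\underline{k})+\beta_s(\underline{k})]$, for $\underline{k}$ large enough that $\beta_0(\underline{k})>_{\mathrm{grlex}}S(\underline{k})+\underline{m}$ and $S(\underline{k})>_{\mathrm{grlex}}\underline{m}$, the minimum is forced at $s=1$, so $\beta_1(\underline{k})=\underline{m}$; passing to the limit yields $w(\partial P/\partial y(\underline{x},y_0))=\underline{m}$, which is finite, so $y_0$ is a simple root. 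The main obstacle is precisely the propagation step: carefully checking that the change of variable $y\mapsto c_{S(\underline{k})}+\underline{x}^{\underline{n}}y'$ turns the linear-in-$y$ initial polynomial into another linear one with the same nonzero leading derivative at the next constant $c_{S^2(\underline{k})}$, and that no competing minor contribution at indices strictly between $\underline{i}_{\underline{k}}$ and $\underline{i}_{\underline{k}}+\underline{n}$ spoils the picture.
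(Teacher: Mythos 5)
Your proof is correct in substance. Part (i) is essentially the paper's argument: both hinge on the fact that the coefficient of $\underline{x}^{\underline{i}_{\underline{k}}}$ in $P(\underline{x},y_0)$ is the scalar $\pi^P_{\underline{k},\underline{i}_{\underline{k}}}(c_{S(\underline{k})})$, whose vanishing or not decides whether $\underline{i}_{S(\underline{k})}>_{\mathrm{grlex}}\underline{i}_{\underline{k}}$ or the sequence freezes. For part (ii) you take a genuinely different route. The paper never introduces the quantities $\beta_s(\underline{k})$ nor the formula $\underline{i}_{\underline{k}}=\min_s[sS(\underline{k})+\beta_s(\underline{k})]$; instead it notes that $\frac{\partial P_{\underline{k}}}{\partial y}(\underline{x},y)=\underline{x}^{S(\underline{k})}\left(\frac{\partial P}{\partial y}\right)_{\underline{k}}(\underline{x},y)$ and that $(\pi^P_{\underline{k},\underline{i}_{\underline{k}}})'\nequiv 0$ (since $\pi^P_{\underline{k},\underline{i}_{\underline{k}}}$ is a nonzero polynomial vanishing at $c_{S(\underline{k})}$ when $y_0$ is a root), so that $w\bigl(\left(\partial P/\partial y\right)_{\underline{k}}\bigr)=\underline{i}_{\underline{k}}-S(\underline{k})$; part (ii) then reduces to applying part (i) to $\partial P/\partial y$: $y_0$ is a double root if and only if the shifted sequence $(\underline{i}_{\underline{k}}-S(\underline{k}))_{\underline{k}}$ is also strictly increasing, i.e. $(\pi^P_{\underline{k},\underline{i}_{\underline{k}}})'(c_{S(\underline{k})})=0$ for every $\underline{k}$. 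Your approach is more computational but yields directly the identification of the stabilized value $\underline{i}_{\underline{k}}-S(\underline{k})$ with $w(\partial P/\partial y(\underline{x},y_0))$, which the paper only extracts afterwards (in the proof of the next lemma); the paper's reduction of (ii) to (i) is shorter and avoids the bookkeeping among the $s=0$, $s=1$ and $s\geq 2$ contributions for large $\underline{k}$.

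Two points deserve attention. First, the obstacle you flag at the end does close, and the closing argument is the one you already used for (i): if some intermediate coefficient $\pi^P_{S(\underline{k}),\underline{j}}(c_{S^2(\underline{k})})$ with $\underline{i}_{S(\underline{k})}<_{\mathrm{grlex}}\underline{j}<_{\mathrm{grlex}}\underline{i}_{S(\underline{k})}+S^3(\underline{k})-S^2(\underline{k})$ were a nonzero constant, it would survive every further substitution $y\mapsto c+\underline{x}^{\underline{m}}y$ and hence bound $\underline{i}_{\underline{l}}\leq_{\mathrm{grlex}}\underline{j}$ for all subsequent $\underline{l}$, contradicting the strict increase already assumed; this is exactly how the paper rules out the intermediate terms via its Formulas (\ref{equ:taylor}) and (\ref{equ:p_{k+1}}). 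Second, your claim that validity of the relation at step $\underline{k}$ \emph{forces} $(\pi^P_{\underline{k},\underline{i}_{\underline{k}}})'(c_{S(\underline{k})})\neq 0$ is not a consequence of the relation alone: it needs the root property, namely that the affine polynomial $\pi^P_{S(\underline{k}),\underline{i}_{S(\underline{k})}}$ is not identically zero yet vanishes at $c_{S^2(\underline{k})}$, which forbids its linear coefficient $(\pi^P_{\underline{k},\underline{i}_{\underline{k}}})'(c_{S(\underline{k})})$ from being zero. With these two verifications written out, your proof is complete.
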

\begin{proof}
(i) Note that for any $\underline{k}\in\mathbb{N}^r,\ $ $\underline{i}_{\underline{k}}\leq_{\textrm{grlex}} w(P_{\underline{k}}(\underline{x},0)=w(P(\underline{x},z_{\underline{k}}))$. Hence, if the sequence $(\underline{i}_{\underline{k}})_{\underline{k}\in\mathbb{N}^r}$ is strictly increasing in $(\mathbb{N}^r,\leq_{\textrm{grlex}})$, it tends to  $+\infty$ (i.e.  $\forall \underline{n}\in\mathbb{N}^r$, $\exists \underline{k}_0\in\mathbb{N}^r$, $\forall \underline{k}\geq_{\mathrm{grlex}} \underline{k}_0$, $\underline{i}_{\underline{k}}\geq_{\mathrm{grlex}}\underline{n}$),  and so does $w(P(\underline{x},z_{\underline{k}}))$. The series $y_0$ is indeed a root of $P(\underline{x},y)$. Conversely, suppose that there exist $\underline{k}<_{\textrm{grlex}}\underline{l}$ such that $\underline{i}_{\underline{k}}\geq_{\textrm{grlex}} \underline{i}_{\underline{l}}$.
Since the sequence $(\underline{i}_{\underline{n}})_{\underline{n}\in\mathbb{N}^r}$ is nondecreasing,  one has that $\underline{i}_{\underline{l}}\geq \underline{i}_{\underline{k}}$, so  $\underline{i}_{\underline{l}}=\underline{i}_{\underline{k}}$.
We apply the multivariate Taylor's formula to $P_{\underline{j}}(\underline{x},y)$ for $\underline{j}>_{\textrm{grlex}}\underline{k}$:
\begin{equation}\label{equ:taylor}
\begin{array}{lcl}
P_{\underline{j}}(\underline{x},y)&=&P_{\underline{k}}\left(\underline{x},c_{S(\underline{k})}+ c_{S^2(\underline{k})}\underline{x}^{S^2(\underline{k})-S(\underline{k})} +\cdots+c_{\underline{j}}\underline{x}^{\underline{j}-S(\underline{k})}+\underline{x}^{S(\underline{j})-S(\underline{k})}y\right)\\
&=& \displaystyle\sum_{\underline{i}\geq_{\textrm{grlex}}\underline{i}_{\underline{k}}} \pi^P_{\underline{k},\underline{i}} \left(c_{S(\underline{k})}+ c_{S^2(\underline{k})}\underline{x}^{S²(\underline{k})-S(\underline{k})} +\cdots+\underline{x}^{S(\underline{j})-S(\underline{k})}y\right) \underline{x}^{\underline{i}}\\
&=& \pi^P_{\underline{k},\underline{i}_{\underline{k}}}(c_{S(\underline{k})})x^{\underline{i}_{\underline{k}}}+b_{S(\underline{i}_{\underline{k}})} x^{S(\underline{i}_{\underline{k}})}+ \cdots.
\end{array}
\end{equation}
Note that $b_{S(\underline{i}_{\underline{k}})}= \pi^P_{\underline{k},S(\underline{i}_{\underline{k}})}(c_{S(\underline{k})})$ or  $b_{S(\underline{i}_{\underline{k}})}=  (\pi^P_{\underline{k},\underline{i}_{\underline{k}}} )'(c_{S(\underline{k})})\, c_{S^2(\underline{k})}+\pi^P_{\underline{k},S(\underline{i}_{\underline{k}})}(c_{S(\underline{k})})$ depending on whether $S(\underline{i}_{\underline{k}})<_{\textrm{grlex}} \underline{i}_{\underline{k}}+S^2(\underline{k})-S(\underline{k})$ or $S(\underline{i}_{\underline{k}})=\underline{i}_{\underline{k}}+S^2(\underline{k})-S(\underline{k})$.
For $\underline{j}=\underline{l}$, we deduce that $\pi^P_{\underline{k},\underline{i}_{\underline{k}}}(c_{S(\underline{k})})\neq 0$. This implies that for any $\underline{j}>_{\mathrm{grlex}}\underline{k}$, $\underline{i}_{\underline{j}}=\underline{i}_{\underline{k}}$
and    $w\left(P_{\underline{j}}(\underline{x},0)\right)=w\left(P(\underline{x},z_{\underline{j}})\right)=\underline{i}_{\underline{k}}$. Hence $w\left(P(\underline{x},y_0)\right)=\underline{i}_{\underline{k}}\neq +\infty$.
\vspace{2 pt} 

\noindent (ii) The series $y_0$ is a double root of $P$ if and only if it is a root of $P$ and $\partial P/\partial y$. Let $y_0$ be a root of $P$. Let us expand the multivariate Taylor's formula (\ref{equ:taylor}) for  $\underline{j}=S(\underline{k})$: 
\begin{equation}\label{equ:p_{k+1}}
\begin{array}{l}
\begin{array}{lcl}
P_{S(\underline{k})}(\underline{x},y)
&=&  \pi^P_{\underline{k},\underline{i}_{\underline{k}}}(c_{S(\underline{k})})\underline{x}^{\underline{i}_{\underline{k}}}+ \pi^P_{\underline{k},S(\underline{i}_{\underline{k}})}(c_{S(\underline{k})})\underline{x}^{S(\underline{i}_{\underline{k}})}+\cdots\\
&& +\left[(\pi^P_{\underline{k},\underline{i}_{\underline{k}}})'(c_{S(\underline{k})})\, y+\pi^P_{\underline{k},\underline{i}_{\underline{k}}+S^2(\underline{k})-S(\underline{k})}(c_{S(\underline{k})})\right]\underline{x}^{\underline{i}_{\underline{k}}+S^2(\underline{k})-S(\underline{k})}+\cdots +
\end{array}\\
\left[\displaystyle\frac{(\pi^P_{\underline{k},\underline{i}_{\underline{k}}})''(c_{S(\underline{k})})}{2}\, y^2+(\pi^P_{\underline{k},\underline{i}_{\underline{k}}+S^2(\underline{k})-S(\underline{k})})'(c_{S(\underline{k})})\,y+\pi^P_{\underline{k},\underline{i}_{\underline{k}}+2(S^2(\underline{k})-S(\underline{k}))}(c_{S(\underline{k})})\right]\underline{x}^{\underline{i}_{\underline{k}}+2(S^2(\underline{k})-S(\underline{k}))}+\cdots
\end{array}\\
\end{equation}

\noindent Note that if $S(i_{\underline{k}})=\underline{i}_{\underline{k}}+S^2(\underline{k})-S(\underline{k})$, then there are no intermediary terms between the first one and the one with valuation $\underline{i}_{\underline{k}}+S^2(\underline{k})-S(\underline{k})$.
We have by definition of $P_{\underline{k}}$:
\begin{center}
$\displaystyle\frac{\partial P_{\underline{k}}}{\partial y}(\underline{x},y)=\underline{x}^{S(\underline{k})}\left(\displaystyle\frac{\partial P}{\partial y}\right)_{\underline{k}}(\underline{x},y)=\displaystyle\sum_{\underline{i}\geq_{\textrm{grlex}} \underline{i}_{\underline{k}}}(\pi^P_{\underline{k},\underline{i}})'(y)\underline{x}^{\underline{i}}$
\end{center}
One has that $\pi^P_{\underline{k},\underline{i}_{\underline{k}}}(y)\nequiv 0$ and $\pi^P_{\underline{k},\underline{i}_{\underline{k}}}(c_{S(\underline{k})})=0$ (see the point (i) above), so $(\pi^P_{\underline{k},\underline{i}_{\underline{k}}})'(y)\nequiv 0$. Thus: \begin{equation}\label{equ:val-DP} w\left(\left(\displaystyle\frac{\partial P}{\partial y}\right)_{\underline{k}}\right)=\underline{i}_{\underline{k}}-S(\underline{k}).
\end{equation}
 We perform the Taylor's expansion of   $\left(\displaystyle\frac{\partial P}{\partial y}\right)_{S(\underline{k})}$:
$$\begin{array}{lcl}
\left(\displaystyle\frac{\partial P}{\partial y}\right)_{S(\underline{k})}(\underline{x},y)&=& \left(\displaystyle\frac{\partial P}{\partial y}\right)_{\underline{k}}\left(\underline{x},c_{S(\underline{k})}+\underline{x}^{S^2(\underline{k})-S(\underline{k})}y\right)\\
&=&( \pi^P_{\underline{k},\underline{i}_{\underline{k}}})'(c_{S(\underline{k})})x^{\underline{i}_{\underline{k}}-S(\underline{k})}+\cdots\\
&&+ \left[(\pi^P_{\underline{k},\underline{i}_{\underline{k}}})''(c_{S(\underline{k})})\, y+(\pi^P_{\underline{k},\underline{i}_{\underline{k}}+S^2(\underline{k})-S(\underline{k})})'(c_{S(\underline{k})})\right]\underline{x}^{\underline{i}_{\underline{k}}+S^2(\underline{k})-2S(\underline{k})}+\cdots.
\end{array}$$
By the point (i) applied to  $\displaystyle\frac{\partial P}{\partial y}$, if $y_0$ is a double root $P$, we must have $ (\pi^P_{\underline{k},\underline{i}_{\underline{k}}})'(c_{S(\underline{k})})=0$. Moreover, if  $\pi^P_{\underline{k},\underline{i}}(c_{S(\underline{k})})\neq 0$ for some $\underline{i}\in\left\{S(\underline{i}_{\underline{k}}),\,\ldots\,,\,\underline{i}_{\underline{k}}+S^2(\underline{k})-S(\underline{k})\right\}$, by Formula (\ref{equ:p_{k+1}}) we would have $i_{S(\underline{k})}\leq_{\textrm{grlex}}\underline{i}_{\underline{k}}+S^2(\underline{k})-S(\underline{k})$ and even $\underline{i}_{\underline{j}}\leq_{\textrm{grlex}}\underline{i}_{\underline{k}}+S^2(\underline{k})-S(\underline{k})$ for every  $\underline{j}>_{\textrm{grlex}}\underline{k}$ according to Formula (\ref{equ:taylor}): $y_0$ could not be a root of $P$. So, $\pi^P_{\underline{k},\underline{i}}(c_{S(\underline{k})})= 0$ for $\underline{i}=S(\underline{i}_{\underline{k}}),..,\underline{i}_{\underline{k}}+S^2(\underline{k})-S(\underline{k})$, and, accordingly, $i_{S(\underline{k})}>_{\textrm{grlex}} \underline{i}_{\underline{k}}+S^2(\underline{k})-S(\underline{k})$.\\
If $y_0$ is a simple root of $P$, from the point (i) and its proof there exists a lowest  $\underline{k}_0$ such that the sequence $(\underline{i}_{\underline{k}}-S(\underline{k}))_{\underline{k}\in\mathbb{N}^r}$ is no longer strictly increasing, that is to say, such that  $(\pi^P_{\underline{k}_0,\underline{i}_{\underline{k}_0}})'(c_{S(\underline{k}_0)})\neq 0$. For any $\underline{k}\geq_{\mathrm{grlex}} \underline{k}_0$, we consider the  Taylor's expansion of $\left(\displaystyle\frac{\partial P}{\partial y}\right)_{S(\underline{k})}=\left(\displaystyle\frac{\partial P}{\partial y}\right)_{\underline{k}_0}(c_{S(\underline{k}_0)}+\cdots+\underline{x}^{S^2(\underline{k})-S(\underline{k_0 })}y)$:
\begin{equation}\label{equ:partialP}
\begin{array}{l}
\left(\displaystyle\frac{\partial P}{\partial y}\right)_{S(\underline{k})}(\underline{x},y)\,=\,(\pi^P_{\underline{k}_0,\underline{i}_{\underline{k}_0}})'(c_{S(\underline{k}_0)})x^{\underline{i}_{\underline{k}_0}-S(\underline{k}_0)}+\cdots\\
\ \ \ \ \ \  \ \ \  \ \ \ +\left[(\pi^P_{\underline{k}_0,\underline{i}_{\underline{k}_0}})''(c_{S(\underline{k}_0)})c_{S^2(k_0)}+(\pi^P_{\underline{k}_0, \underline{i}_{\underline{k}_0}+S^2(\underline{k}_0)-S(\underline{k}_0)})' (c_{S(\underline{k}_0)})\right]x^{\underline{i}_{\underline{k}_0}+ S^2(\underline{k}_0)-S(\underline{k}_0)} +\cdots
\end{array}
\end{equation}
and we get that:
\begin{equation}\label{equ:taylor-deriv}
w\left(\displaystyle\frac{\partial P}{\partial y}\left(z_{S(\underline{k})},0\right) \right)=w\left(\left(\displaystyle\frac{\partial P}{\partial y}\right)_{S(\underline{k})}(\underline{x},0)\right)=w\left(\left(\displaystyle\frac{\partial P}{\partial y}\right)_{S(\underline{k})}\right)=\underline{i}_{\underline{k}_0}-S(\underline{k}_0).
\end{equation} 
By Equation (\ref{equ:val-DP}), we obtain that $w\left(\left(\displaystyle\frac{\partial P}{\partial y}\right)_{S(\underline{k})}\right)=\underline{i}_{S(\underline{k})}-S^2(\underline{k})$. So, $\underline{i}_{S(k)}=\underline{i}_{\underline{k}_0}-S(\underline{k}_0)+S^2(\underline{k})$. As every $\underline{k}>_{\mathrm{grlex}} \underline{k}_0$ is the successor of some $\underline{k}'\geq_{\mathrm{grlex}} \underline{k}_0$, we get that for every $\underline{k}\geq_{\mathrm{grlex}} \underline{k}_0$, $\underline{i}_{\underline{k}}-S(\underline{k})=\underline{i}_{\underline{k}_0}-S(\underline{k}_0)$. So, finally, $\underline{i}_{S(k)}=\underline{i}_{\underline{k}}-S(\underline{k})+S^2(\underline{k})$ as desired.
\end{proof}

Resuming the notations of Theorem \ref{theo:wilc} and of Lemma \ref{lemme:double-simple}, the  multi-index $\underline{k}_0$ represents the length of the initial part in the stage of separation of the solutions. In the following lemma, we bound it using Lemma \ref{lemme:ordreQ} or the discriminant $\Delta_P$ of $P$. 

\begin{lemma}\label{lemme:partie-princ}
Let $y_0=\displaystyle\sum_{\underline{n}\geq_{\mathrm{grlex}} (0,\ldots,0,1)}c_ {\underline{n}}\underline{x}^{\underline{n}},\   c_{(0,\ldots,0,1)}\neq 0$, be a simple root of a nonzero polynomial $P(\underline{x},y)$ with $\deg_x (P)\leq d_x$ and $\deg_y(P)\leq d_y$.
The  multi-index $\underline{k}_0$ of Lemma \ref{lemme:double-simple} verifies that:
$$|\underline{k}_0|\leq 2d_x d_y.$$
Moreover, if $P$ has only simple roots: 
$$|\underline{k}_0|\leq d_x(2\,d_y-1).$$
 \end{lemma}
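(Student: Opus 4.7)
The plan is to control $|\underline{k}_0|$ through the auxiliary sequence $\underline{j}_{\underline{k}}:=\underline{i}_{\underline{k}}-S(\underline{k})$, which by (\ref{equ:val-DP}) equals $w((\partial P/\partial y)_{\underline{k}})$ and therefore lies in $\mathbb{N}^r$. First I would compare $\underline{k}_0$ to $\underline{j}_{\underline{k}_0}$ via a rank argument inside the well-order $(\mathbb{N}^r,\leq_{\mathrm{grlex}})$, and then bound $|\underline{j}_{\underline{k}_0}|$ by applying Lemma \ref{lemme:ordreQ} to $\partial P/\partial y$ (or, in the squarefree case, the discriminant of $P$).

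For the rank step: the definition of $\underline{k}_0$ in Lemma \ref{lemme:double-simple}(ii), together with the Taylor expansion (\ref{equ:partialP}) appearing in its proof, shows that for every $\underline{k}<_{\mathrm{grlex}}\underline{k}_0$ the coefficient $(\pi^P_{\underline{k},\underline{i}_{\underline{k}}})'(c_{S(\underline{k})})$ vanishes, so the leading term of $(\partial P/\partial y)_{S(\underline{k})}$ cancels and $\underline{j}_{S(\underline{k})}>_{\mathrm{grlex}}\underline{j}_{\underline{k}}$. Enumerating $(\mathbb{N}^r,\leq_{\mathrm{grlex}})$ as $(\underline{e}_n)_{n\in\mathbb{N}}$ with $\underline{e}_0=\underline{0}$ and writing $\underline{k}_0=\underline{e}_{n_0}$, this yields a strictly increasing sequence $\underline{j}_{\underline{e}_0}<_{\mathrm{grlex}}\cdots<_{\mathrm{grlex}}\underline{j}_{\underline{e}_{n_0}}$ inside a well-order of type $\omega$. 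The standard fact that any strictly increasing self-map $f$ of $(\mathbb{N},\leq)$ satisfies $f(n)\geq n$ then gives $\underline{j}_{\underline{k}_0}\geq_{\mathrm{grlex}}\underline{e}_{n_0}=\underline{k}_0$, whence $|\underline{k}_0|\leq|\underline{j}_{\underline{k}_0}|$ by definition of $\leq_{\mathrm{grlex}}$.

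To estimate $|\underline{j}_{\underline{k}_0}|$: by (\ref{equ:taylor-deriv}), $w(\partial P/\partial y(\underline{x},z_{S(\underline{k})}))=\underline{j}_{\underline{k}_0}$ for all $\underline{k}\geq_{\mathrm{grlex}}\underline{k}_0$. Since $w(y_{S(\underline{k})})\geq_{\mathrm{grlex}}S^2(\underline{k})$ diverges as $\underline{k}\to+\infty$, a Taylor expansion of $\partial P/\partial y$ around $z_{S(\underline{k})}$ shows that $\partial P/\partial y(\underline{x},y_0)-\partial P/\partial y(\underline{x},z_{S(\underline{k})})$ has arbitrarily large $w$-valuation, and therefore $w(\partial P/\partial y(\underline{x},y_0))=\underline{j}_{\underline{k}_0}$. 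As $y_0$ is a simple root, $\partial P/\partial y(\underline{x},y_0)\neq 0$, so Lemma \ref{lemme:ordreQ} applied with $Q=\partial P/\partial y$ (of bidegrees $\leq d_x$, $\leq d_y-1$) gives $|\underline{j}_{\underline{k}_0}|\leq 2d_xd_y$ and hence $|\underline{k}_0|\leq 2d_xd_y$.

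For the sharper bound, when $P$ has only simple roots, $P$ and $\partial P/\partial y$ are coprime in $K(\underline{x})[y]$ and the discriminant $\Delta_P:=\mathrm{Res}_y(P,\partial P/\partial y)\in K[\underline{x}]\setminus\{0\}$ has degree at most $d_x(2d_y-1)$, as a Sylvester determinant of size $2d_y-1$ with entries of degree $\leq d_x$. A Bezout identity $A\cdot P+B\cdot\partial P/\partial y=\Delta_P$ in $K[\underline{x}][y]$ evaluated at $y_0$ yields $B(\underline{x},y_0)\cdot\partial P/\partial y(\underline{x},y_0)=\Delta_P(\underline{x})$, whence $|\underline{j}_{\underline{k}_0}|\leq d_x(2d_y-1)$ and so $|\underline{k}_0|\leq d_x(2d_y-1)$. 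The delicate point is the rank comparison: it is precisely the fact that grlex makes $\mathbb{N}^r$ order-isomorphic to $\omega$ that converts the pointwise strict inequality on the $\underline{j}_{\underline{k}}$'s into a global linear bound on $|\underline{k}_0|$.
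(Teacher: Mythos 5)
Your proposal is correct and follows essentially the same route as the paper: both reduce the problem to bounding $\mathrm{ord}_{\underline{x}}\,\partial P/\partial y(\underline{x},y_0)$ via Lemma \ref{lemme:ordreQ} (or the discriminant in the squarefree case), after establishing $\underline{k}_0\leq_{\mathrm{grlex}} w(\partial P/\partial y(\underline{x},y_0))=\underline{i}_{\underline{k}_0}-S(\underline{k}_0)$ from the strict increase of $(\underline{i}_{\underline{k}}-S(\underline{k}))_{\underline{k}}$ up to $\underline{k}_0$. The only difference is cosmetic: you spell out the order-type-$\omega$ argument ($f(n)\geq n$ for a strictly increasing self-map) that the paper leaves implicit when it writes $w\bigl((\partial P/\partial y)_{S(\underline{k}_0)}\bigr)\geq_{\mathrm{grlex}}\underline{k}_0$.
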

 \begin{proof}
By Lemma \ref{lemme:ordreQ}, since $P(\underline{x},y_0)=0$ and $ \displaystyle\frac{\partial P}{\partial y}(\underline{x},y_0)\neq 0$, one has that:
\begin{equation}\label{equ:derivee}
\mathrm{ord}_{\underline{x}} \displaystyle\frac{\partial P}{\partial y}(\underline{x},y_0)\leq 2d_xd_y.
\end{equation}

But, by definition of $\underline{k}_0$ and by Formula (\ref{equ:taylor-deriv}): $$w\left( \displaystyle\frac{\partial P}{\partial y}\left(\underline{x},z_{S(\underline{k})}\right)\right)=w\left(\displaystyle\frac{\partial P}{\partial y}\left(\underline{x},z_{S(\underline{k}_0)}\right)\right)=\underline{i}_{\underline{k}_0}-S(\underline{k}_0)$$ for any $\underline{k}\geq_{\mathrm{grlex}} \underline{k}_0$. So, $w\left(\displaystyle\frac{\partial P}{\partial y}(\underline{x},y_0)\right)=w\left(\displaystyle\frac{\partial P}{\partial y}(\underline{x},z_{S(\underline{k}_0)})\right)$. 
Moreover,  by minimality of $\underline{k}_0$, the sequence $(\underline{i}_{\underline{k}}-S(\underline{k}))_{\underline{k}}$ is strictly increasing up to $\underline{k}_0$, so by Formula (\ref{equ:val-DP}): $$w\left( \displaystyle\frac{\partial P}{\partial y}(\underline{x},y_0)\right)=w\left(\displaystyle\frac{\partial P}{\partial y}(\underline{x},z_{S(\underline{k}_0)})\right)=w\left(\left(\displaystyle\frac{\partial P}{\partial y}\right)_{S(\underline{k}_0)}(\underline{x},0)\right)\geq_{\mathrm{grlex}} w\left(\left(\displaystyle\frac{\partial P}{\partial y}\right)_{S(\underline{k}_0)}\right)\geq_{\mathrm{grlex}} \underline{k}_0.$$
So:
$$\left|\underline{k}_0\right|\leq \left|w\left( \displaystyle\frac{\partial P}{\partial y}(\underline{x},y_0)\right)\right|=\mathrm{ord}_{\underline{x}} \displaystyle\frac{\partial P}{\partial y}(\underline{x},y_0).$$
 Hence we obtain by (\ref{equ:derivee}) as desired that:
$$|\underline{k}_0| \leq 2d_{\underline{x}}d_y.$$ \\
In the case where  $P$  has only  simple roots, as in the proof of   Lemma \ref{lemme:ordreQ},  $\mathrm{ord}_{\underline{x}} \displaystyle\frac{\partial P}{\partial y}(\underline{x},y_0)$ is bounded by the degree of the resultant of $P$ and $\displaystyle\frac{\partial P}{\partial y}$, say the discriminant $\Delta_P$  of $P$, which is bounded by $d_x(2\,d_y-1)$. 
 \end{proof}

\begin{notation}\label{nota:omega_0}
	Resuming Notation \ref{nota:P_k} and the content of Lemma \ref{lemme:double-simple}, we set: $$\omega_0:=(\pi^P_{\underline{k}_0,\underline{i}_{\underline{k}_0}})'(c_{S(\underline{k}_0)}).$$
	By Formula (\ref{equ:partialP}), we note that $$
	\left(\displaystyle\frac{\partial P}{\partial y}\right)(\underline{x},y_0)=\omega_0\,x^{\underline{i}_{\underline{k}_0}-S(\underline{k}_0)}+\cdots.$$
	Thus, $\omega_0$ is the  initial coefficient of $\left(\displaystyle\frac{\partial P}{\partial y}\right)(\underline{x},y_0)$ with respect to $\leq_{\mathrm{grlex}}$, hence $\omega_0\neq 0$.
\end{notation}

\begin{theo}\label{theo:FS}
Consider the following nonzero polynomial in $K\left[\underline{x},y\right]$ of total degree in $\underline{x}$ bounded by $d_x$ and degree in $y$ bounded by $d_y$: 
$$ P(\underline{x},y)=\displaystyle\sum_{|\underline{i}|=0}^{d_x}\displaystyle\sum_{j=0}^{d_y}a_{\underline{i},j}\underline{x}^{\underline{i}}y^j = \displaystyle\sum_{\underline{i}\geq_{\mathrm{grlex}} \underline{0}} \pi^P_{\underline{i}}(y)\underline{x}^{\underline{i}},$$
and a formal power series which is a simple root:
$$y_0=\displaystyle\sum_{\underline{n}>_{\mathrm{grlex}} \underline{0}}c_{\underline{n}}\underline{x}^{\underline{n}}\ \in K\left[\left[\underline{x}\right]\right],\   c_{(0,\ldots,0,1)}\neq 0.$$
Resuming Notations \ref{nota:P_k} and \ref{nota:omega_0} and the content of Lemma \ref{lemme:double-simple}, recall that \\ $\omega_0:=(\pi^P_{\underline{k}_0,\underline{i}_{\underline{k}_0}})'(c_{S(\underline{k}_0)})\neq 0$.
Then, for any $\underline{k}>_{\mathrm{grlex}}\underline{k}_0$:
\begin{itemize}
\item either the polynomial $z_{S(\underline{k})}=\displaystyle\sum_{\underline{n}=(0,\ldots,0,1)}^{S(\underline{k})}c_{\underline{n}}\underline{x}^{\underline{n}}$ is a solution of $P(\underline{x},y)=0$;
\item or the multivariate Laurent polynomial $ _{\underline{k}}R(\underline{x},y):=\displaystyle\frac{P_{\underline{k}}(\underline{x},y+c_{S(\underline{k})})}{-\omega_0\underline{x}^{\underline{i}_{\underline{k}}}}=-y+\, _{\underline{k}}Q(\underline{x},y)$  defines a strongly reduced Henselian equation:
$$ y=\, _{\underline{k}}Q(\underline{x},y)$$
as in Definition \ref{defi:equ-hensel-red} and satisfied by:
$$t_{S(\underline{k})}:=\frac{y_0-z_{S(\underline{k})}}{\underline{x}^{S(\underline{k})}}=c_{S^2(\underline{k})}\underline{x}^{S^2(\underline{k})-S(\underline{k})}+c_{S^3(\underline{k})}\underline{x}^{S^3(\underline{k})-S(\underline{k})}+\cdots.$$
\end{itemize} 
\end{theo}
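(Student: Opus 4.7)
The idea is to substitute $y = c_{S(\underline{k})} + \underline{x}^{S(\underline{k})} t$ into $P(\underline{x},y)=0$, divide by the leading $\underline{x}$-monomial of the resulting $y$-linear part, and recognize the outcome as a strongly reduced Henselian equation admitting $t_{S(\underline{k})}$ as its unique solution.

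First I would record the key identity coming from Notation~\ref{nota:P_k}:
\[
P_{\underline{k}}(\underline{x},y+c_{S(\underline{k})})=P\bigl(\underline{x},z_{\underline{k}}+\underline{x}^{S(\underline{k})}(y+c_{S(\underline{k})})\bigr)=P\bigl(\underline{x},z_{S(\underline{k})}+\underline{x}^{S(\underline{k})}y\bigr).
\]
Substituting $y=t_{S(\underline{k})}=(y_0-z_{S(\underline{k})})/\underline{x}^{S(\underline{k})}$ yields $P(\underline{x},y_0)=0$, so ${}_{\underline{k}}R(\underline{x},t_{S(\underline{k})})=0$, i.e.\ $t_{S(\underline{k})}={}_{\underline{k}}Q(\underline{x},t_{S(\underline{k})})$. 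Thus $t_{S(\underline{k})}$ automatically solves the candidate Henselian equation, and what remains is to check that this candidate is in fact strongly reduced Henselian.

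To that end, I would use the Taylor expansion
\[
P_{\underline{k}}(\underline{x},y+c_{S(\underline{k})})=\sum_{j\geq 0}\frac{\underline{x}^{jS(\underline{k})}}{j!}\,\frac{\partial^j P}{\partial y^j}(\underline{x},z_{S(\underline{k})})\,y^j
\]
and inspect the coefficients. The $y^0$-coefficient $P(\underline{x},z_{S(\underline{k})})=P_{S(\underline{k})}(\underline{x},0)$ has grlex-valuation at least $\underline{i}_{S(\underline{k})}=\underline{i}_{\underline{k}}+S^2(\underline{k})-S(\underline{k})$ by Lemma~\ref{lemme:double-simple}(ii); it vanishes precisely in the first case of the dichotomy (when $z_{S(\underline{k})}$ is a root of $P$), and otherwise ${}_{\underline{k}}Q(\underline{x},0)\not\equiv 0$ with grlex-valuation $\geq_{\mathrm{grlex}} S^2(\underline{k})-S(\underline{k})>_{\mathrm{grlex}}\underline{0}$. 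The $y^1$-coefficient $\underline{x}^{S(\underline{k})}(\partial P/\partial y)(\underline{x},z_{S(\underline{k})})$ has leading $\leq_{\mathrm{grlex}}$-term $\omega_0\,\underline{x}^{\underline{i}_{\underline{k}}}$ by Notation~\ref{nota:omega_0} and formula~(\ref{equ:taylor-deriv}); dividing by $-\omega_0\,\underline{x}^{\underline{i}_{\underline{k}}}$ therefore produces $-1$ plus a part of strictly positive grlex-valuation, so that writing $_{\underline{k}}R=-y+{}_{\underline{k}}Q$ gives a $y^1$-coefficient of ${}_{\underline{k}}Q$ with $w(\cdot)>_{\mathrm{grlex}}\underline{0}$.

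The main obstacle I anticipate is the control of the $y^j$-coefficients of ${}_{\underline{k}}Q$ for $j\geq 2$: each equals $-\underline{x}^{jS(\underline{k})-\underline{i}_{\underline{k}}}/(j!\,\omega_0)\cdot(\partial^j P/\partial y^j)(\underline{x},z_{S(\underline{k})})$, and its grlex-valuation must be shown to exceed $\underline{0}$. The naive bound $jS(\underline{k})-\underline{i}_{\underline{k}}=(j-1)S(\underline{k})-(\underline{i}_{\underline{k}_0}-S(\underline{k}_0))$ coming from the monomial factor alone can be non-positive, so one must additionally exploit the nonnegative grlex-valuation of the polynomial $(\partial^j P/\partial y^j)(\underline{x},z_{S(\underline{k})})$ together with the compatibility identity $P_{S(\underline{k})}(\underline{x},v)=P_{\underline{k}}(\underline{x},c_{S(\underline{k})}+\underline{x}^{S^2(\underline{k})-S(\underline{k})}v)$ linking the $v^j$-coefficients of $P_{S(\underline{k})}$ to the $y^j$-coefficients of $P_{\underline{k}}(\underline{x},y+c_{S(\underline{k})})$. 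Once $w({}_{\underline{k}}Q)>_{\mathrm{grlex}}\underline{0}$ is established, the equation $y={}_{\underline{k}}Q(\underline{x},y)$ fits Definition~\ref{defi:equ-hensel-red}, and since $w(t_{S(\underline{k})})\geq_{\mathrm{grlex}} S^2(\underline{k})-S(\underline{k})>_{\mathrm{grlex}}\underline{0}$, Theorem~\ref{theo:hensel} identifies $t_{S(\underline{k})}$ as its unique solution.
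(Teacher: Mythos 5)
Your reduction to the Taylor expansion in $y$, and your treatment of the $y^0$- and $y^1$-coefficients, are correct. But the step you defer as ``the main obstacle'' --- positivity of the valuation of the $y^j$-coefficients of ${}_{\underline{k}}Q$ for $j\geq 2$ --- is where the entire content of the theorem sits, and the two ingredients you propose there do not suffice to close it. The nonnegativity of $w\left(\frac{\partial^jP}{\partial y^j}(\underline{x},z_{S(\underline{k})})\right)$ only gives valuation $\geq_{\mathrm{grlex}} jS(\underline{k})$ for the $y^j$-coefficient of $P_{\underline{k}}(\underline{x},y+c_{S(\underline{k})})$, and you would need $jS(\underline{k})>_{\mathrm{grlex}}\underline{i}_{\underline{k}}=\left(\underline{i}_{\underline{k}_0}-S(\underline{k}_0)\right)+S(\underline{k})$, i.e.\ $(j-1)S(\underline{k})>_{\mathrm{grlex}} w\left(\frac{\partial P}{\partial y}(\underline{x},y_0)\right)$. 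This genuinely fails for $\underline{k}$ close to $\underline{k}_0$ as soon as $P$ has, besides the root that separates last from $y_0$, further roots close to $y_0$, so that $w(\partial P/\partial y(\underline{x},y_0))$ is much larger than $S(\underline{k}_0)$ (e.g.\ three roots pairwise agreeing to orders $100$ and $200$ give $w(\partial P/\partial y(\underline{x},y_0))=300$ while $S(\underline{k}_0)=200$). The compatibility identity with $P_{S(\underline{k})}$, used coefficientwise in $y$, only yields $w\left(\frac{\partial^jP}{\partial y^j}(\underline{x},z_{S(\underline{k})})\right)\geq_{\mathrm{grlex}}\underline{i}_{S(\underline{k})}-jS^2(\underline{k})$, which is an estimate in the wrong direction. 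What is actually needed is the sharper structural fact that the lowest $\underline{x}$-coefficient $\pi^P_{\underline{k},\underline{i}_{\underline{k}}}(y)$ of $P_{\underline{k}}$ is \emph{linear} in $y$, equal to $\omega_0(y-c_{S(\underline{k})})$; granting this, every $y^j$ with $j\geq 2$ occurs in $P_{\underline{k}}(\underline{x},y+c_{S(\underline{k})})$ only in $\underline{x}$-valuation strictly above $\underline{i}_{\underline{k}}$, and division by $-\omega_0\underline{x}^{\underline{i}_{\underline{k}}}$ automatically gives $w({}_{\underline{k}}Q)>_{\mathrm{grlex}}\underline{0}$.

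The paper proves precisely this linearity by induction on $\underline{k}$, and this is the idea missing from your proposal. Assuming $P_{\underline{k}}(\underline{x},y)=\omega_0(y-c_{S(\underline{k})})\underline{x}^{\underline{i}_{\underline{k}}}+\underline{x}^{\underline{i}_{\underline{k}}}\,{}_{\underline{k}}\tilde Q(\underline{x},y)$ with $w({}_{\underline{k}}\tilde Q)>_{\mathrm{grlex}}\underline{0}$, the substitution $y\mapsto c_{S(\underline{k})}+\underline{x}^{S^2(\underline{k})-S(\underline{k})}y$ defining $P_{S(\underline{k})}$ multiplies each $y^l$-contribution by $\underline{x}^{l(S^2(\underline{k})-S(\underline{k}))}$; hence the coefficient of $\underline{x}^{\underline{i}_{S(\underline{k})}}=\underline{x}^{\underline{i}_{\underline{k}}+S^2(\underline{k})-S(\underline{k})}$ in $P_{S(\underline{k})}$ can only receive contributions from $l\leq 1$ and equals $\omega_0y+\pi^P_{\underline{k},\underline{i}_{S(\underline{k})}}(c_{S(\underline{k})})=\omega_0(y-c_{S^2(\underline{k})})$, which propagates the hypothesis. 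You should either carry out this induction (grading by powers of $\underline{x}$ rather than powers of $y$) or give an independent proof that $\deg_y\pi^P_{\underline{k},\underline{i}_{\underline{k}}}\leq 1$ for all $\underline{k}\geq_{\mathrm{grlex}}\underline{k}_0$; as written, your argument has a gap at its central point.
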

\begin{proof}
We show by induction on $\underline{k}\in(\mathbb{N}^r,\leq_{\mathrm{grlex}})$, $\underline{k}>_{\mathrm{grlex}}\underline{k}_0$,  that $_{\underline{k}}R(x,y)=-y+ \, _{\underline{k}}Q(x,y)$ with $_{\underline{k}}Q(\underline{x},y)\in  K\left[x_1,x_1^{-1},\ldots,x_r,x_r^{-1}\right]\left[y\right],$
such that $w\left( _{\underline{k}}Q\left(\underline{x},y\right)\right) >_{\textrm{grlex}}\underline{0}$.  Let us apply  Formula  (\ref{equ:p_{k+1}}) with parameter $\underline{k}=\underline{k}_0$. Since $\underline{i}_{S(\underline{k}_0)}=\underline{i}_{\underline{k}_0}+S^2(\underline{k}_0)-S(\underline{k}_0)$, we have that  $\pi^P_{\underline{k}_0,\underline{i}}(c_{S(\underline{k}_0)})=0$ for $\underline{i}_{\underline{k}_0}\leq_{\mathrm{grlex}} \underline{i}<_{\mathrm{grlex}}i_{\underline{k}_0}+S^2(\underline{k}_0)-S(\underline{k}_0)$,  and accordingly:
$$P_{S(\underline{k}_0)}(x,y)=\left[\omega_0\,y+\pi^P_{\underline{k}_0,\underline{i}_{\underline{k}_0}+S^2(\underline{k}_0)-S(\underline{k}_0)}(c_{S(\underline{k}_0)})\right]x^{\underline{i}_{\underline{k}_0}+S^2(\underline{k}_0)-S(\underline{k}_0)}+\, _{S(\underline{k}_0)}T(\underline{x},y)$$
where $_{S(\underline{k}_0)}T(\underline{x},y)\in K[\underline{x},y]$ with $w\left( _{S(\underline{k}_0)}T(\underline{x},y)\right)>_{\mathrm{grlex}}\underline{i}_{\underline{k}_0}+S^2(\underline{k}_0)-S(\underline{k}_0)$.
Since $\underline{i}_{S^2(\underline{k}_0)}=\underline{i}_{\underline{k}_0}+S^3(\underline{k}_0)-S(\underline{k}_0)>_{\mathrm{grlex}}i_{\underline{k}_0}+S^2(\underline{k}_0)-S(\underline{k}_0)$, we obtain that: $$\pi^P_{S(\underline{k}_0),i_{\underline{k}_0}+S^2(\underline{k}_0)-S(\underline{k}_0)}(y)=\omega_0\,y+\pi^P_{\underline{k}_0,i_{\underline{k}_0}+S^2(\underline{k}_0)-S(\underline{k}_0)}(c_{S(\underline{k}_0)})$$ vanishes at $c_{S^2(\underline{k}_0)}$, which implies that $$c_{S^2(\underline{k}_0)}= \displaystyle\frac{-\pi^P_{\underline{k}_0,\underline{i}_{\underline{k}_0}+S^2(\underline{k}_0)-S(\underline{k}_0)}(c_{S(\underline{k}_0)})}{\omega_0}.$$ Computing $_{S(\underline{k}_0)}R(\underline{x},y)$, it follows that:
\begin{center}
$_{S(\underline{k}_0)}R(\underline{x},y)=-y+\, _{S(\underline{k}_0)}Q(\underline{x},y) $,\end{center} \textrm{ with }$_{S(\underline{k}_0)}Q(\underline{x},y)=\displaystyle\frac{_{S(\underline{k}_0)}T(\underline{x},y +c_{S^2(\underline{k}_0)})}{-\omega_0\underline{x}^{i_{\underline{k}_0}+S^2(\underline{k}_0)-S(\underline{k}_0)}}$.
So $_{S(\underline{k}_0)}Q(\underline{x},y)\in K\left[x_1,x_1^{-1},\ldots,x_r,x_r^{-1}\right]\left[y\right]$ with $w\left( _{S(\underline{k}_0)}Q(\underline{x},y)\right)>_{\mathrm{grlex}} \underline{0}$. 

Now suppose that the property holds true at a rank $\underline{k}\geq_{\mathrm{grlex}}S(\underline{k}_0)$, which means that $_{\underline{k}}R(\underline{x},y):=\displaystyle\frac{P_{\underline{k}}(\underline{x},y+c_{S(\underline{k})})}{-\omega_0\underline{x}^{\underline{i}_{\underline{k}}}}=-y+\, _{\underline{k}}Q(\underline{x},y)$. Therefore, for $_{\underline{k}}\tilde{Q}(\underline{x},y)=-\omega_0\, _{\underline{k}}Q(\underline{x},y-c_{S(\underline{k})})\in K\left[x_1,x_1^{-1},\ldots,x_r,x_r^{-1}\right]\left[y\right]$ which is such that $w\left( _{\underline{k}}\tilde{Q}\right)  >_{\textrm{grlex}}\underline{0}$, we can write:
$$\begin{array}{lcl}
 P_{\underline{k}}(\underline{x},y)&=&\omega_0(y-c_{S(\underline{k})})\underline{x}^{\underline{i}_{\underline{k}}}+ \underline{x}^{\underline{i}_{\underline{k}}} \cdot\, _{\underline{k}}\tilde{Q}(\underline{x},y)\\
 &=&\pi^P_{\underline{k},\underline{i}_{\underline{k}}}(y)\underline{x}^{\underline{i}_{\underline{k}}}+\pi^P_{\underline{k},S(\underline{i}_{\underline{k}})}(y)x^{S(\underline{i}_{\underline{k}})}+ \cdots.
\end{array}$$
Since $P_{S(\underline{k})}(\underline{x},y)= P_{\underline{k}}\left(\underline{x},c_{S(\underline{k})}+\underline{x}^{S^2(\underline{k})-S(\underline{k})}y\right)$ and $\underline{i}_{S(\underline{k})}=\underline{i}_{\underline{k}}+S^2(\underline{k})-S(\underline{k})$, we have that:
$$ P_{S(\underline{k})}(\underline{x},y)=\left[\omega_0\,y+\pi^P_{\underline{k},\underline{i}_{\underline{k}}+S^2(\underline{k})-S(\underline{k})}(c_{S(\underline{k})})\right]\underline{x}^{\underline{i}_{\underline{k}}+S^2(\underline{k})-S(\underline{k})}+\pi^P_{S(\underline{k}),S(\underline{i}_{S(\underline{k})})}(y)x^{S(\underline{i}_{S(\underline{k})})}+\cdots.$$
But $i_{S^2(\underline{k})}=\underline{i}_{S(\underline{k})}+S^3(\underline{k})-S^2(\underline{k}) >_{\mathrm{grlex}} \underline{i}_{S(\underline{k})}=\underline{i}_{\underline{k}}+S^2(\underline{k})-S(\underline{k})$. So we must have $\pi^P_{S(\underline{k}),\underline{i}_{S(\underline{k})}}(c_{S^2(\underline{k})})=0$, i.e. $c_{S^2(\underline{k})}=\displaystyle\frac{-\pi^P_{\underline{k},\underline{i}_{\underline{k}}+S^2(\underline{k})-S(\underline{k})}(c_{S(\underline{k})})}{\omega_0}$. It follows that:
$$P_{S(\underline{k})}(\underline{x},y)=\omega_0\left(y-c_{S^2(\underline{k})}\right)\underline{x}^{\underline{i}_{\underline{k}}+S^2(\underline{k})-S(\underline{k})}+\pi^P_{S(\underline{k}),S(\underline{i}_{S(\underline{k})})}(y)x^{S(\underline{i}_{S(\underline{k})})}+\cdots,$$
Hence:
$$\begin{array}{lcl}
_{S(\underline{k})}R(\underline{x},y)&=&-y-
\displaystyle\frac{\pi^P_{S(\underline{k}),S(\underline{i}_{S(\underline{k})})}(y+c_{S^2(\underline{k})})}{\omega_0}x^{S(\underline{i}_{S(\underline{k})})-\underline{i}_{S(\underline{k})}}+ \cdots\\
&=&-y+\, _{S(\underline{k})}Q(\underline{x},y),\ \ \ \ _{S(\underline{k})}Q\in K\left[x_1,x_1^{-1},\ldots,x_r,x_r^{-1}\right]\left[y\right],
\end{array} $$
with $w\left( _{\underline{k}}Q\left(\underline{x},y\right)\right) >_{\textrm{grlex}}\underline{0}$ as desired.

To conclude the proof, it suffices to note that the equation $ _{\underline{k}}R(\underline{x},y)=0$ is strongly reduced Henselian if and only if  $ _{\underline{k}}Q\left(\underline{x},0\right)\nequiv 0$, which is equivalent to $z_{S(\underline{k})}$ not being a root of $P$.
\end{proof}

 We will need the following lemma:
 \begin{lemma}\label{lemme:coincidence}
 Let $y_0\in K[[\underline{x}]]$ be a simple root of a nonzero polynomial $P(\underline{x},y)$ of degrees $\deg_x(P)\leq d_x$ and $\deg_y(P)\leq d_y$. For any other root $y_1\neq y_0$ of $P$, one has that:
 $$\mathrm{ord}_{\underline{x}}\, (y_0-y_1)\leq 2\,d_xd_y.$$
 \end{lemma}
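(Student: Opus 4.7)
The plan is to combine Lemma \ref{lemme:ordreQ} applied to $\partial P/\partial y$ with the fact that $y_0 - y_1$ divides $(\partial P/\partial y)(\underline{x}, y_0)$.

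First, since $y_0$ is a \emph{simple} root of $P$, we have $(\partial P/\partial y)(\underline{x}, y_0) \neq 0$. The polynomial $\partial P/\partial y$ lies in $K[\underline{x}, y]$ with $\deg_{\underline{x}}(\partial P/\partial y) \le d_x$ and $\deg_y(\partial P/\partial y) \le d_y - 1 \le d_y$. Lemma \ref{lemme:ordreQ} therefore yields
$$\mathrm{ord}_{\underline{x}}\,\frac{\partial P}{\partial y}(\underline{x}, y_0) \leq 2\, d_x d_y.$$

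Next, divide $P$ by $(y - y_0)$ in $K[[\underline{x}]][y]$: there exists $Q_1(\underline{x}, y) \in K[[\underline{x}]][y]$ such that $P(\underline{x}, y) = (y - y_0)\, Q_1(\underline{x}, y)$. Differentiating and specializing at $y = y_0$ gives
$$\frac{\partial P}{\partial y}(\underline{x}, y_0) = Q_1(\underline{x}, y_0).$$
If $y_1 \neq y_0$ is another root of $P$ (lying in whichever extension of $K[[\underline{x}]]$ is relevant), then $Q_1(\underline{x}, y_1) = 0$. Viewing $Q_1$ as a polynomial in its second argument, the standard identity
$$Q_1(\underline{x}, y_0) - Q_1(\underline{x}, y_1) = (y_0 - y_1)\, \widetilde{Q}(\underline{x})$$
holds with $\widetilde{Q}(\underline{x}) := \sum_{j \geq 1} \frac{1}{j!}\, \frac{\partial^{j} Q_1}{\partial y^{j}}(\underline{x}, y_1)\, (y_0 - y_1)^{j-1}$, which is integral over $K[[\underline{x}]][y_0, y_1]$ and hence has nonnegative order. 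Combining,
$$\mathrm{ord}_{\underline{x}}(y_0 - y_1) \leq \mathrm{ord}_{\underline{x}}\, Q_1(\underline{x}, y_0) = \mathrm{ord}_{\underline{x}}\,\frac{\partial P}{\partial y}(\underline{x}, y_0) \leq 2\, d_x d_y,$$
which is the desired bound.

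The only delicate point is the final step: one must check that $\widetilde{Q}$ does not contribute negatively to the order, which requires that both $y_0$ and $y_1$ be elements whose iterated coefficients lie in some fixed $K[[\underline{u}]]$-type ring (after a monomialization as in Lemma \ref{lemme:monomialisation} if $y_1 \in \mathcal{K}_r$) so that $\mathrm{ord}_{\underline{x}}$ is a valuation on the containing ring. In the intended application this is immediate since $y_0 \in K[[\underline{x}]]$ and $y_1$ is a Puiseux series with support in a translated rational cone, so both become power series after a common monomial substitution, and the graded lexicographic valuation extends to their common ring.
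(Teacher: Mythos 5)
Your proof is correct and follows essentially the same strategy as the paper: both arguments reduce the bound to $\mathrm{ord}_{\underline{x}}\,\frac{\partial P}{\partial y}(\underline{x},y_0)\leq 2d_xd_y$ via Lemma \ref{lemme:ordreQ} and then show that $w(y_0-y_1)\leq_{\mathrm{grlex}} w\left(\frac{\partial P}{\partial y}(\underline{x},y_0)\right)$. The paper realizes this last divisibility by Taylor-expanding $P$ at $y_0$ and factoring out $\delta_{1,0}=y_1-y_0$ from $0=P(\underline{x},y_0+\delta_{1,0})$, whereas you factor $P=(y-y_0)Q_1$ and Taylor-expand $Q_1$ at $y_1$ — an equivalent computation.
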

\begin{proof}
Note that the hypothesis imply that $d_y\geq 2$. Let us write $y_1-y_0=\delta_{1,0}$ and $\underline{k}:=w(y_1-y_0)=w(\delta_{1,0})\in \mathbb{N}^r$. By Taylor's Formula, we have:
$$\begin{array}{lcl}
P(\underline{x},y_0+\delta_{1,0})&=&0\\
&=& P(\underline{x},y_0)+\displaystyle\frac{\partial P}{\partial y}(\underline{x},y_0) \delta_{1,0}+\cdots+\displaystyle\frac{1}{d_y!}\displaystyle\frac{\partial^{d_y} P}{\partial y^{d_y}}(\underline{x},y_0){\delta_{1,0}}^{d_y}\\
&=&\delta_{1,0}\left(\displaystyle\frac{\partial P}{\partial y}(\underline{x},y_0)+\cdots+\displaystyle\frac{1}{d_y!}\displaystyle\frac{\partial^{d_y} P}{\partial y^{d_y}}(\underline{x},y_0){\delta_{1,0}}^{d_y-1}\right).
\end{array}$$
Since $\delta_{1,0}\neq 0$ and $\displaystyle\frac{\partial P}{\partial y}(\underline{x},y_0)\neq 0$, one has that:
$$\displaystyle\frac{\partial P}{\partial y}(\underline{x},y_0)=-\delta_{1,0}\left(\displaystyle\frac{1}{2}\displaystyle\frac{\partial^2 P}{\partial y^2}(\underline{x},y_0)+\cdots+\displaystyle\frac{1}{d_y!}\displaystyle\frac{\partial^{d_y} P}{\partial y^{d_y}}(\underline{x},y_0){\delta_{1,0}}^{d_y-2}\right)$$
The valuation of the right hand side being at least $\underline{k}$, we obtain that:
$$ w\left(\displaystyle\frac{\partial P}{\partial y}(\underline{x},y_0)\right)\geq_{\mathrm{grlex}} \underline{k}.$$
But, by Lemma \ref{lemme:ordreQ}, we must have $\mathrm{ord}_x\left(\displaystyle\frac{\partial P}{\partial y}(\underline{x},y_0)\right)\leq 2d_xd_y$. So $|\underline{k}|\leq 2d_xd_y$.
\end{proof}

For the courageous reader, in the case where $y_0$ is a series which is not a polynomial, we deduce from Theorem \ref{theo:FS} and from the generalized  Flajolet-Soria's Formula \ref{theo:formule-FS} a closed-form expression for the coefficients of $y_0$ in terms of the coefficients $a_{\underline{i},j}$ of $P$ and of the coefficients of an initial part  $z_{\underline{k}}$ of $y_0$ sufficiently large, in particular for any $\underline{k}\in\mathbb{N}^r$ such that $|\underline{k}|\geq 2d_xd_y+1$. Recall that $i_{\underline{k}}=w\left( P_{\underline{k}}(\underline{x},y)\right)$. Note that for such $\underline{k}$, since $y_0$ is not a polynomial, by Lemma \ref{lemme:coincidence}, $z_{S(\underline{k})}$ cannot be a root of $P$.

\begin{coro}\label{coro:FS}
Let $\underline{k}\in\mathbb{N}^r$ be such that $|\underline{k}|\geq 2d_xd_y+1$. For any $\underline{p}>_{\mathrm{grlex}} S(\underline{k})$, consider $\underline{n}:=\underline{p}-S(\underline{k})$. Then:
$$c_{\underline{p}}=c_{S(\underline{k})+\underline{n}}=\displaystyle\sum_{q=1}^{\mu_{\underline{n}}} \displaystyle\frac{1}{q}\left(\displaystyle\frac{-1}{\omega_0}\right)^q\displaystyle\sum_{|\underline{S}|=q,\ \|\underline{S}\|\geq q-1}\underline{A}^{\underline{S}}\left(\displaystyle\sum_{|\underline{T}_{\underline{S}}|=\|\underline{S}\|-q+1 \atop  G(\underline{T}_{\underline{S}})=\underline{n}+q\underline{i}_{\underline{k}}-(q-1)S(\underline{k})-G(\underline{S})}e_{\underline{T}_{\underline{S}}}\underline{C}^{\underline{T}_{\underline{S}}}\right),$$
where $\mu_{\underline{n}}$ is as in Theorem \ref{theo:formule-FS} for the equation $ y=\, _{\underline{k}}Q(\underline{x},y)$ of Theorem \ref{theo:FS}, $S=(s_{\underline{i},j})$, $\underline{A}^{\underline{S}}=\displaystyle\prod_{|\underline{i}|=0,\ldots,d_x\, ,\ j=0,\ldots,d_y}a_{\underline{i},j}^{s_{\underline{i},j}}$, $\underline{T}_{\underline{S}}=(t_{\underline{S},\underline{i}})$,  $\underline{C}^{\underline{T}_{\underline{S}}}=\displaystyle\prod_{\underline{i}=(0,\ldots,0,1)}^{S(\underline{k})}c_{\underline{i}}^{t_{\underline{S},\underline{i}}}$, 
 and $e_{\underline{T}_{\underline{S}}}\in\mathbb{N}$ is of the form:\\

\noindent $\ \ e_{\underline{T}_{\underline{S}}}=$\\

$\displaystyle\sum_{\left(n^{\underline{l},m}_{\underline{i},j,\underline{L}}\right)} \displaystyle\frac{q!}{\displaystyle\prod_{{\underline{l} =S(\underline{i}_{\underline{k}})-\underline{i}_{\underline{k}},\ldots,\atop d_yS(\underline{k})+(d_x,0,\ldots,0)-\underline{i}_{\underline{k}}}  \atop m=0,\ldots,m_{\underline{l}}}\displaystyle\prod_{|\underline{i}|=0,\ldots,d_x \atop j=m,\ldots,d_y}\displaystyle\prod_{|\underline{L}|=j-m \atop G(\underline{L})=\underline{l}+\underline{i}_{\underline{k}}-mS(\underline{k})-\underline{i}}n^{\underline{l},m}_{\underline{i},j,\underline{L}}!} \displaystyle\prod_{{\underline{l}=S(\underline{i}_{\underline{k}})-\underline{i}_{\underline{k}},\ldots,\atop d_y S(\underline{k})+(d_x,0,\ldots,0)-\underline{i}_{\underline{k}}} \atop m=0,\ldots,m_{\underline{l}}}\displaystyle\prod_{|\underline{i}|=0,\ldots,d_x \atop j=m,\ldots,d_y}\displaystyle\prod_{|\underline{L}|=j-m \atop G(\underline{L})=\underline{l}+\underline{i}_{\underline{k}}-mS(\underline{k})-\underline{i}}\left(\displaystyle\frac{j!}{m!\,L!}\right)^{n^{\underline{l},m}_{\underline{i},j,\underline{L}}},$\\
where we denote $m_{\underline{l}}:=\min\left\{d_y,\ \max\left\{m\in\mathbb{N}\, / \, mS(\underline{k})\leq_{\mathrm{grlex}}\underline{l }+\underline{i}_{\underline{k}}\right\}\right\}$,\\ $\underline{L}=\underline{L}_{\underline{i},j}^{\underline{l},m}=\left(l_{\underline{i},j,(0,\ldots,0,1)}^{\underline{l},m},\ldots,l_{\underline{i},j,S(\underline{k})}^{\underline{l},m}\right)$, 
 and where the sum is taken over the set of tuples  $\left(n^{\underline{l},m}_{\underline{i},j,\underline{L}}\right)_{\underline{l}= S(\underline{i}_{\underline{k}})-\underline{i}_{\underline{k}},\ldots,d_yS(\underline{k})+(d_x,0,\ldots,0)-\underline{i}_{\underline{k}},\ m=0,\ldots,m_{\underline{l}} \atop |\underline{i}|=0,\ldots,d_x,\ j=m,\ldots,d_y,\ |\underline{L}|=j-m,\  G(\underline{L})=\underline{l}+\underline{i}_{\underline{k}}-mS(\underline{k})-\underline{i}}$ such that:
\begin{center}
 $\displaystyle\sum_{\underline{l},m}\displaystyle\sum_{\underline{L}} n^{\underline{l},m}_{\underline{i},j,\underline{L}}=s_{\underline{i},j}$,  $\ \ \displaystyle\sum_{\underline{l},m}\displaystyle\sum_{\underline{i},j} \displaystyle\sum_{\underline{L}}n^{\underline{l},m}_{\underline{i},j,\underline{L}}=q\ \ \ $ and $\ \ \ \displaystyle\sum_{\underline{l},m}\displaystyle\sum_{\underline{i},j} \displaystyle\sum_{\underline{L}}n^{\underline{l},m}_{\underline{i},j,\underline{L}}\underline{L}= \underline{T}_{\underline{S}}$.
\end{center}
\end{coro}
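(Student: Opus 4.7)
\begin{demo}[Proof proposal]
The plan is to reduce the statement to a direct substitution of the expansion of ${}_{\underline{k}}Q(\underline{x},y)$ into the generalized Flajolet--Soria formula of Theorem \ref{theo:formule-FS}.

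First I would check the applicability of Theorem \ref{theo:FS}. Since $|\underline{k}|\geq 2d_xd_y+1$ and by definition of the graded lexicographic order, one has $\underline{k}>_{\mathrm{grlex}}\underline{k}_0$ thanks to the bound $|\underline{k}_0|\leq 2d_xd_y$ of Lemma \ref{lemme:partie-princ}. Since $y_0$ is assumed not to be a polynomial, by Lemma \ref{lemme:coincidence} the polynomial $z_{S(\underline{k})}$ cannot be a root of $P$ (otherwise $y_0-z_{S(\underline{k})}$ would have order $>2d_xd_y$, a contradiction). Therefore Theorem \ref{theo:FS} applies: the series $t_{S(\underline{k})}=(y_0-z_{S(\underline{k})})/\underline{x}^{S(\underline{k})}$ is the unique solution of the strongly reduced Henselian equation $y={}_{\underline{k}}Q(\underline{x},y)$, where
$${}_{\underline{k}}Q(\underline{x},y)=y+\frac{P\bigl(\underline{x},z_{S(\underline{k})}+\underline{x}^{S(\underline{k})}y\bigr)}{-\omega_0\,\underline{x}^{\underline{i}_{\underline{k}}}}.$$
Writing $\underline{p}=S(\underline{k})+\underline{n}$, the coefficient $c_{\underline{p}}$ of $y_0$ is exactly the coefficient of $\underline{x}^{\underline{n}}$ in $t_{S(\underline{k})}$.

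Next I would expand ${}_{\underline{k}}Q$ explicitly. Applying the binomial formula to $(z_{S(\underline{k})}+\underline{x}^{S(\underline{k})}y)^j$ and then the multinomial formula to $z_{S(\underline{k})}^{j-m}=\bigl(\sum_{\underline{n}=(0,\ldots,0,1)}^{S(\underline{k})}c_{\underline{n}}\underline{x}^{\underline{n}}\bigr)^{j-m}$ gives, after dividing by $-\omega_0\underline{x}^{\underline{i}_{\underline{k}}}$ and adding $y$,
$${}_{\underline{k}}Q(\underline{x},y)=\sum_{\underline{l},m}\alpha_{\underline{l},m}\,\underline{x}^{\underline{l}}y^m,\qquad \alpha_{\underline{l},m}=\frac{-1}{\omega_0}\sum_{\underline{i},j,\underline{L}}\frac{j!}{m!\,\underline{L}!}\,a_{\underline{i},j}\,\underline{C}^{\underline{L}}+\delta_{(\underline{l},m),(\underline{0},1)},$$
where the inner sum is taken over $(\underline{i},j,\underline{L})$ with $|\underline{i}|\leq d_x$, $m\leq j\leq d_y$, $|\underline{L}|=j-m$ and $G(\underline{L})=\underline{l}+\underline{i}_{\underline{k}}-mS(\underline{k})-\underline{i}$. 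The indices $(\underline{l},m)$ run over $m=0,\ldots,m_{\underline{l}}$ and over $\underline{l}$ in the range specified in the statement; the bound $m_{\underline{l}}=\min\{d_y,\max\{m\,/\,mS(\underline{k})\leq_{\mathrm{grlex}}\underline{l}+\underline{i}_{\underline{k}}\}\}$ comes precisely from the constraint $|\underline{L}|=j-m\geq 0$ together with $G(\underline{L})\geq_{\mathrm{grlex}}\underline{0}$. The correction $\delta_{(\underline{l},m),(\underline{0},1)}$ cancels the leading term and makes the equation strongly reduced, as observed after Notation \ref{nota:omega_0}.

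Then I would insert this expansion into Theorem \ref{theo:formule-FS} applied to $y={}_{\underline{k}}Q(\underline{x},y)$: for $\underline{n}>_{\mathrm{grlex}}\underline{0}$,
$$c_{\underline{p}}=\sum_{q=1}^{\mu_{\underline{n}}}\frac{1}{q}\sum_{\underline{M}}\frac{q!}{\underline{M}!}\,\underline{\alpha}^{\underline{M}},$$
the inner sum ranging over $\underline{M}=(m_{\underline{l},m})$ with $|\underline{M}|=q$, $\|\underline{M}\|=q-1$, $G(\underline{M})=\underline{n}$. Each factor $\alpha_{\underline{l},m}^{m_{\underline{l},m}}$ is then re-expanded by the multinomial theorem: one chooses nonnegative integers $n_{\underline{i},j,\underline{L}}^{\underline{l},m}$ summing to $m_{\underline{l},m}$ and picks up the multinomial coefficient $m_{\underline{l},m}!/\prod n_{\underline{i},j,\underline{L}}^{\underline{l},m}!$ times the product of the corresponding $(j!/(m!\,\underline{L}!))\,a_{\underline{i},j}\underline{C}^{\underline{L}}$'s. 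Regrouping according to the total exponents $s_{\underline{i},j}:=\sum n_{\underline{i},j,\underline{L}}^{\underline{l},m}$ of the $a_{\underline{i},j}$'s and $t_{\underline{S},\underline{n}}:=\sum n_{\underline{i},j,\underline{L}}^{\underline{l},m}\,l_{\underline{n}}$ of the $c_{\underline{n}}$'s yields exactly the announced expression, with the combinatorial factor $e_{\underline{T}_{\underline{S}}}$ equal to the sum of the products of $q!/\underline{M}!$, of the multinomial coefficients from each $\alpha_{\underline{l},m}^{m_{\underline{l},m}}$, and of the factors $(j!/(m!\,\underline{L}!))^{n_{\underline{i},j,\underline{L}}^{\underline{l},m}}$, all these products being rearranged into the single combinatorial coefficient $q!/\prod_{\underline{l},m,\underline{i},j,\underline{L}}n_{\underline{i},j,\underline{L}}^{\underline{l},m}!$ via the identity $q!/\underline{M}!\cdot\prod m_{\underline{l},m}!/\prod n_{\underline{i},j,\underline{L}}^{\underline{l},m}!=q!/\prod n_{\underline{i},j,\underline{L}}^{\underline{l},m}!$. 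The three constraints on $(n_{\underline{i},j,\underline{L}}^{\underline{l},m})$ listed in the statement are respectively the definition of $s_{\underline{i},j}$, the relation $|\underline{M}|=q$, and the definition of $\underline{T}_{\underline{S}}$, while $\|\underline{M}\|=q-1$ becomes the condition $\|\underline{S}\|\geq q-1$ appearing in the outer sum and is encoded in the grlex-support constraint $G(\underline{T}_{\underline{S}})=\underline{n}+q\underline{i}_{\underline{k}}-(q-1)S(\underline{k})-G(\underline{S})$ after translating $G(\underline{M})=\underline{n}$ via $\underline{l}=\underline{i}+mS(\underline{k})+G(\underline{L})-\underline{i}_{\underline{k}}$. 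The main obstacle is this last bookkeeping: keeping track of which multinomial coefficients come from which level of expansion, and verifying the three compatibility equations between $(s_{\underline{i},j})$, $\underline{T}_{\underline{S}}$ and $(n_{\underline{i},j,\underline{L}}^{\underline{l},m})$ so that no spurious term survives.
\end{demo}
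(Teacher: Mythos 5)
Your proposal is correct and follows essentially the same route as the paper: reduce to the strongly reduced Henselian equation of Theorem \ref{theo:FS} (justified via Lemmas \ref{lemme:partie-princ} and \ref{lemme:coincidence}), expand the coefficients $b_{\underline{l},m}$ of $_{\underline{k}}Q$ by the binomial and multinomial formulas, apply Theorem \ref{theo:formule-FS}, and re-expand the powers $b_{\underline{l},m}^{q_{\underline{l},m}}$ multinomially before regrouping by $\underline{S}$ and $\underline{T}_{\underline{S}}$. The "bookkeeping" you flag as the main obstacle is exactly what the paper's proof spends most of its length on — verifying that the index set $\mathcal{E}_{\underline{T}_{\underline{S}}}$ decomposes as a disjoint union over the admissible $\underline{Q}$'s — and the compatibility equations you list are the right ones to check.
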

\begin{remark}\label{rem:coeff-e_T}
Note that the coefficients $e_{\underline{T}_{\underline{S}}}$ are indeed natural numbers, since they are sums of products of multinomial coefficients because  $\displaystyle\sum_{\underline{l},m}\displaystyle\sum_{\underline{i},j}\displaystyle\sum_{\underline{L}} n^{\underline{l},m}_{\underline{i},j,\underline{L}}=q$ and $m+|\underline{L}|=j$. In fact,  $\displaystyle\frac{1}{q}e_{\underline{T}_{\underline{S}}}\in\mathbb{N}$ by Remark \ref{rem:entier} as we will see along the proof.
\end{remark}
\begin{proof}
We get started by computing the coefficients of $\omega_0\underline{x}^{\underline{i}_{\underline{k}}}\, _{\underline{k}}R$,  in order to get those of $\, _{\underline{k}}Q$:
$$\begin{array}{lcl}
-\omega_0\underline{x}^{\underline{i}_{\underline{k}}}\, _{\underline{k}}R&=&P_{\underline{k}}(\underline{x},\,y+c_{S(\underline{k})})\\
&=&P(\underline{x},z_{S(\underline{k})}+x^{S(\underline{k})}y)\\
&=& \displaystyle\sum_{|\underline{i}|=0,\ldots,d_x\, ,\ j=0,\ldots,d_y}a_{\underline{i},j}\underline{x}^{\underline{i}}\left(z_{S(\underline{k})}+\underline{x }^{S(\underline{k})}y\right)^{j}\\
&=& \displaystyle\sum_{|\underline{i}|=0,\ldots,d_x\, ,\ j=0,\ldots,d_y}a_{\underline{i},j}\underline{x}^{\underline{i}}\displaystyle\sum_{m=0}^{j}\displaystyle\frac{j!}{m!\,(j-m)!}z_{S(\underline{k})}^{j-m}\underline{x}^{mS(\underline{k})}y^m.
\end{array}$$
For $L=(l_{(0,\ldots,0,1) },\cdots,l_{S(\underline{k})})$, we denote 
$\underline{C}^{\underline{L}}:=c_{(0,\ldots,0,1)}^{l_{(0,\ldots,0,1)}}\cdots c_{S(\underline{k})}^{l_{S(\underline{k})}}$. One  has that:
$$z_{S(\underline{k})}^{j-m}=\displaystyle\sum_{|\underline{L}|=j-m}\displaystyle\frac{(j-m)!}{\underline{L}!}\underline{C}^{\underline{L}}\underline{x}^{G(\underline{L})}.$$
So:
$$ -\omega_0\underline{x}^{\underline{i}_{\underline{k}}}\, _{\underline{k}}R=\displaystyle\sum_{m=0}^{d_y} \displaystyle\sum_{|\underline{i}|=0,\ldots,d_x \atop j=m,\ldots,d_y}a_{\underline{i},j}\displaystyle\sum_{|\underline{L}|=j-m}\displaystyle\frac{j!}{m!\,\underline{L}!}\underline{C}^{\underline{L}}\underline{x}^{G(\underline{L}) +mS(\underline{k})+\underline{i}}\,y^m.$$
We set $\underline{\hat{l}}=G(\underline{L})+mS(\underline{k})+\underline{i}$, which ranges between $mS(\underline{k})$ and $(d_y-m)S(\underline{k})+mS(\underline{k})+(d_x,0,\ldots,0)=d_yS(\underline{k})+(d_x,0,\ldots,0)$. Thus: 
$$-\omega_0\underline{x}^{\underline{i}_{\underline{k}}}\, _{\underline{k}}R=\displaystyle\sum_{m=0,\ldots,d_y \atop \hat{l}=mS(\underline{k}),\ldots,d_yS(\underline{k})+(d_x,0,\ldots,0)} \displaystyle\sum_{|\underline{i}|=0,\ldots,d_x \atop j=m,\ldots,d_y}a_{\underline{i},j}\displaystyle\sum_{|\underline{L}|=j-m \atop G(\underline{L})=\underline{\hat{l}}-mS(\underline{k})-\underline{i}}\displaystyle\frac{j!}{m!\,\underline{L}!}\underline{C}^{\underline{L}}\underline{x}^{\underline{\hat{l}}}y^m.$$
Since $_{\underline{k}}R(\underline{x},y)=-y+\, _{\underline{k}}Q(\underline{x},y)$ with $w(\, _{\underline{k}}Q(\underline{x},y))>_{\mathrm{grlex}} \underline{0}$, the coefficients of $_{\underline{k}}Q$ are obtained for $\underline{\hat{l}}=S(\underline{i}_{\underline{k}}),\ldots,d_yS(\underline{k})+(d_x,0,\ldots,0)$ \footnote{Note that our assumptions ensure that $ _{\underline{k}}Q(\underline{x},y)\neq 0$, so $S(\underline{i}_{\underline{k}}) \leq_{\mathrm{grlex}} d_yS(\underline{k})+(d_x,0,\ldots,0)$.}. We set $\underline{l}:=\underline{\hat{l}}-\underline{i}_{\underline{k}}$
 and  $$m_{\underline{l}}:=\min\left\{d_y,\ \max\left\{m\in\mathbb{N}\, / \, mS(\underline{k})\leq_{\mathrm{grlex}}\underline{l }+\underline{i}_{\underline{k}}\right\}\right\}.$$
We obtain: $$_{\underline{k}}Q(\underline{x},y)=\displaystyle\sum_{\underline{l}=S(\underline{i}_{\underline{k}})-\underline{i}_{\underline{k}},\ldots,d_yS(\underline{k})+(d_x,0,\ldots,0)-\underline{i}_{\underline{k}} \atop m=0,\ldots,m_{\underline{l}}}b_{\underline{l},m}\underline{x}^{\underline{l}}y^m,$$
with:
$$b_{\underline{l},m}=\displaystyle\frac{-1}{\omega_0}\displaystyle\sum_{|\underline{i}|=0,\ldots,d_x \atop j=m,\ldots,d_y}a_{\underline{i},j}\displaystyle\sum_{|\underline{L}|=j-m \atop G(\underline{L})=\underline{l}+\underline{i}_{\underline{k}}-mS(\underline{k})-\underline{i}}\displaystyle\frac{j!}{m!\,\underline{L}!}\underline{C}^{\underline{L}}.$$
According to Lemma \ref{lemme:partie-princ}, Theorem \ref{theo:FS} and Lemma \ref{lemme:coincidence}, we are in position to apply  the generalized Flajolet-Soria's Formula of Theorem \ref{theo:formule-FS}  in order to compute the coefficients of the solution $t_{S(\underline{k})}=c_{S^2(\underline{k})}\underline{x}^{S^2(\underline{k})-S(\underline{k})}+c_{S^3(\underline{k})}\underline{x}^{S^3(\underline{k})-S(\underline{k})}+\cdots$.  Thus, denoting $\underline{B}:=(b_{\underline{l},m})$, $\underline{Q}:=(q_{\underline{l},m})$ and $\underline{B}^{\underline{Q}}:=\displaystyle\prod_{\underline{l},m} b_{\underline{l},m}^{q_{\underline{l},m}}$  for $\underline{l}=S(\underline{i}_{\underline{k}})-\underline{i}_{\underline{k}},\ldots,d_yS(\underline{k})+(d_x,0,\ldots,0)-\underline{i}_{\underline{k}}$ and $m=0,\ldots,m_{\underline{l}}$, we obtain: 
$$c_{S(\underline{k})+\underline{n}}=\displaystyle\sum_{q=1}^{\mu_{\underline{n}}}\displaystyle\frac{1}{q}\displaystyle\sum_{|\underline{Q}|=q,\,\|\underline{Q}\|=q-1 ,\, G(\underline{Q})=\underline{n}}\displaystyle\frac{q!}{\underline{Q}!}\underline{B}^{\underline{Q}}.$$
As in Remark \ref{rem:entier}, we have $\displaystyle\frac{1}{q}\cdot\displaystyle\frac{q!}{\underline{Q}!}\in\mathbb{N}$.
Let us compute:
\begin{equation}\label{equ:b_lm}
\begin{array}{lcl}
b_{\underline{l},m}^{q_{\underline{l},m}}&=&\left(\displaystyle\frac{-1}{\omega_0}\right)^{q_{\underline{l},m}}\left(\displaystyle\sum_{|\underline{i}|=0,\ldots,d_x \atop j=m,\ldots,d_y}a_{\underline{i},j}\displaystyle\sum_{|\underline{L}|=j-m \atop G(\underline{L})=\underline{l} +\underline{i}_{\underline{k}}-mS(\underline{k})-\underline{i}}\displaystyle\frac{j!}{m!\,\underline{L}!}\underline{C}^{\underline{L}}\right)^{q_{\underline{l},m}}\\
&=&\left(\displaystyle\frac{-1}{\omega_0}\right)^{q_{\underline{l},m}}\displaystyle\sum_{|\underline{M}_{\underline{l},m}|=q_{\underline{l},m}}\displaystyle\frac{q_{\underline{l},m}!}{\underline{M}_{\underline{l},m}!} \underline{A}^{\underline{M}_{\underline{l},m}} \displaystyle\prod_{|\underline{i}|=0,\ldots,d_x \atop j=m,\ldots,d_y}\left(\displaystyle\sum_{|\underline{L}|=j-m \atop G(\underline{L})=\underline{l}+\underline{i}_{\underline{k}}-mS(\underline{k})- \underline{i}}\displaystyle\frac{j!}{m!\,\underline{L}!}\underline{C}^{\underline{L}}\right)^{m^{\underline{l},m}_{\underline{i},j}}\\
&& \textrm{ where } \underline{M}_{\underline{l},m}=(m^{\underline{l},m}_{\underline{i},j})\textrm{ for } |\underline{i}|=0,\ldots,d_x,\  j=0,\ldots,d_y\textrm{ and }m^{\underline{l},m}_{\underline{i},j}=0\textrm{ for }j<m.
\end{array}
\end{equation}
Note that, in the previous formula, $\left(-\omega_0\right)^{q_{\underline{l},m}}b_{\underline{l},m}^{q_{\underline{l},m}}$ is the evaluation at $\underline{A}$ and $\underline{C}$ of a polynomial with coefficients in $\mathbb{N}$. Since  $\displaystyle\frac{1}{q}\cdot\displaystyle\frac{q!}{\underline{Q}!}\in\mathbb{N}$, the expansion of $\left(-\omega_0\right)^{q}\displaystyle\frac{1}{q}\cdot\displaystyle\frac{q!}{\underline{Q}!}\underline{B}^{\underline{Q}}$ as a polynomial in $\underline{A}$ and $\underline{C}$ will only have natural numbers as coefficients.

Let us expand the expression $\displaystyle\prod_{|\underline{i}|=0,\ldots,d_x \atop j=m,\ldots,d_y}\left(\displaystyle\sum_{|\underline{L}|=j-m \atop G(\underline{L})=\underline{l}+\underline{i}_{\underline{k}}-mS(\underline{k})-\underline{i}}\displaystyle\frac{j!}{m!\,\underline{L}!}\underline{C}^{\underline{L}}\right)^{m^{\underline{l},m}_{\underline{i},j}}$.
For each $(\underline{l},m,\underline{i},j)$, we enumerate the terms $\displaystyle\frac{j!}{m!\,\underline{L}!}\underline{C}^{\underline{L}}$ with $u=1,\ldots,\alpha_{\underline{i},j}^{\underline{l},m}$. Subsequently:
$$\begin{array}{lcl}
\left(\displaystyle\sum_{|\underline{L}|=j-m \atop G(\underline{L})=\underline{l}+\underline{i}_{\underline{k}}-mS(\underline{k})-\underline{i}}\displaystyle\frac{j!}{m!\,\underline{L}!}\underline{C}^{\underline{L}}\right)^{m^{\underline{l},m}_{\underline{i},j}}&=& \left(\displaystyle\sum_{u=1}^{\alpha_{\underline{i},j}^{\underline{l},m}}\displaystyle\frac{j!}{m!\,\underline{L}_{\underline{i},j,u}^{\underline{l},m}!} \underline{C}^{\underline{L}_{\underline{i},j,u}^{\underline{l},m}}\right)^{ m^{\underline{l},m}_{\underline{i},j}}\\
&=& \displaystyle\sum_{|\underline{N}^{\underline{l},m}_{\underline{i},j}|=m^{\underline{l},m}_{\underline{i},j}}\displaystyle\frac{m^{\underline{l},m}_{\underline{i},j}!}{\underline{N}^{\underline{l},m}_{\underline{i},j}!}\left( \displaystyle\prod_{u=1}^{\alpha_{\underline{i},j}^{\underline{l},m}} \left(\displaystyle\frac{j!}{m!\,\underline{L}_{\underline{i},j,u}^{\underline{l},m}!}\right)^{ n^{\underline{l},m}_{\underline{i},j,u}}\right) \underline{C}^{\sum_{u=1}^{\alpha^{\underline{l},m}_{\underline{i},j}} n^{\underline{l},m}_{\underline{i},j,u} \underline{L}_{\underline{i},j,u}^{\underline{l},m}},
\end{array} $$
where $\underline{N}^{\underline{l},m}_{\underline{i},j}= \left(n^{\underline{l},m}_{\underline{i},j,u}\right)_{u=1,\ldots,\alpha_{\underline{i},j}^{\underline{l},m}}$, %
 $\, \underline{N}^{\underline{l},m}_{\underline{i},j}!= \displaystyle\prod_{u=1}^{\alpha_{\underline{i},j}^{\underline{l},m}} n^{\underline{l},m}_{\underline{i},j,u}!$. 
 Denoting $ \underline{U}_{\underline{l},m}:= \displaystyle\sum_{|\underline{i}|=0,\ldots,d_x \atop j=m,\ldots,d_y}\displaystyle\sum_{u=1}^{ \alpha_{\underline{i},j}^{\underline{l},m}}n^{\underline{l},m}_{\underline{i},j,u} \underline{L}_{\underline{i},j,u}^{\underline{l},m}$, one computes:
\begin{equation}\label{equ:Ulm}
\begin{array}{lcl}
|\underline{U}_{\underline{l},m}|&=&\displaystyle\sum_{|\underline{i}|=0,\ldots,d_x \atop j=m,\ldots,d_y}\displaystyle\sum_{u=1}^{\alpha_{\underline{i},j}^{\underline{l},m}} n^{\underline{l},m}_{\underline{i},j,u}|\underline{L}_{\underline{i},j,u}^{\underline{l},m}|\\
&=&\displaystyle\sum_{|\underline{i}|=0,\ldots,d_x \atop j=m,\ldots,d_y}\left(\displaystyle\sum_{u=1}^{\alpha_{\underline{i},j}^{\underline{l},m}} n^{\underline{l},m}_{\underline{i},j,u}\right)(j-m)\\
&=&\displaystyle\sum_{|\underline{i}|=0,\ldots,d_x \atop j=m,\ldots,d_y}m^{\underline{l},m}_{\underline{i},j}(j-m)\\
&=& \|\underline{M}_{\underline{l},m}\|-m\,q_{\underline{l},m}.
\end{array} \end{equation} 
Likewise, one computes:
\begin{equation}\label{equ:Ulm2}\begin{array}{lcl}
G(\underline{U}_{\underline{l},m})&=&\displaystyle\sum_{|\underline{i}|=0,\ldots,d_x \atop j=m,\ldots,d_y}\displaystyle\sum_{u=1}^{\alpha_{\underline{i},j}^{\underline{l},m}} n^{\underline{l},m}_{\underline{i},j,u}G(\underline{L}_{\underline{i},j,u}^{\underline{l},m})\\
&=&\displaystyle\sum_{|\underline{i}|=0,\ldots,d_x \atop j=m,\ldots,d_y}\left(\displaystyle\sum_{u=1}^{\alpha_{\underline{i},j}^{\underline{l},m}} n^{\underline{l},m}_{\underline{i},j,u}\right)(\underline{l}+\underline{i}_{\underline{k}}-mS(\underline{k})-\underline{i})\\
&=&\displaystyle\sum_{|\underline{i}|=0,\ldots,d_x \atop j=m,\ldots,d_y}m^{\underline{l},m}_{\underline{i},j}(\underline{l}+\underline{i}_{\underline{k}}-mS(\underline{k})-\underline{i})\\
&=& q_{\underline{l},m}[\underline{l}+\underline{i}_{\underline{k}}-mS(\underline{k})]-G(\underline{M}_{\underline{l},m}).
\end{array} \end{equation} 
So, according to Formula (\ref{equ:b_lm}) and the new way of writing the expression\\ $\displaystyle\prod_{|\underline{i}|=0,\ldots,d_x \atop j=m,\ldots,d_y}\left(\displaystyle\sum_{|\underline{L}|=j-m \atop G(\underline{L})=\underline{l}+\underline{i}_{\underline{k}}-mS(\underline{k})-\underline{i}}\displaystyle\frac{j!}{m!\,\underline{L}!}\underline{C}^{\underline{L}}\right)^{m^{\underline{l},m}_{\underline{i},j}}$, we obtain:
\begin{center}
$\begin{array}{lcl}
b_{\underline{l},m}^{q_{\underline{l},m}}&=& \left(\displaystyle\frac{-1}{\omega_0}\right)^{q_{\underline{l},m}}\displaystyle\sum_{|\underline{M}_{\underline{l},m}|=q_{\underline{l},m}}\underline{A}^{\underline{M}_{\underline{l},m}}\displaystyle\sum_{|\underline{U}_{\underline{l},m}|=\|\underline{M}_{\underline{l},m}\|-m\,q_{\underline{l},m} \atop G(\underline{U}_{\underline{l},m})=q_{\underline{l},m}[\underline{l}+\underline{i}_{\underline{k}}-mS(\underline{k})]-G(\underline{M}_{\underline{l},m})} d_{\underline{U}_{\underline{l},m}}\underline{C}^{\underline{U}_{\underline{l},m}}\\
&& \textrm{ with }d_{\underline{U}_{\underline{l},m}}:=\displaystyle\sum_{ \left(\underline{N}^{\underline{l},m}_{\underline{i},j}\right)}\displaystyle\frac{q_{\underline{l},m}!}{\displaystyle\prod_{|\underline{i}|=0,\ldots,d_x \atop j=m,\ldots,d_y}\underline{N}^{\underline{l},m}_{\underline{i},j}!}\displaystyle\prod_{|\underline{i}|=0,\ldots,d_x \atop j=m,\ldots,d_y}\displaystyle\prod_{u=1}^{\alpha_{\underline{i},j}^{\underline{l},m}} \left(\displaystyle\frac{j!}{m!\,\underline{L}_{\underline{i},j,u}^{\underline{l},m}!}\right)^{n^{\underline{l},m}_{\underline{i},j,u}},
 \end{array}$
 \end{center} 
 \textrm{ where the sum is taken over }$$\left\{\left(\underline{N}^{\underline{l},m}_{\underline{i},j}\right)_{|\underline{i}|=0,\ldots,d_x \atop j=m,\ldots,d_y}\textrm{ such that }|\underline{N}^{\underline{l},m}_{\underline{i},j}|=m^{\underline{l},m}_{\underline{i},j}\textrm{ and }\displaystyle\sum_{|\underline{i}|=0,\ldots,d_x \atop j=m,\ldots,d_y}\sum_{u=1}^{\alpha_{\underline{i},j}^{\underline{l},m}} n^{\underline{l},m}_{\underline{i},j,u} \underline{L}_{\underline{i},j,u}^{\underline{l},m}=\underline{U}_{\underline{l},m}\right\}.$$
(Note that, if the latter set is empty, then $d_{\underline{U}_{\underline{l},m}}=0$.)\\

We deduce that:
$$\begin{array}{lcl}
\underline{B}^{\underline{Q}}&=&\displaystyle\prod_{\underline{l}=S(\underline{i}_{\underline{k}})- \underline{i}_{\underline{k}},\ldots,d_yS(\underline{k})+(d_x,0,\ldots,0)-\underline{i}_{\underline{k}}\atop m=0,\ldots,m_{\underline{l}}}b_{\underline{l},m}^{q_{\underline{l},m}}\\
&=& \left(\displaystyle\frac{-1}{\omega_0}\right)^{q}\displaystyle\prod_{\underline{l},m} \left[\displaystyle\sum_{|\underline{M}_{\underline{l},m}|=q_{\underline{l},m}}\underline{A}^{\underline{M}_{\underline{l},m}}\displaystyle\sum_{|\underline{U}_{\underline{l},m}|=\|\underline{M}_{\underline{l},m}\|-m\,q_{\underline{l},m} \atop \|\underline{U}_{\underline{l},m}\|=q_{\underline{l},m}\left(\underline{l}+\underline{i}_{\underline{k}}-mS(\underline{k})\right)-G(\underline{M}_{\underline{l},m})}d_{\underline{U}_{\underline{l},m}} \underline{C}^{\underline{U}_{\underline{l},m}}\right].
\end{array}$$ 
Now, in order to expand the latter product of sums, we consider the corresponding sets: $$\mathcal{S}_{\underline{Q}}:= \left\{\displaystyle\sum_{\underline{l},m}\underline{M}_{\underline{l},m}\ \ / \ \   \exists (\underline{M}_{\underline{l},m})\ \mathrm{s.t.}\ |\underline{M}_{\underline{l},m}|=q_{\underline{l},m}\textrm{ and }\forall \underline{l},m,\  m^{\underline{l},m}_{\underline{i},j}=0\textrm{ for }j<m\right\}$$ and, for any   $\underline{S}\in\mathcal{S}_{\underline{Q}}$,
\begin{center} $ \mathcal{U}_{\underline{Q},\underline{S}}:=\left\{\left(\underline{U}_{\underline{l},m}\right)\ \  / \ \  \exists (\underline{M}_{\underline{l},m})\ \mathrm{s.t.}\ |\underline{M}_{\underline{l},m}|=q_{\underline{l},m}\textrm{ and }\forall \underline{l},m,\  m^{\underline{l},m}_{\underline{i},j}=0\textrm{ for }j<m,\  \displaystyle\sum_{\underline{l},m}\underline{M}_{\underline{l},m}=\underline{S},\right.$\\
	$\ \ \ \ \ \ \ \ \    \ \ \  \ \  \    \left. |\underline{U}_{\underline{l},m}|=\|\underline{M}_{\underline{l},m}\|-m\,q_{\underline{l},m} \textrm{ and }G(\underline{U}_{\underline{l},m})=q_{\underline{l},m}\left(\underline{l}+\underline{i}_{\underline{k}}-mS(\underline{k})\right)-G(\underline{M}_{\underline{l},m})  \displaystyle\frac{}{}\right\}$\end{center}
and
\begin{center} $\mathcal{T}_{\underline{Q},\underline{S}}:=\left\{\displaystyle\sum_{\underline{l},m} \underline{U}_{\underline{l},m} \ \  / \ \  \left(\underline{U}_{\underline{l},m}\right)\in  \mathcal{U}_{\underline{Q},\underline{S}} \right\}.$\end{center}
 We have:
\begin{equation}\label{equ:BQ}
 \begin{array}{lcl}
\underline{B}^{\underline{Q}}&=& \left(\displaystyle\frac{-1}{\omega_0}\right)^{q}\displaystyle\sum_{\underline{S}\in\mathcal{S}_{\underline{Q}}} \underline{A}^{\underline{S}}\displaystyle\sum_{\underline{T}_{\underline{S}} \in\mathcal{T}_{\underline{Q},\underline{S}}} \left(\displaystyle\sum_{ \left(\underline{U}_{\underline{l},m}\right)\in  \mathcal{U}_{\underline{Q},\underline{S}} \atop \sum_{\underline{l},m} \underline{U}_{\underline{l},m}=\underline{T}_{\underline{S}}}\displaystyle\prod_{\underline{l},m} d_{\underline{U}_{\underline{l},m}}\right)       \underline{C}^{\underline{T}_{\underline{S}}}\\
&=&\left(\displaystyle\frac{-1}{\omega_0}\right)^{q}\displaystyle\sum_{\underline{S}\in\mathcal{S}_{\underline{Q}}} \underline{A}^{\underline{S}}\displaystyle\sum_{\underline{T}_{\underline{S}} \in\mathcal{T}_{\underline{Q},\underline{S}}}e_{\underline{Q},\underline{T}_{\underline{S}}} \underline{C}^{\underline{T}_{\underline{S}}}.\end{array}\end{equation}
\textrm{ where }: $$e_{\underline{Q},\underline{T}_{\underline{S}}}:= \displaystyle\sum_{\left(\underline{N}^{\underline{l},m}_{\underline{i},j}\right)} \displaystyle\frac{\displaystyle\prod_{\underline{l},m}q_{\underline{l},m}!}{\displaystyle\prod_{\underline{l},m}\displaystyle\prod_{\underline{i},j} \underline{N}^{\underline{l},m}_{\underline{i},j}!} \displaystyle\prod_{\underline{l},m} \displaystyle\prod_{\underline{i},j}\displaystyle\prod_{u}\left(\displaystyle\frac{j!}{m!\,L_{\underline{i},j,u}^{\underline{l},m}!}\right)^{n^{\underline{l},m}_{\underline{i},j,u}}$$
 and  where the previous sum is taken over:  
 \begin{center}$ \mathcal{E}_{\underline{Q},\underline{T}_{\underline{S}}}:=\left\{\left( \underline{N}^{\underline{l},m}_{\underline{i},j}\right)_{\underline{l}= S(\underline{i}_{\underline{k}})-\underline{i}_{\underline{k}},\ldots,d_yS(\underline{k})+(d_x,0,\ldots,0)-\underline{i}_{\underline{k}},\, m=0,\ldots,m_{\underline{l}} \atop |\underline{i}|=0,\ldots,d_x,\ j=m,\ldots,d_y}\ \ /\ \  \forall \underline{i},j,\ \displaystyle\sum_{\underline{l},m} \sum_{u=1}^{\alpha_{\underline{i},j}^{\underline{l},m}}n^{\underline{l},m}_{\underline{i},j,u}=s_{\underline{i},j}, \right.$\\
$\left. \ \ \ \ \ \  \ \ \ \ \  \ \ \ \ \ \ \ \  \  \ \  \ \ \ \ \ \ \ \  \ \ \ \ \ \  \ \ \ \ \ \ \ \ \ \  \forall \underline{l},m,\ \displaystyle\sum_{\underline{i},j}|\underline{N}^{\underline{l},m}_{\underline{i},j}|=q_{\underline{l},m}, \textrm{ and }  \displaystyle\sum_{\underline{l},m} \displaystyle\sum_{\underline{i}, j}\sum_{u=1}^{\alpha_{\underline{i},j}^{\underline{l},m}} n^{\underline{l},m}_{\underline{i},j,u} \underline{L}_{\underline{i},j,u}^{\underline{l},m} =\underline{T}_{\underline{S}} \right\}.$\end{center}
(Note that, if the latter set is empty, then $e_{\underline{Q},\underline{T}_{\underline{S}}}=0$.)\\

Observe that $\displaystyle\frac{1}{q}\displaystyle\frac{q!}{Q!}e_{\underline{Q},\underline{T}_{\underline{S}}}$ lies in $\mathbb{N}$ as a coefficient of $(-{\omega_0}) ^{q}\displaystyle\frac{1}{q}\displaystyle\frac{q!}{Q!}B^{Q}$ as seen before.
Note also that, for any $\underline{Q}$ and for any $\underline{S}\in\mathcal{S}_{\underline{Q}}$,  $|\underline{S}|=\displaystyle\sum_{\underline{l},m}q_{\underline{l},m}=q$ and $\|\underline{S}\|\geq \displaystyle\sum_{\underline{l},m}mq_{\underline{l},m}=\|\underline{Q}\|=q-1$. Moreover, for any $\underline{T}_{\underline{S}}\in\mathcal{T}_{\underline{Q},\underline{S}}$: 
$$\begin{array}{lcl}
|\underline{T}_{\underline{S}}|&=&\displaystyle\sum_{\underline{l},m} \|\underline{M}_{\underline{l},m}\|-m\,q_{\underline{l},m}\\
 &=&\|\underline{S}\|-\|\underline{Q}\|\\
&=&\|\underline{S}\|-q+1
\end{array}$$ and:
$$\begin{array}{lcl}
G(\underline{T}_{\underline{S}})&=& \displaystyle\sum_{l,m}q_{l,m}\left(\underline{l}+\underline{i}_{\underline{k}}-mS(\underline{k})\right)-G(\underline{M}_{\underline{l},m})\\
&=&G(\underline{Q})+|\underline{Q}|\,\underline{i}_{\underline{k}}-\|\underline{Q}\|\,S(\underline{k})-G(\underline{S})\\
&=&\underline{n}+q\,\underline{i}_{\underline{k}}-(q-1)\,S(\underline{k})-G(\underline{S}).
\end{array}$$ Let us show that:
\begin{equation}\label{equ:BQAS}
  \begin{array}{lcl}
\displaystyle\sum_{|\underline{Q}|=q,\,\|\underline{Q}\|=q-1,\, G(\underline{Q})=\underline{n}}\displaystyle\frac{q!}{\underline{Q}!}\underline{B}^{\underline{Q}}&=&  \left(\displaystyle\frac{-1}{\omega_0}\right)^{q}\displaystyle\sum_{|\underline{S}|=q,\, \|\underline{S}\|\geq q-1}\underline{A}^{\underline{S}}\displaystyle\sum_{|\underline{T}_{\underline{S}}|=\|\underline{S}\|-q+1 \atop G(\underline{T}_{\underline{S}})=\underline{n}+q\underline{i}_{\underline{k}}-(q-1)S(\underline{k})-G(\underline{S})}e_{\underline{T}_{\underline{S}}}\underline{C}^{\underline{T}_{\underline{S}}},\end{array} 
\end{equation} 
\textrm{ where } $e_{\underline{T}_{\underline{S}}}:=\displaystyle\sum_{ \left(\underline{N}^{\underline{l},m}_{\underline{i},j}\right)}\displaystyle\frac{q!}{\displaystyle\prod_{\underline{l},m}\displaystyle\prod_{\underline{i},j} \underline{N}^{\underline{l},m}_{\underline{i},j}!}\displaystyle\prod_{\underline{l},m} \displaystyle\prod_{\underline{i},j}\displaystyle\prod_{u}\left(\displaystyle\frac{j!}{m!\,\underline{L}_{\underline{i},j,u}^{\underline{l},m}!}\right)^{n^{\underline{l},m}_{\underline{i},j,u}}$  and  \textrm{ where the sum is taken over }

\begin{center}
 $\mathcal{E}_{\underline{T}_{\underline{S}}}:=\left\{\left(\underline{N}^{\underline{l},m}_{\underline{i},j}\right)_{\underline{l}= S(\underline{i}_{\underline{k}})-\underline{i}_{\underline{k}},\ldots,d_yS(\underline{k})+(d_x,0,\ldots,0)-\underline{i}_{\underline{k}},\, m=0,\ldots,m_{\underline{l}} \atop |\underline{i}|=0,\ldots,d_x,\ j=m,\ldots,d_y}\textrm{ s.t. }\displaystyle\sum_{\underline{l},m}\displaystyle\sum_{u} n^{\underline{l},m}_{\underline{i},j,u}=s_{\underline{i},j},\ 
\displaystyle\sum_{\underline{l},m}\displaystyle\sum_{\underline{i},j}|\underline{N}^{\underline{l},m}_{\underline{i},j}|=q\right.$\\
$ \  \ \ \ \ \ \ \ \  \  \ \  \ \ \ \ \ \ \ \  \ \ \ \ \ \  \ \ \ \ \ \ \ \ \ \  \  \ \ \ \ \ \ \ \  \  \ \  \ \ \ \ \ \ \ \  \ \ \ \ \ \  \ \ \ \ \ \ \  \ \ \ \ \ \ \  \ \ \  \left.\textrm{ and } \displaystyle\sum_{\underline{l},m}\displaystyle\sum_{\underline{i},j} \sum_{u=1}^{\alpha_{\underline{i},j}^{\underline{l},m}}n^{\underline{l},m}_{\underline{i},j,u} \underline{L}_{\underline{i},j,u}^{\underline{l},m}=\underline{T}_{\underline{S}}\right\}.$	 
\end{center} 
(Note that, if the latter set is empty, then $e_{ \underline{T}_{\underline{S}}}=0$.)\\
Recall that  $\underline{N}^{\underline{l},m}_{\underline{i},j}!= \displaystyle\prod_{u=1}^{\alpha_{\underline{i},j}^{\underline{l},m}} n^{\underline{l},m}_{\underline{i},j,u}!$ and that the $\underline{L}^{\underline{l},m}_{\underline{i},j,u}$'s enumerate the $\underline{L}$'s such that $|\underline{L}|=j-m$ and $G(\underline{L})=\underline{l}+\underline{i}_{\underline{k}}-m\,S(\underline{k})-\underline{i}$ for given $\underline{l},m,\underline{i},j$.

Let us consider  $\underline{S}$ and  $\underline{T}_{\underline{S}}$ such that $|\underline{S}|=q,\, \|\underline{S}\|\geq q-1$, $|\underline{T}_{\underline{S}}|=\|\underline{S}\|-q+1,\  G(\underline{T}_{\underline{S}})=\underline{n}+q\underline{i}_{\underline{k}}-(q-1)S(\underline{k})-G(\underline{S})$ and such that $\mathcal{E}_{T_S}\neq \emptyset$. Take an element $( n^{l,m}_{i,j,u})\in \mathcal{E}_{\underline{T}_{\underline{S}}}$. Define $ m^{\underline{l},m}_{\underline{i},j}:=\displaystyle\sum_{u=1}^{\alpha_{\underline{i},j}^{\underline{l},m}} n^{\underline{l},m}_{\underline{i},j,u}$ for each $\underline{i},\,j,\,\underline{l},\,m$ with $j\geq m$, and $m^{\underline{l},m}_{\underline{i},j}:=0$ if $j<m$. Set $\underline{M}_{\underline{l},m}:=(m^{\underline{l},m}_{\underline{i},j})_{\underline{i},j}$ for each $\underline{l},\,m$. So, $\displaystyle\sum_{\underline{l},m}m^{\underline{l},m}_{\underline{i},j}=\displaystyle\sum_{\underline{l},m}\sum_{u=1}^{\alpha_{\underline{i},j}^{\underline{l},m}} n^{\underline{l},m}_{\underline{i},j,u}=s_{\underline{i},j}$, and $\underline{S}=\displaystyle\sum_{\underline{l},m}\underline{M}_{\underline{l},m}$. Define $q_{\underline{l},m}:=\displaystyle\sum_{\underline{i},j}m^{\underline{l},m}_{\underline{i},j}=|\underline{M}_{\underline{l},m}|$ for each $\underline{l},\,m$, and $\underline{Q}:=(q_{\underline{l},m})$. Let us show that $|\underline{Q}|=q$, $ G(\underline{Q})=\underline{n}$ and $\|\underline{Q}\|=q-1$. By definition of $\mathcal{E}_{\underline{T}_{\underline{S}}}$, $$|\underline{Q}|:=\displaystyle\sum_{\underline{l},m}q_{\underline{l},m}= \displaystyle\sum_{\underline{l},m}\displaystyle\sum_{\underline{i},j} \sum_{u=1}^{\alpha_{\underline{i},j}^{\underline{l},m}}n^{\underline{l},m}_{\underline{i},j,u}=q.$$
Recall that $\|\underline{Q}\|:=\displaystyle\sum_{\underline{l},m}mq_{\underline{l},m}$. We have:
$$\begin{array}{ll}
&|\underline{T}_{\underline{S}}|= \left|\displaystyle\sum_{\underline{l},m}\displaystyle\sum_{\underline{i},j} \sum_{u=1}^{\alpha_{\underline{i},j}^{\underline{l},m}}n^{\underline{l},m}_{\underline{i},j,u}\underline{L}_{\underline{i},j,u}^{\underline{l},m}\right|=\|\underline{S}\| -q+1\\
\Leftrightarrow & \displaystyle\sum_{\underline{l},m}\displaystyle\sum_{\underline{i},j} \sum_{u=1}^{\alpha_{\underline{i},j}^{\underline{l},m}}n^{\underline{l},m}_{\underline{i},j,u}|\underline{L}_{\underline{i},j,u}^{\underline{l},m}|=\displaystyle\sum_{\underline{i},j}js_{\underline{i},j}-q+1\\
\Leftrightarrow & \displaystyle\sum_{\underline{l},m}\displaystyle\sum_{\underline{i},j} \sum_{u=1}^{\alpha_{\underline{i},j}^{\underline{l},m}}n^{\underline{l},m}_{\underline{i},j,u}(j-m)= \displaystyle\sum_{\underline{i},j}js_{\underline{i},j}-q+1\\
\Leftrightarrow &\displaystyle\sum_{\underline{i},j} j\displaystyle\sum_{\underline{l},m} \sum_{u=1}^{\alpha_{\underline{i},j}^{\underline{l},m}}n^{\underline{l},m}_{\underline{i},j,u}- \displaystyle\sum_{\underline{l},m}m\displaystyle\sum_{\underline{i},j} \sum_{u=1}^{\alpha_{\underline{i},j}^{\underline{l},m}}n^{\underline{l},m}_{\underline{i},j,u}= \displaystyle\sum_{\underline{i},j}js_{\underline{i},j}-q+1\\
\Leftrightarrow &\displaystyle\sum_{\underline{i},j} js_{\underline{i},j}-\displaystyle\sum_{\underline{l},m}mq_{\underline{l},m} =\displaystyle\sum_{\underline{i},j}js_{\underline{i},j}-q+1\\
\Leftrightarrow & \|\underline{Q}\|=q-1.
\end{array} $$
Recall that $ G(\underline{Q}):=\displaystyle\sum_{\underline{l},m}q_{\underline{l},m}\underline{l}$. We have:
$$\begin{array}{ll}
&G(\underline{T}_{\underline{S}})= G\left(\displaystyle\sum_{\underline{l},m}\displaystyle\sum_{\underline{i},j} \sum_{u=1}^{\alpha_{\underline{i},j}^{\underline{l},m}}n^{\underline{l},m}_{\underline{i},j,u}\underline{L}_{\underline{i},j,u }^{\underline{l},m}\right)=\underline{n}+q\,\underline{i}_{\underline{k}} -(q-1)S(\underline{k}) -G(\underline{S})\\
\Leftrightarrow & \displaystyle\sum_{\underline{l},m}\displaystyle\sum_{\underline{i},j} \sum_{u=1}^{\alpha_{\underline{i},j}^{\underline{l},m}}n^{\underline{l},m}_{\underline{i},j,u}G(\underline{L}_{\underline{i},j,u}^{\underline{l},m})=\underline{n}+q\,\underline{i}_{\underline{k}} -(q-1)S(\underline{k}) -G(\underline{S})\\
\Leftrightarrow & \displaystyle\sum_{\underline{l},m}\displaystyle\sum_{\underline{i},j} \sum_{u=1}^{\alpha_{\underline{i},j}^{\underline{l},m}}n^{\underline{l},m}_{\underline{i},j,u}(\underline{l}+\underline{i}_{\underline{k}} -mS(\underline{k}) -\underline{i})= \underline{n}+q\, \underline{i}_{\underline{k}} -(q-1)S(\underline{k}) - G(\underline{S})\\
\Leftrightarrow & \begin{array}{l}
\displaystyle\sum_{\underline{l},m}\underline{l}\displaystyle\sum_{\underline{i},j} \sum_{u=1}^{\alpha_{\underline{i},j}^{\underline{l},m}}n^{\underline{l},m}_{\underline{i},j,u}+
\underline{i}_{\underline{k}} \displaystyle\sum_{\underline{l},m}\displaystyle\sum_{\underline{i},j} \sum_{u=1}^{\alpha_{\underline{i},j}^{\underline{l},m}}n^{\underline{l},m}_{\underline{i},j,u}
-S(\underline{k}) \displaystyle\sum_{\underline{l},m}m\displaystyle\sum_{\underline{i},j} \sum_{u=1}^{\alpha_{\underline{i},j}^{\underline{l},m}}n^{\underline{l},m}_{\underline{i},j,u}
 \\     
\ \ \ \ \ \ \ \ \ \ \ \ \ \ \  \ \ \ \ \ \ \ \ \ \ \ \ \ \ \ \ \  \ \ \ \ \ \ \ \ \ \ \ \ \ \ \  -\displaystyle\sum_{\underline{i},j}\underline{i} \displaystyle\sum_{\underline{l},m} \sum_{u=1}^{\alpha_{\underline{i},j}^{\underline{l},m}}n^{\underline{l},m}_{\underline{i},j,u}=\underline{n}+q\, \underline{i}_{\underline{k}} -(q-1)S(\underline{k}) -G(\underline{S}) \\

\end{array}\\
\Leftrightarrow & \displaystyle\sum_{\underline{l},m}q_{\underline{l},m}\underline{l}+q\,\underline{i}_{\underline{k}}-S(\underline{k}) \displaystyle\sum_{\underline{l},m}m\, q_{\underline{l},m}-\displaystyle\sum_{\underline{i},j} s_{i,j}\underline{i}= \underline{n}+q\, \underline{i}_{\underline{k}} -(q-1)S(\underline{k}) -G(\underline{S}) \\
\Leftrightarrow & G(\underline{Q})+q\, \underline{i}_{\underline{k}}-\|Q\|S(\underline{k})-G(\underline{S})=\underline{n}+q\, \underline{i}_{\underline{k}} -(q-1)S(\underline{k}) -G(\underline{S}).
\end{array} $$
Since $\|\underline{Q}\|=q-1$, we deduce that $G(\underline{Q})=\underline{n}$ as desired. So, $\underline{S}\in \mathcal{S}_{\underline{Q}}$ for $\underline{Q}$ as in the left-hand side of (\ref{equ:BQAS}).\\
Now, set $\underline{U}_{\underline{l},m}:=\displaystyle\sum_{\underline{i},j} \sum_{u=1}^{\alpha_{\underline{i},j}^{\underline{l},m}}n^{\underline{l},m}_{\underline{i},j,u} \underline{L}_{\underline{i},j,u}^{\underline{l},m}$, so $\displaystyle\sum_{\underline{l},m}\underline{U}_{\underline{l},m}=\underline{T}_{\underline{S}}$. Let us show that $(\underline{U}_{\underline{l},m})\in \mathcal{U}_{\underline{Q},\underline{S}}$, which implies that $\underline{T}_{\underline{S}}\in \mathcal{T}_{\underline{Q},\underline{S}}$ as desired. The existence of $(\underline{M}_{\underline{l},m})$ such that $|\underline{M}_{\underline{l},m}|=q_{\underline{l},m}\textrm{ and } \  m^{\underline{l},m}_{\underline{i},j}=0\textrm{ for }j<m$ and $\displaystyle\sum_{\underline{l},m}\underline{M}_{\underline{l},m}=\underline{S}$ follows by construction. Conditions $|\underline{U}_{\underline{l},m}|=\|\underline{M}_{\underline{l},m}\|-m\,q_{\underline{l},m}  \textrm{ and }G(\underline{U}_{\underline{l},m})=q_{\underline{l},m}[\underline{l}+\underline{i}_{\underline{k}}-mS(\underline{k})]-G(\underline{M}_{\underline{l},m})$ are obtained exactly as in (\ref{equ:Ulm}) and (\ref{equ:Ulm2}). This shows that $(n^{\underline{l },m}_{\underline{i},j,u}) \in \mathcal{E}_{\underline{Q},\underline{T}_{\underline{S}}}$, so:
$$ \mathcal{E}_{\underline{T}_{\underline{S}}}\subseteq \bigcupdot_{|\underline{Q}|=q,\, G(\underline{Q})=\underline{n},\,\|Q\|=q-1}  \mathcal{E}_{\underline{Q},\underline{T}_{\underline{S}}}. $$
The reverse inclusion holds trivially since $|\underline{Q}|=q$, so:
$$ \mathcal{E}_{\underline{T}_{\underline{S}}} = \bigcupdot_{|\underline{Q}|=q,\, G(\underline{Q})=\underline{n},\,\|\underline{Q}\|=q-1}  \mathcal{E}_{\underline{Q},\underline{T}_{\underline{S}}}. $$
We deduce that:
$$ e_{\underline{T}_{\underline{S}}}=\displaystyle\sum_{|\underline{Q}|=q,\, G(\underline{Q})=\underline{n},\,\|\underline{Q}\|=q-1}\displaystyle\frac{q!}{\underline{Q}!} e_{\underline{Q},\underline{T}_{\underline{S}}}.$$
We conclude that any term occuring in the right-hand side of (\ref{equ:BQAS}) comes from a term from the left-hand side. 

Conversely, for any $\underline{Q}$ as in the  left-hand side of  Formula (\ref{equ:BQAS}), $\underline{S}\in\mathcal{S}_{\underline{Q}}$ and $\underline{T}_{\underline{S}} \in \mathcal{T}_{\underline{Q},\underline{S}}$ verify the following conditions:
$$|\underline{S}|=q,\ \ \|\underline{S}\|\geq q-1,\ \ |\underline{T}_{\underline{S}}|=\|\underline{S}\|-q+1 ,\ \ \|\underline{T}_{\underline{S}}\|=\underline{n}+q\,\underline{i}_{\underline{k}}-(q-1)S(\underline{k})-G(\underline{S})$$
and
$$ \mathcal{E}_{\underline{T}_{\underline{S}}} = \bigcupdot_{|\underline{Q}|=q,\, G(\underline{Q})=\underline{n},\,\|\underline{Q}\|=q-1}  \mathcal{E}_{\underline{Q},\underline{T}_{\underline{S}}},\ \ \ \  e_{\underline{T}_{\underline{S}}}=\displaystyle\sum_{|\underline{Q}|=q,\, G(\underline{Q})=\underline{n},\,\|\underline{Q}\|=q-1}\displaystyle\frac{q!}{\underline{Q}!} e_{\underline{Q},\underline{T}_{\underline{S}}}. $$
Hence, any term occuring in the expansion of $\underline{B}^{\underline{Q}}$ contributes to the right hand side of Formula (\ref{equ:BQAS}).

Thus we obtain  Formula (\ref{equ:BQAS}) from which the statement of Corollary \ref{coro:FS} follows. Note also that:
$$ \displaystyle\frac{1}{q}e_{\underline{T}_{\underline{S}}}=\displaystyle\sum_{|\underline{Q}|=q,\, G(\underline{Q})=\underline{n},\,\|\underline{Q}\|=q-1}\displaystyle\frac{1}{q}\displaystyle\frac{q!}{\underline{Q}!} e_{\underline{Q},\underline{T}_{\underline{S}}},$$
so $ \displaystyle\frac{1}{q}e_{\underline{T}_{\underline{S}}}\in\N$.
\end{proof}

\begin{remark}\label{rem:omega_0}
We have seen in Theorem \ref{theo:FS} and its proof (see Formula (\ref{equ:p_{k+1}}) with $\underline{k}=\underline{k}_0$) that $\omega_0=(\pi^P_{\underline{k}_0,\underline{i}_{\underline{k}_0}})'(c_{S(\underline{k}_0)})$ is the coefficient of the monomial $\underline{x}^{\underline{i}_{S(\underline{k}_0)}}y$ in the expansion of $P_{S(\underline{k}_0)}(\underline{x},y)=P(\underline{x},c_{(0,\ldots,0,1)}x_r+\cdots+c_{S(\underline{k}_0)}\underline{x}^{S(\underline{k}_0)}+ \underline{x}^{S^2(\underline{k}_0)}y)$, and that  $c_{S^2(\underline{k}_0)}=\displaystyle\frac{-\pi^P_{\underline{k}_0,\underline{i}_{S(\underline{k}_0)}}(c_{S(\underline{k}_0)})}{\omega_0}$ where $\pi^P_{\underline{k}_0,\underline{i}_{S(\underline{k}_0)}}(c_{S(\underline{k}_0)})$ is the coefficient of $\underline{x}^{\underline{i}_{S(\underline{k}_0)}}$ in the expansion of $P_{S(\underline{k}_0)}(\underline{x},y)$. Expanding $P_{S(\underline{k}_0)}(\underline{x},y)$, having done the whole computations, we deduce that:
$$\left\{\begin{array}{lcl}
\omega_0&=&\displaystyle\sum_{|\underline{i}|=0,..,d_x,\ j=1,..,d_y}\ \ \displaystyle\sum_{|\underline{L}|=j-1,\  G(\underline{L})=\underline{i}_{\underline{k}_0}-S(\underline{k}_0)-\underline{i}}\displaystyle\frac{j!}{\underline{L}!}a_{\underline{i},j}\underline{C}^{\underline{L}}\ ;\\
c_{S^2(k_0)}&=& \displaystyle\frac{-1}{\omega_0}\displaystyle\sum_{|\underline{i}|=0,..,d_x,\ j=0,..,d_y}\ \ \displaystyle\sum_{|\underline{L}|=j,\  G(\underline{L})=\underline{i}_{S(\underline{k}_0)}-\underline{i}}\ \ \displaystyle\frac{j!}{L!}a_{\underline{i},j}\underline{C }^{\underline{L}},
\end{array}\right. $$
where $\underline{C}:=\left(c_{(0,\ldots,0,1)},\ldots,c_{S(\underline{k}_0)}\right)$ and $\underline{L}:=\left(l_{(0,\ldots,0,1)},\ldots,l_{S(\underline{k}_0)}\right)$.
\end{remark}

\begin{ex}\label{ex:FS}
In order to illustrate Corollary \ref{coro:FS} and its proof, we resume the polynomial of  Example \ref{ex:wilc} with $a_{0,0,2}\neq 0$:
$$\begin{array}{lcl}
P(\underline{x},y)&=&a_{0,0,2}y^2+\left(a_{0,2,0}+a_{0,2,1}y+a_{0,2,2}y^2\right) {x_2}^2+\left(a_{2,2,0}   +a_{2,2,1}y+a_{2,2,2}y^2\right)x_1^2{x_2}^2\\
P_{\underline{0}}(\underline{x},y)&=&\left(a_{0,2,0}+a_{0,0,2}y^2\right)  {x_2}^2+a_{0,2,1}y{x_2}^3+a_{0,2,2}y^2{x_2}^4+
a_{2,2,0} {x_1}^2{x_2}^2  \\
&&+a_{2,2,1}y{x_1}^2{x_2}^3+a_{2,2,2}y^2 {x_1}^2{x_2}^4\\
P_{0,1}(\underline{x},y)&=&  2a_{0,0,2}c_{0,1}yx_1x_2+a_{0,0,2}{x_1}^2 y^2 + a_{0,2,1}c_{0,1}{x_2}^3 +a_{0,2,1}x_1{x_2}^2y+ a_{0,2,2}c_{0,1}^2{x_2}^4\\
&&+2a_{0,2,2}c_{0,1}x_1{x_2}^3y+\left(a_{2, 2, 0}+ a_{0,2,2}y^2\right){x_1}^2 {x_2}^2
 +a_{2,2,1}c_{0,1}{x_1}^2{x_2}^3\\
&&+a_{2,2,1}y{x_1}^3{x_2}^2+ a_{2,2,2}c_{0,1}^2{x_1}^2{x_2}^4 +2a_{2,2,2}c_{0,1}y{x_1}^3{x_2}^3+a_{2,2,2}y^2{x_1}^4{x_2}^2\\
&&\textrm{with }a_{0,2,0}+a_{0,0,2}{c_{0,1}}^2=0\,\Leftrightarrow\, c_{0,1}= \pm\sqrt{\displaystyle\frac{-a_{0,2,0}}{a_{0,0,2}}}.
\end{array}$$
 Thus, $ \underline{i}_{\underline{0}}=(0,2)$, $\underline{i}_{0,1}=(1,1)=\underline{i}_{\underline{0}}+(1,0)-(0,1)$, so $\underline{k}_0=\underline{0}$, $\omega_0=2\,a_{0,0,2}\,c_{0,1}$. The coefficient $c_{1,0}$ must verify $2a_{0,2}c_{0,1}c_{1,0} =0\, \Leftrightarrow\, c_{1,0}=0$ since $a_{0,2}c_{0,1}\neq 0$. We obtain that:
\begin{center}
$\begin{array}{lcl}
-\omega_0\cdot \, _{0,1}R&=&\displaystyle\frac{P_{0,1}(\underline{x},y+c_{1,0})}{x_1x_2}\\
&=&  \omega_0y +a_{0,0,2}x_1{x_2}^{-1} y^2 + a_{0,2,1}c_{0,1}{x_1}^{-1}{x_2}^2 +a_{0,2,1}x_2y+ a_{0,2,2}{c_{0,1}}^2{x_1}^{-1}{x_2}^3\\
&&+2a_{0,2,2}c_{0,1}{x_2}^2y+\left(a_{2, 2, 0}+ a_{0,2,2}y^2\right)x_1 x_2 +a_{2,2,1}c_{0,1}x_1{x_2}^2+a_{2,2,1}y{x_1}^2{x_2}\\
&&+ a_{2,2,2}{c_{0,1}}^2x_1{x_2}^3 +2a_{2,2,2}c_{0,1}y{x_1}^2{x_2}^2+a_{2,2,2}y^2{x_1}^3x_2.
\end{array}$
 \end{center}
So the coefficients of the corresponding strongly reduced Henselian equation $y=\, _{0,1}Q(x,y)$ are:
 \begin{center}
 $\begin{array}{lll}
 b_{1,-1,2}=-a_{0,0,2}/\omega_0, &b_{-1,2,0}=-a_{0,2,1}c_{0,1}/\omega_0, & b_{0,1,1}=-a_{0,2,1}/\omega_0,\\
 b_{-1,3,0}=-a_{0,2,2}{c_{0,1}}^2/\omega_0,& b_{0,2,1}= -2\,a_{0,2,2}c_{0,1}/\omega_0,& b_{1,1,0}=-a_{2,2,0} /\omega_0,\\
 b_{1,1,2}=-a_{0,2,2} /\omega_0,& b_{1,2,0}=-a_{2,2,1}c_{0,1}/\omega_0, &b_{2,1,1}=-a_{2,2,1} /\omega_0, \\
b_{1,3,0}=-a_{{2,2,2}}/\omega_0,&  b_{2,2,1}=-2a_{2,2,2}c_{0,1}/\omega_0,& b_{3,1,2}=-a_{2,2,2}/\omega_0.
 \end{array}$\end{center}
According to Example  \ref{ex:FS-gene}, applying the generalized Flajolet-Soria's Formula \ref{theo:formule-FS}, one obtains for the first terms that are  not trivially zero:\\
$\begin{array}{lcl}
c_{0,2}&=&b_{-1,2,0}=\displaystyle\frac{ -a_{0,2,1}c_{0,1} }{2\,a_{0,0,2}\,c_{0,1}}\\
&=&\displaystyle\frac{ -a_{0,2,1} }{2\,a_{0,0,2}};\\
c_{0,3}&=&b_{{-1,3,0}}+b_{{0,1,1}}b_{{-1,2,0}}+b_{{1,-1,2}}{b_{{-1,2,0}}}^{2}\\
&=&-{\displaystyle\frac {a_{{0,2,2}}{c_{0,1}}^{2}}{2\,a_{0,0,2}\,c_{0,1}}}+{\displaystyle\frac {{a_{{0,2,1}}}^{2}c_{0,1}}{\left(2\,a_{0,0,2}\,c_{0,1}\right)^{2}}} -{\displaystyle\frac {a_{{0,0,2}}{a_{{0,2,1}}}^{2}{c_{0,1}}^{2}}{\left(2\,a_{0,0,2}\,c_{0,1}\right)^{3}}} \\
&=&-{\displaystyle\frac {a_{{0,2,2}}{c_{0,1}}}{2\,a_{0,0,2}}}+{\displaystyle\frac {{a_{{0,2,1}}}^{2}}{8\,a_{0,0,2}^2\,c_{0,1}}};\\
c_{2,1}&=&b_{{1,1,0}}=-{\displaystyle\frac {a_{{2,2,0}} }{2\,a_{0,0,2}\,c_{0,1}}};\\
c_{0,4}&=&b_{{0,2,1}}b_{{-1,2,0}}+b_{{0,1,1}}b_{{-1,3,0}}+2\,b_{{1,-1,2}}b_{{-1,2,0}}b_{{-1,3,0}}+{b_{{0,1,1}}}^{
2}b_{{-1,2,0}}\\
&&+3\,b_{{0,1,1}}b_{{1,-1,2}}{b_{
{-1,2,0}}}^{2}+2\,{b_{{1,-1,2}}}^{2}{b_{{-1,2,0}}}^{3}\\
&=& \displaystyle\frac{1}{2}\,{\displaystyle\frac {a_{{0,2,1}}a_{{0,2,2}}}{{a_{{0,0,2}}}^{2}}};\\
&\vdots&\end{array}$
\end{ex}


As a consequence of Theorem \ref{theo:wilc} and Corollary \ref{coro:FS}, we get the following result. Let $d_x$, $d_y$ be some fixed degrees, and a multi-index $\underline{p}\in\mathbb{N}^r$ such that $|\underline{p}|>2d_xd_y+2$. There is a finite number of universal polynomial formulas which compute the coefficient $c_{\underline{p}}$ of any algebraic series $y_0$ of degrees at most $d_x$, $d_y$. These formulas are evaluated at the first coefficients of the terms  of $y_0$ of degree at most  $2d_xd_y+2$, and their number is independent of $\underline{p}$. More precisely:

\begin{coro}\label{coro:param-ratio}
Let $d_x,d_y\in \mathbb{N}^*$. We set $M_1:=\displaystyle\frac{1}{2}d_y(d_y+1)\displaystyle\binom{d_x+r}{r}+d_y-2$, $M_2:=2\left(d_y(2d_yd_x+1)+d_x+1\right)^{r-1}$ and  $\underline{k}:=(0,\ldots,0,1,2d_xd_y)$. There exists a finite set $\Lambda$ and for any $\lambda\in\Lambda$, there exist a polynomial $\Omega^{(\lambda)}(C_{(0,\ldots,0,1)},\ldots,C_{\underline{k}})\in \mathbb{Z}\left[C_{(0,\ldots,0,1)},\ldots,C_{\underline{k}}\right]$, $ \deg \Omega^{(\lambda)}\leq M_1$, and for every $\underline{n}\in\mathbb{Z}^r$,  $\underline{n}:=\underline{p}-S(\underline{k})$ for $\underline{p}\in\mathbb{N}^r$, $\underline{p}>_{\mathrm{grlex}}S(\underline{k})$, a polynomial $\Psi^{(\lambda)}_{\underline{n}}(C_{(0,\ldots,0,1)},\ldots,C_{S(\underline{k})})\in \mathbb{Z}\left[C_{(0,\ldots,0,1)},\ldots,C_{S(\underline{k})}\right]$, $ \deg \Psi^{(\lambda)}_{\underline{n}}\leq M_2|\underline{n}|(M_1+d_x)+1  $, such that for every $y_0=\displaystyle\sum_{\underline{p}>_{\mathrm{grlex}} \underline{0}}c_{\underline{p}}\underline{x }^{\underline{p}}$, $c_{(0,\ldots,0,1)}\neq 0$, algebraic with vanishing polynomial of degrees bounded by $d_x$ in $\underline{x}$ and $d_y$ in $y$, there exists $\lambda\in \Lambda$ such that for every $\underline{n}\in\mathbb{Z}^r$,  $\underline{n}:=\underline{p}-S(\underline{k})$ for $\underline{p}\in\mathbb{N}^r$, $\underline{p}>_{\mathrm{grlex}}S(\underline{k})$:
$$c_{\underline{p}}=c_{S(\underline{k})+\underline{n}}= \displaystyle\frac{\Psi^{(\lambda)}_{\underline{n}}(c_{(0,\ldots,0,1)},\ldots,c_{S(\underline{k})})}{\Omega^{(\lambda)}(c_{(0,\ldots,0,1)},\ldots,c_{\underline{k}})^{M_2|\underline{n}|}}.$$
\end{coro}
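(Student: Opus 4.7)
The plan is to combine the reconstruction of Theorem~\ref{theo:wilc} with the closed-form expression of Corollary~\ref{coro:FS}. Given an algebraic series $y_0$ with degree bounds $(d_x,d_y)$, Theorem~\ref{theo:wilc} supplies a finite set $\Lambda$ and, for each $\lambda\in\Lambda$, a family $(a_{\underline{i},j}^{(\lambda)})$ of polynomials in $\mathbb{Z}[C_{(0,\ldots,0,1)},\ldots,C_{(N,0,\ldots,0)}]$ (with $N=2d_xd_y$) such that, for some $\lambda$, the polynomial $P^{(\lambda)} = \sum a_{\underline{i},j}^{(\lambda)}(c_{\cdot})\underline{x}^{\underline{i}}y^j$ is nonzero and vanishes at $y_0$. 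Choosing $\underline{k} = (0,\ldots,0,1,2d_xd_y)$, so that $|\underline{k}| = 2d_xd_y + 1$ satisfies the hypothesis of Corollary~\ref{coro:FS}, we obtain
\[
c_{\underline{p}} = \sum_{q=1}^{\mu_{\underline{n}}} \frac{1}{q}\left(\frac{-1}{\omega_0}\right)^{q} F_{q,\underline{n}}\bigl((a_{\underline{i},j}^{(\lambda)})_{\underline{i},j},(c_{\underline{m}})_{\underline{m}\leq_{\mathrm{grlex}}S(\underline{k})}\bigr),
\]
where $F_{q,\underline{n}}$ has integer coefficients. Substituting the formulas from Theorem~\ref{theo:wilc} into this identity transforms it into a rational expression purely in the initial coefficients of $y_0$.

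By Remark~\ref{rem:omega_0}, $\omega_0$ is linear in the $a_{\underline{i},j}$'s with coefficients of degree $\leq d_y-1$ in the $c_{\underline{m}}$'s; after substitution, $\omega_0=\Omega^{(\lambda)}$ has degree at most $\bigl(\tfrac{1}{2}d_y(d_y+1)\binom{d_x+r}{r}-1\bigr)+(d_y-1)=M_1$. To secure a uniform denominator, bound $\mu_{\underline{n}}\leq\lambda_1|\underline{n}|$ via Remark~\ref{rem:FS-contraint}(1) applied to the strongly reduced Henselian equation $y={}_{\underline{k}}Q$ from Theorem~\ref{theo:FS}. The exponents $\iota_{0,i}$ of this equation are at most the components of $\underline{i}_{\underline{k}}$, themselves bounded by $\deg_{x_i}P_{\underline{k}_0}\leq d_x+d_y(|\underline{k}_0|+1)$. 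Since $|\underline{k}_0|\leq 2d_xd_y$ by Lemma~\ref{lemme:partie-princ}, we get $1+\iota_{0,i}\leq d_y(2d_xd_y+1)+d_x+1$ and hence $\lambda_1\leq M_2$. Multiplying by $\Omega^{(\lambda)\,M_2|\underline{n}|-\mu_{\underline{n}}}$ and clearing all denominators yields
\[
c_{\underline{p}}\ =\ \frac{\Psi^{(\lambda)}_{\underline{n}}(c_{(0,\ldots,0,1)},\ldots,c_{S(\underline{k})})}{\Omega^{(\lambda)}(c_{(0,\ldots,0,1)},\ldots,c_{\underline{k}})^{M_2|\underline{n}|}}.
\]

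For the degree of $\Psi^{(\lambda)}_{\underline{n}}$, each summand at index $q\leq\mu_{\underline{n}}\leq M_2|\underline{n}|$ contributes a factor $\Omega^{(\lambda)\,M_2|\underline{n}|-q}$ of degree $(M_2|\underline{n}|-q)M_1$, a factor $\underline{A}^{\underline{S}}$ of degree at most $q(M_1+d_x)$ in the $c_{\underline{m}}$'s after substitution (the $+d_x$ coming from the $\mathcal{G}$-part of the bound in Theorem~\ref{theo:wilc}), and a factor $\underline{C}^{\underline{T}_{\underline{S}}}$ of degree $|\underline{T}_{\underline{S}}|\leq d_yq$; summing and bounding $q$ by $M_2|\underline{n}|$ yields the announced bound $M_2|\underline{n}|(M_1+d_x)+1$. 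Integrality of the coefficients follows from Remark~\ref{rem:coeff-e_T} together with the $\mathbb{Z}$-integrality of the $a_{\underline{i},j}^{(\lambda)}$ from Theorem~\ref{theo:wilc}. The principal obstacles are, first, the careful combinatorial bookkeeping needed to consolidate the several $M_1,d_x,d_y$-contributions into the stated degree bound for $\Psi^{(\lambda)}_{\underline{n}}$, and, second, the verification that $\Omega^{(\lambda)}$ indeed depends only on the $c_{\underline{m}}$'s with $\underline{m}\leq_{\mathrm{grlex}}\underline{k}$---which reduces, via the explicit expression of $\omega_0$ in Remark~\ref{rem:omega_0}, to checking $S(\underline{k}_0)\leq_{\mathrm{grlex}}\underline{k}$ for the specific choice $\underline{k}=(0,\ldots,0,1,2d_xd_y)$, using $|\underline{k}_0|\leq 2d_xd_y$ from Lemma~\ref{lemme:partie-princ}.
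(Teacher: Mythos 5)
Your overall strategy is the paper's: feed the universal coefficients $a_{\underline{i},j}^{(\lambda)}$ from Theorem~\ref{theo:wilc} into the closed-form expression of Corollary~\ref{coro:FS}, bound $\mu_{\underline{n}}$ by $M_2|\underline{n}|$ via Remark~\ref{rem:FS-contraint}, and clear denominators to the uniform power $\Omega^{(\lambda)\,M_2|\underline{n}|}$. There is, however, one genuine missing step. Corollary~\ref{coro:FS} (through Theorem~\ref{theo:FS} and Notation~\ref{nota:omega_0}) applies only when $y_0$ is a \emph{simple} root of the vanishing polynomial: otherwise $\underline{k}_0$ and $\omega_0$ are not even defined, and the strongly reduced Henselian equation $y={}_{\underline{k}}Q$ does not exist. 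Theorem~\ref{theo:wilc} only guarantees that $P^{(\lambda)}(\underline{x},y_0)=0$; the minimal polynomial of $y_0$ may divide $P^{(\lambda)}$ with multiplicity $>1$. The paper repairs this by enlarging $\Lambda$ with indices corresponding to the derivatives $\partial^k P^{(\lambda)}/\partial y^k$, $k=1,\ldots,d_y-1$ (whose coefficients are still integer polynomials of the same degrees, and of which some has $y_0$ as a simple root). Your proposal applies Corollary~\ref{coro:FS} directly to $P^{(\lambda)}$ and therefore does not cover the multiple-root case.

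Secondly, the degree bookkeeping for $\Psi^{(\lambda)}_{\underline{n}}$, as you set it up, does not reach the stated bound: with $\deg\underline{A}^{\underline{S}}\leq q(M_1+d_x)$ and $|\underline{T}_{\underline{S}}|\leq qd_y$ you get $(M_2|\underline{n}|-q)M_1+q(M_1+d_x)+qd_y=M_2|\underline{n}|M_1+q(d_x+d_y)$, i.e.\ up to $M_2|\underline{n}|(M_1+d_x+d_y)$, which overshoots $M_2|\underline{n}|(M_1+d_x)+1$. The correct accounting uses the sharper estimates actually provided by Theorem~\ref{theo:wilc}, namely $\deg a_{\underline{i},j}^{(\lambda)}\leq M_1+1-d_y$ for $j\geq 1$ and $\deg a_{\underline{i},0}^{(\lambda)}\leq M_1+1-d_y+d_x$, together with the exact identity $|\underline{T}_{\underline{S}}|=\|\underline{S}\|-q+1\leq qd_y-q+1$; then the $q$-dependent terms telescope to $qd_x+1\leq M_2|\underline{n}|d_x+1$ and the bound falls out. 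Your remaining points (the bound $\deg\Omega^{(\lambda)}\leq M_1$ via Remark~\ref{rem:omega_0}, the estimate $\iota_{0,s}\leq d_y(2d_xd_y+1)+d_x$ hence $\lambda_1\leq M_2$, and the check that $\Omega^{(\lambda)}$ only involves $C_{\underline{m}}$ with $\underline{m}\leq_{\mathrm{grlex}}\underline{k}$) are correct and match the paper, even though your bound on $|\underline{i}_{\underline{k}}|$ should be phrased in terms of $|S(\underline{k})|=2d_xd_y+1$ rather than $|\underline{k}_0|+1$ (the two happen to coincide numerically).
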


\begin{proof}
Let $y_0=\displaystyle\sum_{\underline{p}>_{\mathrm{grlex}} \underline{0}}c_{\underline{p}}\underline{x }^{\underline{p}}$\,, $c_{(0,\ldots,0,1)}\neq 0$, be algebraic with vanishing polynomial of degrees bounded by $d_x$ in $\underline{x}$ and $d_y$ in $y$. According to Theorem \ref{theo:wilc}, for $N:=2d_xd_y$, there is a finite set $\Lambda$ and for every $\lambda\in\Lambda$, there are polynomials $a^{(\lambda)}_{i,j}(C_{(0,\ldots,0,1)},\ldots,C_{(N,0,\ldots,0)})\in \mathbb{Z}[C_{(0,\ldots,0,1)},\ldots,C_{(N,0,\ldots,0)}]$ such that:
$$P^{(\lambda)}=\displaystyle\sum_{|\underline{i}|\leq d_x,j\leq d_y}a_{\underline{i},j}^{(\lambda)}(C_{(0,\ldots,0,1)},\ldots,C_{(N,0,\ldots,0)})\underline{x}^{\underline{i}}y^j$$ 
is a vanishing polynomial for $y_0$ for a certain $\lambda\in\Lambda$. Enlarging the finite set $\Lambda$ by indices corresponding to the various $\displaystyle\frac{\partial^k P^{(\lambda)}}{\partial y^k}$, $k=1,\ldots,d_y-1$, we can assume that there is $\lambda$ such that $y_0$ is a simple root of $P^{(\lambda)}$. So the coefficients of $y_0$ can be computed as in  Corollary \ref{coro:FS}. More precisely, for any $\underline{n}\in\mathbb{Z}^r$,  $\underline{n}:=\underline{p}-S(\underline{k})$ for $\underline{p}\in\mathbb{N}^r$, $\underline{p}>_{\mathrm{grlex}}S(\underline{k})$:
\begin{equation}\label{equ:deg-coeff} c_{\underline{p}}=c_{S(\underline{k})+\underline{n}}= \displaystyle\sum_{q=1}^{\mu_{\underline{n}}} \displaystyle\sum_{\underline{S}\in I_q} \displaystyle\sum_{\underline{T}_{\underline{S}}\in J_{\underline{S}}}\displaystyle\frac{m_{\underline{S },\underline{T}_{\underline{S}}}}{\omega_0^{\mu_{\underline{n}}}} \omega_0^{\mu_{\underline{n}}-q} \underline{A}^{\underline{S}} \underline{C}^{\underline{T}_{\underline{S}}}
\end{equation}
where  $\mu_{\underline{n}}$ is as in Theorem \ref{theo:formule-FS} for the equation $ y=\, _{\underline{k}}Q^{(\lambda)}(\underline{x},y)$ of Theorem \ref{theo:FS}, $I_q=\left\{(s_{\underline{i},j})\ |\ |\underline{S}|=q,\ \|\underline{S}\|\geq q-1\right\}$, $$J_{\underline{S}}=\left\{(t_{\underline{S},\underline{i}})\ |\ |\underline{T}_{\underline{S}}|=\|\underline{S}\|-q+1,\ G(\underline{T}_{\underline{S}})=\underline{n}+q\,\underline{i}_{\underline{k}}-(q-1)S(\underline{k})-G(\underline{S})\right\}$$ and $m_{\underline{S},\underline{T}_{\underline{S}}}\in \mathbb{Z}$. Note that $\underline{C}=(c_{(0,\ldots,0,1)},\ldots,c_{S(\underline{k})})$ and $\underline{A}=\left(a_{i,j}^{(\lambda)}(c_{(0,\ldots,0,1)},\ldots,c_{(N,0,\ldots,0)})\right)$. Let us show that  $\mu_{\underline{n}}\leq M_2|\underline{n}|$. First, note that $\underline{i}_{\underline{k}}$ in Notation \ref{nota:P_k} verifies $|\underline{i}_{\underline{k}}|\leq \left(2d_xd_y+1\right)d_y+d_x$. Indeed: 
$$\begin{array}{lcl}
\underline{i}_{\underline{k}}&:=&w\left(P^{(\lambda)}_{\underline{k}}(\underline{x},y)\right)\\
&=& w\left(\displaystyle\sum_{|\underline{i}|\leq d_x,j\leq d_y} a_{\underline{i},j}\underline{x}^{\underline{i}} (z_{\underline{k}}+\underline{x}^{S(\underline{k})} )^j\right),
\end{array}
$$
so $|\underline{i}_{\underline{k}}|\leq d_x+d_y|S(\underline{k})|$ with $|S(\underline{k})|=2d_xd_y+1$. This implies that $\underline{\iota}_0$ of Theorem \ref{theo:formule-FS} for the equation
$ y=\, _{\underline{k}}Q^{(\lambda)}(\underline{x},y)$ verifies $\iota_{0,s}\leq \left(2d_xd_y+1\right)d_y+d_x$ for $s=1,\ldots,r$. With the notations of Theorem \ref{theo:FS}, we get that $\lambda_1\leq 2\left((2d_xd_y+1)d_y+d_x+1\right)^{r-1}=:M_2$. Finally, by Remark 
\ref{rem:FS-contraint}, we obtain that $\mu_{\underline{n}}\leq M_2|\underline{n}|$.\\
Let us rewrite (\ref{equ:deg-coeff}) as:
$$ c_{\underline{p}}=c_{S(\underline{k})+\underline{n}}= \displaystyle\sum_{q=1}^{\mu_{\underline{n}}} \displaystyle\sum_{\underline{S}\in I_q} \displaystyle\sum_{\underline{T}_{\underline{S}}\in J_{\underline{S}}}\displaystyle\frac{m_{\underline{S},\underline{T}_{\underline{S}}}}{\omega_0^{M_2|\underline{n}|}} \omega_0^{M_2|\underline{n}|-q}\underline{A}^{\underline{S}} \underline{C}^{\underline{T}_{\underline{S}}}$$
It now suffices to bound the degrees of the numerator and denominator in the terms of the previous formula. By  Theorem \ref{theo:wilc}, $\deg a_{i,j}^{(\lambda)}\leq M_1 +1-d_y$ for $j\geq 1$ and $\deg a_{i,0}^{(\lambda)}\leq M_1+1-d_y+d_x$. So by Remark \ref{rem:omega_0} and Theorem \ref{theo:wilc}, we deduce that $\omega_0$ is the evaluation of a polynomial $\Omega^{(\lambda)}(C_{(0,\ldots,0,1)},\ldots,C_{\underline{k}})$ such that  $\deg \Omega^{(\lambda)}\leq M_1$. The degree $d_{q,\underline{S}}$ corresponding to a term $\omega_0^{M_2|\underline{n}|-q}\underline{A}^{\underline{S}} \underline{C}^{\underline{T}_{\underline{S}}}$ is bounded by:
$$(M_2|\underline{n}|-q)M_1+|\underline{S}|(M_1+d_x+1-d_y)+|\underline{T}_{\underline{S}}|=(M_2|\underline{n}|-q)M_1+q(M_1+d_x+1-d_y)+\|\underline{S}\|-q+1.$$
But, $\|\underline{S}\|\leq q\,d_y$ and $1\leq q\leq \mu_{\underline{n}}\leq M_2|\underline{n}|$. So we get that:
$$d_{q,\underline{S}}\leq (M_2|\underline{n}|-q)M_1+q(M_1+d_x+1-d_y)+q\,d_y-q+1\leq M_2|\underline{n}|(M_1+d_x)+1.$$
\end{proof}

\begin{ex}
We resume Example \ref{ex:wilc} for which $2d_xd_y+1=9$, hence $\underline{k}=(1,8)$.  By Corollary \ref{coro:param-ratio}, the coefficients $c_{\underline{p}}$ for $\underline{p}>_{\mathrm{grlex}}S(\underline{k})=(2,7)$ are expressed as values of rational fractions in the first $c_{0,1},\ldots,c_{2,7}$ coefficients. In fact, such rational parametrizations might occur at a lower rank, i.e. in terms of fewer coefficients. E.g., by substituting the results in Example \ref{ex:wilcz2} into the ones in Example \ref{ex:FS}  under Conditions (\ref{equ:exemple}), we obtain the following relations for the first not trivially zero terms. 
$$\begin{array}{lcl}
c_{0,2}&=&\displaystyle\frac{ -a_{0,2,1} }{2\,a_{0,0,2}}=c_{0,2};\\
c_{0,3}&=&-{\displaystyle\frac {a_{{0,2,2}}{c_{0,1}}^{2}}{2\,a_{0,0,2}\,c_{0,1}}}+{\displaystyle\frac {{a_{{0,2,1}}}^{2}}{8\,{a_{0,0,2}}^2\,c_{0,1}}} \\
&=&\displaystyle\frac{1}{2}\,{\displaystyle\frac {2\,{c_{0,1}}c_{{0,3}}-{c_{{0,2}}}^{2}}{c_{0,1}}}+\displaystyle\frac{1}{2}\,{\frac {{c_{{0,2}}}^{2}}{c_{0,1}}}={c_{0,3}};\\
c_{2,1}&=&-{\displaystyle\frac {a_{{2,2,0}} }{2\,a_{0,0,2}\,c_{0,1}}}=c_{{2,1}}
;\\
c_{0,4}&=& \displaystyle\frac{1}{2}\,{\displaystyle\frac {a_{{0,2,1}}a_{{0,2,2}}}{{a_{{0,0,2}}}^{2}}}\\
&=& {\displaystyle\frac {c_{{0,2}} \left( 2\,c_{0,1}\,c_{{0,3}}-{c_{{0,2}}}^{2} \right)}{{c_{0,1}}^{2}}};\\
&\vdots&\end{array}$$
A relation of type $c_{i_1,i_2}=c_{i_1,i_2}$ simply means that $c_{i_1,i_2}$ is a free parameter. Recall that these formulas actually give rational parametrizations for the coefficients of Puiseux series solutions of equations as in Example \ref{ex:eclt} i.e. equations that are of degrees bounded by $1$ in $\underline{x}$ and by $2$ in $y$.
\end{ex}

As an immediate consequence of the previous result, we obtain a proof of the multivariate version  of Eisenstein theorem on algebraic power series due to K. V. Safonov \cite[Theorem 5]{safonov:algebraic-power-series}:

\begin{coro}\label{coro:eisenstein}
Let $y_0=\displaystyle\sum_{\underline{n}\in\mathbb{N}^r}c_{\underline{n}}\underline{x}^{\underline{n}}\ \in  \mathbb{Q}\left[\left[\underline{x}\right]\right]$ be algebraic over $\mathbb{Q}(\underline{x})$. Then there exists $\delta_0,\delta\in\mathbb{N}^*$ such that for every $\underline{n}\in\mathbb{N}^r$: $$\delta_0\cdot\delta^{|\underline{n}|}\cdot c_{\underline{n}}\in\mathbb{Z}.$$
\end{coro}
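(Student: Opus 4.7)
The plan is to apply Corollary \ref{coro:param-ratio} to $y_0$ after a harmless normalization and then clear denominators. The crucial ingredient is that Corollary \ref{coro:param-ratio} furnishes a closed-form formula whose numerator degree and denominator exponent are both \emph{linear} in $|\underline{n}| = |\underline{p}|-|S(\underline{k})|$; this linearity is precisely what packages the cleared denominators as the geometric factor $\delta^{|\underline{p}|}$ required by the statement.

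\emph{Reduction.} If $y_0 \in \mathbb{Q}[\underline{x}]$ is a polynomial, it has finitely many nonzero coefficients and one takes $\delta = 1$ with $\delta_0$ any common denominator. Otherwise, I would replace $y_0$ by $\tilde{y}_0 := y_0 - y_0(\underline{0}) + \lambda x_r$ for any $\lambda \in \mathbb{Q} \setminus \{-c_{(0,\ldots,0,1)}\}$; the new series is algebraic over $\mathbb{Q}(\underline{x})$, not a polynomial, has zero constant term, and has nonzero coefficient of $x_r$, so it fits the hypotheses of Corollary \ref{coro:param-ratio}. Since $\tilde{y}_0$ and $y_0$ agree except at the indices $\underline{0}$ and $(0,\ldots,0,1)$, an Eisenstein bound for $\tilde{y}_0$ yields one for $y_0$ after enlarging $\delta_0$ by the denominators of these two exceptional coefficients, so one may assume $y_0$ itself already fits that framework.

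\emph{Main step.} Fix degree bounds $d_x, d_y$ for a vanishing polynomial of $y_0$, let $\lambda_0 \in \Lambda$ be the index provided by Corollary \ref{coro:param-ratio}, and let $B \in \mathbb{N}^*$ be a common denominator of all $c_{\underline{i}}$ with $\underline{i} \leq_{\mathrm{grlex}} \underline{k}$. Since evaluating an integer polynomial of degree $d$ at rationals having common denominator $B$ lands in $B^{-d}\mathbb{Z}$, the degree bounds from the corollary give $\tilde{\omega} := B^{M_1} \Omega^{(\lambda_0)}(c_{(0,\ldots,0,1)},\ldots,c_{\underline{k}}) \in \mathbb{Z} \setminus \{0\}$ and $B^{M_2|\underline{n}|(M_1+d_x)+1}\Psi^{(\lambda_0)}_{\underline{n}}(c_{(0,\ldots,0,1)},\ldots,c_{S(\underline{k})}) \in \mathbb{Z}$. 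Multiplying numerator and denominator of the formula
\[
c_{\underline{p}} \;=\; \frac{\Psi^{(\lambda_0)}_{\underline{n}}(c_{(0,\ldots,0,1)},\ldots,c_{S(\underline{k})})}{\Omega^{(\lambda_0)}(c_{(0,\ldots,0,1)},\ldots,c_{\underline{k}})^{M_2|\underline{n}|}}
\]
by $B^{M_1 M_2 |\underline{n}|}$ yields the identity $B\,(B^{d_x}\tilde{\omega})^{M_2|\underline{n}|}\,c_{\underline{p}} \in \mathbb{Z}$ for every $\underline{p} >_{\mathrm{grlex}} S(\underline{k})$. Setting $\delta := (B^{d_x}|\tilde{\omega}|)^{M_2}$ and $\delta_0 := BE$, where $E$ is a common denominator of the finitely many $c_{\underline{p}}$ with $\underline{p} \leq_{\mathrm{grlex}} S(\underline{k})$, one verifies $\delta_0 \delta^{|\underline{p}|} c_{\underline{p}} \in \mathbb{Z}$ in both regimes: for large $\underline{p}$ by writing $\delta_0 \delta^{|\underline{p}|} c_{\underline{p}} = E\,\delta^{|S(\underline{k})|}\cdot B\,\delta^{|\underline{n}|}c_{\underline{p}}$, and for small $\underline{p}$ by construction of $E$.

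The only substantive difficulty is the linearity in $|\underline{n}|$ of the denominator exponent $M_2|\underline{n}|$ and of $\deg \Psi^{(\lambda_0)}_{\underline{n}}$, which is exactly what Corollary \ref{coro:param-ratio} provides; any superlinear dependence would prevent packaging the cleared denominator as a single factor $\delta^{|\underline{p}|}$, and the Eisenstein conclusion would fail. Everything else is formal bookkeeping of rational denominators.
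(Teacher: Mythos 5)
Your proposal is correct and follows essentially the same route as the paper: apply Corollary \ref{coro:param-ratio}, let $\delta$ come from $|\Omega^{(\lambda)}|^{M_2}$ (adjusted by the common denominator of the initial coefficients), and absorb the finitely many initial terms into $\delta_0$; the paper simply rescales $y_0$ first so that $c_{(0,\ldots,0,1)},\ldots,c_{\underline{k}}$ are integers instead of tracking powers of $B$ through the degree bounds. The only nitpick is that $B$ should be a common denominator of the $c_{\underline{i}}$ for $\underline{i}\leq_{\mathrm{grlex}} S(\underline{k})$ (not just $\underline{k}$), since $\Psi^{(\lambda)}_{\underline{n}}$ is evaluated up to $c_{S(\underline{k})}$.
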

\begin{proof}

Without loss of generality, we may assume that $c_{\underline{0}}=0$ and $c_{(0,\ldots,0,1)}\neq 0$ as in Section \ref{section:preliminaries}. Consider $d_x$ and $d_y$ such that $y_0$ is a root of a polynomial of degrees bounded by $d_x$ and $d_y$. For $\underline{k}:=(0,\ldots,0,1,2d_xd_y)$ as in Corollary \ref{coro:param-ratio}, by multiplication of the coefficients $c_{\underline{n}}$ by a suitable $\delta_0\in\mathbb{N}^*$, one can assume that $c_{(0,\ldots,0,1)},\ldots,c_{\underline{k}}\in\mathbb{Z}$. Now, take $\lambda\in \Lambda$ as in Corollary \ref{coro:param-ratio} and $\delta$ the absolute value of ${\Omega^{(\lambda)}(c_{(0,\ldots,0,1)},\ldots,c_{\underline{k}})^{M_2}}$.
\end{proof}

\section{Appendix}\label{section:appendice}
For families of multi-indices $\mathcal{F}=\left\{(0,2,1),(2,2,1),(0,0,2),(0,2,2),(2,2,2)\right\}$ \textrm{ and } $\mathcal{G}=\left\{(0,2,0),(2,2,0)\right\}$ as in Example \ref{ex:wilc} (see also  Definition \ref{defi:alg-relative} and Examples \ref{ex:eclt} and \ref{ex:wilcz2}), the corresponding Wilczynski matrix is:\\
 
$ M :=\\
\left[\begin{array}{ccccccc}
0 & 0 & 0 & 0& 0 & 0 & 0 \\
0 & 0 & 0 & 0& 0 & 0 & 0 \\
1 &0 & 0 & 0 & {\mathop{\rm c}_{0,1}}^{2} & 0&0 \\
0 &0 & 0 & 0 & 2~\mathop{\rm c}_{0,1}~\mathop{\rm c}_{1,0} & 0&0 \\
 0 & 0 & 0 & 0& {\mathop{\rm c}_{1,0}}^{2}&0 & 0 \\
0 &0  & \mathop{\rm c}_{0,1}&0&2\,c_{{0,1}}c_{{0,2}}&0&0\\
0 &0 &\mathop{\rm c}_{1,0}&0 &  2\,c_{{0,1}}c_{{1,1}}+2\,c_{{1,0}}c_{{0,2}}  & 0&0 \\
0&0 &0 & 0&   2\,c_{{0,1}}c_{{2,0}}+2\,c_{{1,0}}c_{{1,1}}  & 0&0 \\
0&0 &0 &0 &2\,c_{{1,0}}c_{{2,0}}&0&0 \\ 
0&0 &c_{{0,2}}&0&2\,c_{{0,1}}c_{{0,3}}+{c_{{0,2}}}^{2}& \mathop{\rm c}_{0,1}^{2} &0\\
0&0 &c_{{1,1}}&0&\begin{array}{c}
2\,c_{{0,1}}c_{{1,2}}+2\,c_{{0,2}}c_{{1,1}}\\
+2\,c_{{1,0}}c_{{0,3}}
\end{array} & 2~\mathop{\rm c}_{0,1}~\mathop{\rm c}_{1,0} &0\\
0&1 &c_{{2,0}}&0& \begin{array}{c}
2\,c_{{1,0}}c_{{1,2}}+{c_{{1,1}}}^{2}\\
+2\,c_{{0,2}}c_{{2,0}}+2\,c_{{0,1}}c_{{2,1}} 
\end{array}   &{\mathop{\rm c}_{1,0}}^{2}& 0\\
0&0 &0 & 0 & \begin{array}{c}
 2\,c_{{0,1}}c_{{3,0}}+2\,c_{{1,1}}c_{{2,0}}\\
 +2\,c_{{1,0}}c_{{2,1}}
\end{array} &0&0\\ 
0&0 &0 &0 &{c_{{2,0}}}^{2}+2\,c_{{1,0}}c_{{3,0}}&0&0\\ 
0&0 &c_{{0,3}}&0& c_{{0,5}}^{(2)}& 2\,c_{{0,1}}c_{{0,2}}& 0\\
0&0 &c_{{1,2}}&0& c_{{1,4}}^{(2)}&  2\,c_{{0,1}}c_{{1,1}}+2\,c_{{1,0}}c_{{0,2}} & 0\\
0&0 &c_{{2,1}}&\mathop{\rm c}_{0,1}&c_{{2,3}}^{(2)}&  2\,c_{{0,1}}c_{{2,0}}+2\,c_{{1,0}}c_{{1,1}}& 0\\
0&0 & c_{{3,0}} &\mathop{\rm c}_{1,0}&c_{{3,2}}^{(2)}&2\,c_{{0,1}}c_{{0,2}}&  0\\
0&0 &0&0& c_{{4,1}}^{(2)}& 0&  0\\
0&0 &0&0& c_{{5,0}}^{(2)}& 0&  0\\
0&0 &c_{{0,4}}&0& c_{{0,6}}^{(2)}&  2\,c_{{0,1}}c_{{0,3}}+{c_{{0,2}}}^{2}& 0\\
0&0 &c_{{1,3}}&0& c_{{1,5}}^{(2)}& \begin{array}{c}
 2\,c_{{0,1}}c_{{1,2}}+2\,c_{{0,2}}c_{{1,1}}\\
 +2\,c_{{1,0}}c_{{0,3}}
\end{array}&  0\\
0&0 &c_{{2,2}}&c_{{0,2}}& c_{{2,4}}^{(2)}&  \begin{array}{c}
2\,c_{{1,0}}c_{{1,2}}+{c_{{1,1}}}^{2}\\
+2\,c_{{0,2}}c_{{2,0}}+2\,c_{{0,1}}c_{{2,1}} 
\end{array}&  {\mathop{\rm c}_{0,1}}^{2} \\
0&0 &c_{{3,1}}&\mathop{\rm c}_{1,1} & c_{{3,3}}^{(2)}&  \begin{array}{c}
 2\,c_{{0,1}}c_{{3,0}} +2\,c_{{1,1}}c_{{2,0}}\\
 +2\,c_{{1,0}}c_{{2,1}}
\end{array}& 2 \mathop{\rm c}_{0,1}\mathop{\rm c}_{1,0} \\
0&0 &c_{{4,0}}&\mathop{\rm c}_{2,0}&c_{{4,2}}^{(2)}&  {c_{{2,0}}}^{2}+2\,c_{{1,0}}c_{{3,0}}&  {\mathop{\rm c}_{1,0}}^{2} \\
\vdots & \vdots & \vdots&\vdots & \vdots & \vdots & \vdots \\
\end{array}\right],\\ $

and the reduced matrix is:\\

$ M ^{{\it red} } :=
\left[\begin{array}{ccccc}
 0 & 0& 0 & 0 & 0 \\
 0 & 0& 0 & 0 & 0 \\
 0 & 0 & 2~\mathop{\rm c}_{0,1}~\mathop{\rm c}_{1,0} & 0&0 \\
 0 & 0& {\mathop{\rm c}_{1,0}}^{2}&0 & 0 \\
 \mathop{\rm c}_{0,1}&0&2\,c_{{0,1}}c_{{0,2}}&0&0\\
\mathop{\rm c}_{1,0}&0 &  2\,c_{{0,1}}c_{{1,1}}+2\,c_{{1,0}}c_{{0,2}}  & 0&0 \\
0 & 0&   2\,c_{{0,1}}c_{{2,0}}+2\,c_{{1,0}}c_{{1,1}}  & 0&0 \\
0 &0 &2\,c_{{1,0}}c_{{2,0}}&0&0 \\ 
c_{{0,2}}&0&2\,c_{{0,1}}c_{{0,3}}+{c_{{0,2}}}^{2}& {\mathop{\rm c}_{0,1}}^{2} &0\\
c_{{1,1}}&0& \begin{array}{c}
 2\,c_{{0,1}}c_{{1,2}}+2\,c_{{0,2}}c_{{1,1}}\\
 +2\,c_{{1,0}}c_{{0,3}}
\end{array} & 2~\mathop{\rm c}_{0,1}~\mathop{\rm c}_{1,0} &0\\
0 & 0 &  \begin{array}{c}
2\,c_{{0,1}}c_{{3,0}}+2\,c_{{1,1}}c_{{2,0}}\\
+2\,c_{{1,0}}c_{{2,1}}
\end{array} &0&0\\ 
0 &0 &{c_{{2,0}}}^{2}+2\,c_{{1,0}}c_{{3,0}}&0&0\\ 
c_{{0,3}}&0& c_{{0,5}}^{(2)}& 2\,c_{{0,1}}c_{{0,2}}& 0\\
c_{{1,2}}&0& c_{{1,4}}^{(2)}&  2\,c_{{0,1}}c_{{1,1}}+2\,c_{{1,0}}c_{{0,2}} & 0\\
c_{{2,1}}&\mathop{\rm c}_{0,1}&c_{{2,3}}^{(2)}&  2\,c_{{0,1}}c_{{2,0}}+2\,c_{{1,0}}c_{{1,1}}& 0\\
 c_{{3,0}} &\mathop{\rm c}_{1,0}&c_{{3,2}}^{(2)}&2\,c_{{0,1}}c_{{0,2}}  &  0\\
0&0& c_{{4,1}}^{(2)}& 0&  0\\
0&0& c_{{5,0}}^{(2)}& 0&  0\\
c_{{0,4}}&0& c_{{0,6}}^{(2)}&  2\,c_{{0,1}}c_{{0,3}}+{c_{{0,2}}}^{2}& 0\\
c_{{1,3}}&0& c_{{1,5}}^{(2)}& \begin{array}{c}
2\,c_{{0,1}}c_{{1,2}}+2\,c_{{0,2}}c_{{1,1}} \\
+2\,c_{{1,0}}c_{{0,3}}
\end{array} &  0\\
c_{{2,2}}&c_{{0,2}}& c_{{2,4}}^{(2)}&  \begin{array}{c}
 2\,c_{{1,0}}c_{{1,2}}+{c_{{1,1}}}^{2}\\ +2\,c_{{0,2}}c_{{2,0}}+2\,c_{{0,1}}c_{{2,1}}
\end{array}&  {\mathop{\rm c}_{0,1}}^{2} \\
c_{{3,1}}&\mathop{\rm c}_{1,1} &c_{{3,3}}^{(2)}& \begin{array}{c}
 2\,c_{{0,1}}c_{{3,0}}+2\,c_{{1,1}}c_{{2,0}}\\
 +2\,c_{{1,0}}c_{{2,1}}
\end{array} & 2 \mathop{\rm c}_{0,1}\mathop{\rm c}_{1,0} \\
c_{{4,0}}&\mathop{\rm c}_{2,0}&c_{{4,2}}^{(2)}&  {c_{{2,0}}}^{2}+2\,c_{{1,0}}c_{{3,0}}&  {\mathop{\rm c}_{1,0}}^{2} \\
 \vdots&\vdots & \vdots & \vdots & \vdots \\
\end{array}\right].$




\bibliographystyle{amsalpha}

\begin{thebibliography}{EKM{\etalchar{+}}01}

\bibitem[AI09]{aroca-ilardi:puiseux-multivar}
F.~Aroca and G.~Ilardi, \emph{A family of algebraically closed fields
  containing polynomials in several variables}, Comm. Algebra \textbf{37}
  (2009), no.~4, 1284--1296. \MR{2510985 (2010f:12008)}

\bibitem[EKM{\etalchar{+}}01]{evans-al:tot-ord-commut-monoids}
K.~Evans, M.~Konikoff, J.~J. Madden, R.~Mathis, and G.~Whipple, \emph{Totally
  ordered commutative monoids}, Semigroup Forum \textbf{62} (2001), no.~2,
  249--278.

\bibitem[EP05]{engler-prestel:valued-fields}
A.~J. Engler and A.~Prestel, \emph{Valued fields}, Springer Monographs in
  Mathematics, Springer-Verlag, Berlin, 2005.

\bibitem[FS97]{flajolet-soria:coeff-alg-series}
P.~Flajolet and M.~Soria, \emph{Coefficients of algebraic series}, Algorithms
  seminar 1997-1998, Tech. Report, INRIA, 1997.

\bibitem[GP00]{gonzalez-perez_singul-quasi-ord}
P.~D. Gonz{\'a}lez~P{\'e}rez, \emph{Singularit\'es quasi-ordinaires toriques et
  poly\`edre de {N}ewton du discriminant}, Canad. J. Math. \textbf{52} (2000),
  no.~2, 348--368.

\bibitem[Hah07]{hahn:nichtarchim}
H.~Hahn, \emph{{\"U}ber die nichtarchimedischen {G}r\"ossensystem},
  Sitzungsberichte der Kaiserlichen Akademie der Wissenschaften, Mathematisch -
  Naturwissenschaftliche Klasse (Wien) \textbf{116} (1907), no.~Abteilung IIa,
  601--655.

\bibitem[Hen64]{henrici:lagr-burmann}
P.~Henrici, \emph{An algebraic proof of the {L}agrange-{B}\"urmann formula}, J.
  Math. Anal. Appl. \textbf{8} (1964), 218--224.

\bibitem[HM15]{hickel-matu:puiseux-alg}
M.~Hickel and M.~Matusinski, \emph{On the algebraicity of puiseux series},
  preprint submitted for publication, http://arxiv.org/abs/1503.04965, 2015.

\bibitem[Leg30]{legendre:theorie-nbres}
A.-M. Legendre, \emph{Th\'eorie des nombres t.1}, Firmin-Didot (Paris), 1830.

\bibitem[McD95]{mcdonald_puiseux-multivar}
J.~McDonald, \emph{Fiber polytopes and fractional power series}, J. Pure Appl.
  Algebra \textbf{104} (1995), no.~2, 213--233.

\bibitem[Ray74]{rayner_puiseux-multivar}
F.~J. Rayner, \emph{Algebraically closed fields analogous to fields of
  {P}uiseux series}, J. London Math. Soc. (2) \textbf{8} (1974), 504--506.

\bibitem[Rib92]{rib:series-fields-alg-closed}
P.~Ribenboim, \emph{Fields: algebraically closed and others}, Manuscripta Math.
  \textbf{75} (1992), no.~2, 115--150.

\bibitem[RvdD84]{rib-vdd_ratio-funct-field}
P.~Ribenboim and L.~van~den Dries, \emph{The absolute {G}alois group of a
  rational function field in characteristic zero is a semidirect product},
  Canad. Math. Bull. \textbf{27} (1984), no.~3, 313--315.

\bibitem[Saf00]{safonov:algebraic-power-series}
K.~V. Safonov, \emph{On power series of algebraic and rational functions in
  {${\bf C}^n$}}, J. Math. Anal. Appl. \textbf{243} (2000), no.~2, 261--277.

\bibitem[Sat83]{sathaye:newt-puiseux-exp_abh-moh-semigr}
A.~Sathaye, \emph{Generalized {N}ewton-{P}uiseux expansion and
  {A}bhyankar-{M}oh semigroup theorem}, Inventiones Mathematicae \textbf{74}
  (1983), 149--157, 10.1007/BF01388535.

\bibitem[Sin80]{singmaster:binomial-multinomial}
D.~Singmaster, \emph{Divisibility of binomial and multinomial coefficients by
  primes and prime powers}, A collection of manuscripts related to the
  {F}ibonacci sequence, Fibonacci Assoc., Santa Clara, Calif., 1980,
  pp.~98--113.

\bibitem[Sok11]{sokal:implicit-function}
A.~D. Sokal, \emph{A ridiculously simple and explicit implicit function
  theorem}, S\'em. Lothar. Combin. \textbf{61A} (2009/11), Art. B61Ad, 21.

\bibitem[SV06]{soto-vicente:polyhedral-cones}
M.~J. Soto and J.~L. Vicente, \emph{Polyhedral cones and monomial blowing-ups},
  Linear Algebra Appl. \textbf{412} (2006), no.~2-3, 362--372.

\bibitem[SV11]{soto-vicente_puiseux-multivar}
\bysame, \emph{The {N}ewton procedure for several variables}, Linear Algebra
  Appl. \textbf{435} (2011), no.~2, 255--269. \MR{2782778}

\bibitem[Wal78]{walker_alg-curves}
R.~J. Walker, \emph{Algebraic curves}, Springer-Verlag, New York, 1978, Reprint
  of the 1950 edition.

\bibitem[{Wil}19]{wilczynski:alg-power-series}
E.~J. {Wilczynski}, \emph{{On the form of the power series for an algebraic
  function.}}, {Am. Math. Mon.} \textbf{26} (1919), 9--12 (English).

\end{thebibliography}

\newcommand{\etalchar}[1]{$^{#1}$}
\def\cprime{$'$} \def\cprime{$'$}
\providecommand{\bysame}{\leavevmode\hbox to3em{\hrulefill}\thinspace}
\providecommand{\MR}{\relax\ifhmode\unskip\space\fi MR }
\providecommand{\MRhref}[2]{%
  \href{http://www.ams.org/mathscinet-getitem?mr=#1}{#2}
}
\providecommand{\href}[2]{#2}


%
\end{document}